\documentclass[paper=a4, fontsize=11pt]{scrartcl}

\usepackage[T1]{fontenc}
\usepackage[utf8]{inputenc}
\usepackage[english]{babel}
\usepackage{babelbib}
\usepackage{amsmath,amsfonts,amsthm,amssymb}
\usepackage[left=2cm,right=2cm,top=2.85cm,bottom=2.85cm]{geometry}

\usepackage[Algorithm]{algorithm}
\usepackage{algorithmic}
\usepackage{graphicx}
\usepackage{setspace}

\DeclareMathOperator*{\argmin}{arg\,min}

\usepackage{xcolor}

\providecommand{\keywords}[1]{\textbf{\textit{Key words ---}} #1}
\providecommand{\MSC}[1]{\textbf{\textit{MSC ---}} #1}

\newcommand{\norm}[1]{\left\lVert#1\right\rVert}

\newcommand{\cl}[1]{\operatorname{cl}\!\left(#1\right)}

\newtheorem{theorem}{Theorem}[section]
\newtheorem{lemma}[theorem]{Lemma}
\newtheorem{korollar}[theorem]{Corollary}
\newtheorem{definition}[theorem]{Definition}

\newtheorem{assumption}{Assumption}

\numberwithin{equation}{section}

\usepackage{hyperref}
\bibliographystyle{babunsrt-fl}

\DeclareMathAlphabet{\mathcall}{OMS}{cmsy}{m}{n}

\newcommand{\R}{\mathbb{R}}
\newcommand{\N}{\mathbb{N}}

\newcommand{\Hk}{\mathcall{H}_k(\Omega)}
\newcommand{\Hkdual}{\mathcall{H}_k(\Omega)'}
\newcommand{\HkR}{\mathcall{H}_k\left(\mathbb{R}^N\right)}
\newcommand{\HkdualR}{\mathcall{H}_k\left(\mathbb{R}^N\right)'}
\newcommand{\normW}[2]{ \left\|#1\right\|_{#2}}
\newcommand{\inte}[4]{ \int_{#1}^{#2} \! \left [ #4 \right ] \text{d}#3}
\newcommand{\inteNo}[4]{ \int_{#1}^{#2} \!   #4  \text{d}#3}

\title{Recovery of the optimal control value function in reproducing kernel Hilbert spaces from verification conditions}

\begin{document}
\selectlanguage{english}
\title{Recovery of the optimal control value function in reproducing kernel Hilbert spaces from verification conditions}
  \author{Tobias Ehring \thanks{Institute of Applied Analysis and Numerical Simulation, University of Stuttgart, Pfaffenwaldring 57, Stuttgart 70569, Baden-W\"urttemberg, Germany. E-mail: ehringts@mathematik.uni-stuttgart.de} \and  Behzad Azmi\thanks{Department of Mathematics and Statistics, University of Konstanz, Universit\"atsstra\ss e 10, Konstanz 78457, Baden-W\"urttemberg, Germany. E-mail: behzad.azmi@uni-konstanz.de} \and Bernard Haasdonk\thanks{Institute of Applied Analysis and Numerical Simulation, University of Stuttgart, Pfaffenwaldring 57, Stuttgart 70569, Baden-W\"urttemberg, Germany. E-mail: haasdonk@mathematik.uni-stuttgart.de}}
  \date{\today}
  \maketitle
\begin{abstract}
\noindent Approximating the optimal value function \(v^*\) for infinite-horizon, nonlinear, autonomous optimal control problems is both challenging and essential for synthesizing real-time optimal feedback. We develop an abstract optimal recovery framework in reproducing kernel Hilbert spaces (RKHS) for reconstructing unknown target functions from mixed equality and inequality functional constraints.  Within this framework, the approximation of \(v^*\) is cast as a collocation-type problem derived from verification conditions for optimality -- most prominently, the Hamilton-Jacobi-Bellman (HJB) equation -- that uniquely characterizes \(v^*\). As the set of collocation points becomes dense in the ambient domain \(\Omega\), we establish convergence of the RKHS approximants to \(v^*\): globally on \(\Omega\) in the RKHS norm when \(v^*\) is analytic, and locally (in a neighborhood of the origin) in the RKHS norm when \(v^*\) is bounded from above and below by quadratic functions. Furthermore, we show that a practical numerical realization of the abstract scheme reduces to the classical policy iteration algorithm.  Numerical experiments support the effectiveness of the proposed approach.
   \end{abstract}
   
\noindent \keywords{Value function approximation, Reproducing kernel Hilbert spaces, Hamilton-Jacobi-Bellman equation, infinite-horizon optimal controls, Collocation methods}

\noindent \MSC{49L20,  46E22,  41A29, 49M05, 49N35}

 \section{Introduction}   
   Designing optimal feedback controllers is essential to ensure robust real-time regulation of dynamic systems despite uncertainties, disturbances, and modeling inaccuracies. These controllers adjust inputs in response to the system’s current state, steering its trajectory to minimize a specified cost. Here, we study an infinite-horizon optimal control problem (OCP) with quadratic control contribution in the running cost. The objective functional is given by
\[
J_{\infty}(x,\mathbf{u}):= \inte{0}{\infty}{t}{h\!\left(\mathbf{x}_{\mathbf{u}}(t;x)\right)+  \left \langle \mathbf{u}(t) , R \mathbf{u}(t)\right\rangle },
\]
with the convention \(J_{\infty}(x,\mathbf{u})=+\infty\) whenever the corresponding state trajectory exhibits finite-time blow-up. The state trajectories are generated by the control-affine system
\begin{gather}
\dot{\mathbf{x}}_{\mathbf{u}}(t;x)=f\!\left(\mathbf{x}_{\mathbf{u}}(t;x)\right)+g\!\left(\mathbf{x}_{\mathbf{u}}(t;x)\right)\mathbf{u}(t)\quad\text{and}\quad \mathbf{x}_{\mathbf{u}}(0;x)=x\in\mathbb{R}^N,\label{eq:ODEInfinit}
\end{gather}
where bold symbols denote time-dependent scalar or vector functions, thereby distinguishing them from time-independent quantities. The admissible input set is defined as
\[
\mathcal{U}_\infty:=L^\infty([0,\infty);\mathbb{R}^M)=\Bigl\{\mathbf{u}:[0,\infty)\to\mathbb{R}^M~\big|~\mathbf{u}\text{ measurable and essentially bounded}\Bigr\}.
\]
Then, the corresponding OCP reads
\begin{equation}\tag{OCP}\label{eq:MPInfinit}
v^*(x) := \inf_{\mathbf{u}\in\mathcal{U}_\infty} J_{\infty}(x,\mathbf{u}),
\end{equation}
where the function \(v^*(x)\) is called the optimal value function (OVF). It returns the minimal achievable cost-to-go starting from the initial condition  \(x\). For inputs \(\mathbf{u}\in\mathcal{U}_\infty\), the solutions of \eqref{eq:ODEInfinit} are understood in the Carathéodory sense \cite{filippov1988differential}; later, we restrict attention to continuous feedbacks induced by a continuously differentiable OVF, in which case trajectories are classical solutions of \eqref{eq:ODEInfinit}. Throughout the analysis, we impose the following conditions on the system data, which, in particular, ensure that $h(x) + \langle u, R u \rangle $ is nonnegative.
\begin{assumption}\label{as:data}
Let \(f\in C^1(\mathbb{R}^N,\mathbb{R}^N)\), \(g\in C^1(\mathbb{R}^N,\mathbb{R}^{N\times M})\), \(h\in C^1(\mathbb{R}^N,\mathbb{R})\), and let \(R\in\mathbb{R}^{M\times M}\) be symmetric positive definite (i.e., \(\left \langle u ,Ru \right\rangle   >0\) for all \(u\in\mathbb{R}^M\setminus\{0\}\)). Assume \(f(0)=0\) and that \(h\) is positive definite (i.e., \(h(x)>0\) for all \(x\neq 0\) and \(h(0)=0\)). 
\end{assumption}
\noindent For clarity, $\langle\cdot,\cdot\rangle$ denotes the Euclidean inner product and
$\|\cdot\|$ the corresponding Euclidean norm on  $\mathbb{R}^{N}$. Moreover, for any symmetric positive definite matrix
$A\in\mathbb{R}^{d\times d}$ and $x \in \mathbb{R}^d$, we write
$
  \normW{x}{A} \;:=\; \sqrt{\langle x,Ax\rangle}.
$ \\

\noindent Assumption~\ref{as:data} implies that \(f\) and \(g\) are locally Lipschitz in the state, so for every \(x\in\mathbb{R}^N\) and every \(\mathbf{u}\in\mathcal{U}_\infty\) the initial value problem \eqref{eq:ODEInfinit} admits a unique maximal Carathéodory solution on some interval \([0,T_{\max})\) with $T_{\max} \in (0,\infty]$. Moreover, if \(v^*(x)<\infty\), then any minimizing sequence \((\mathbf{u}_k)_{k\in\mathbb{N}}\subset\mathcal{U}_\infty\) with \(J_{\infty}(x,\mathbf{u}_k)\to v^*(x)\) may, without loss of generality, be taken to avoid finite-time escape, because such trajectories are assigned infinite cost by convention; consequently, every trajectory associated with an $\mathbf{u}_k$ is uniquely defined for all \(t\ge 0\).\\

\noindent By Bellman’s principle of optimality (see Proposition 2.5, Chapter 3 of \cite{bardi1997optimal}), the infinite-horizon problem \eqref{eq:MPInfinit} is equivalent, for each \(T>0\), to the family of finite-horizon problems
\begin{gather}
v^*(x)=\inf_{\mathbf{u}\in\mathcal{U}_{T}}\!\left\{J_T(x,\mathbf{u})+v^*\!\left(\mathbf{x}_{\mathbf{u}}(T;x)\right)\right\},\label{eq:MPFinitFirst}\\
\dot{\mathbf{x}}_{\mathbf{u}}(t;x)=f\!\left(\mathbf{x}_{\mathbf{u}}(t;x)\right)+g\!\left(\mathbf{x}_{\mathbf{u}}(t;x)\right)\mathbf{u}(t) \quad \text{and}\quad \mathbf{x}_{\mathbf{u}}(0;x)=x\in\mathbb{R}^N\label{eq:ODEFiniteFirst}
\end{gather}
with admissible inputs \(\mathcal{U}_T:=L^\infty\!\left([0,T];\mathbb{R}^M\right)\) and running cost
\[
J_T(x,\mathbf{u}):= \inte{0}{T}{t}{h\!\left(\mathbf{x}_{\mathbf{u}}(t;x)\right)+ \normW{\mathbf{u}(t)}{R}^2},
\]
where \(J_T(x,\mathbf{u})=+\infty\) whenever the associated trajectory exhibits finite-time escape. This formulation reflects the property that any segment of an optimal trajectory is itself optimal. Assuming that the OVF $v^*$ is continuously differentiable on a domain $\Omega \subset \mathbb{R}^N$ containing the origin, one can derive a partial differential equation (PDE) that $v^*$ fulfills.
Heuristically, by rearranging \eqref{eq:MPFinitFirst}--\eqref{eq:ODEFiniteFirst}, dividing by $T$ and letting $T$ go to $0$, it follows that $v^*$ satisfies the PDE
\begin{align}
 \min_{u \in \mathbb{R}^M}\left\lbrace \left\langle f(x)+g(x)u , \nabla v^*(x) \right\rangle + h(x) + \normW{u}{R}^2 \right\rbrace = 0  \label{eq:HBJ}
\end{align}
for all $x \in \Omega$ and $v^*(0) = 0$; for a rigorous derivation, see \cite[Chapter 3]{bardi1997optimal}. Equation \eqref{eq:HBJ} is the so-called Hamilton-Jacobi-Bellman (HJB) equation. Since the minimization in \eqref{eq:HBJ} is convex in $u$, the first-order optimality condition yields the explicit, unique minimizer
\begin{align} 
\mathcal{K}(x;\nabla v^*) := u^*(x) = -\frac{1}{2}R^{-1}g(x)^{\top} \nabla v^*(x). \label{eq:feedback}
\end{align}
Substituting \eqref{eq:feedback} into \eqref{eq:HBJ} gives the equivalent form
\begin{align}\label{eq:HBJ1}
 \left\langle f(x) , \nabla {v^*}(x) \right\rangle - \frac{1}{4} \normW{g(x)^{\top} \nabla v^*(x)}{R^{-1}}^2   +h(x) &= 0 \\
 v^*(0) &= 0. \label{eq:HBJ2}
\end{align}
Moreover, inserting the feedback control law \eqref{eq:feedback} as a controller in \eqref{eq:ODEInfinit} results in the optimal closed-loop system with the optimal control signal being a function of the state. Consequently, the problem of optimal feedback control is solved once the OVF is available.
However, computing the OVF directly from the HJB equation \eqref{eq:HBJ1} is challenging, as the HJB is a fully nonlinear first‑order PDE. Moreover, the equation can have multiple solutions. To isolate the solution that coincides with the OVF, we later impose additional conditions that ensure uniqueness. These are the so-called verification conditions for optimality. On the basis of these conditions, we formulate an infinite-dimensional collocation-type minimization problem in reproducing kernel Hilbert spaces (RKHS). We prove that this infinite-dimensional formulation is equivalent to a finite-dimensional constrained optimization problem, and that the corresponding approximants converge to the OVF as the set of collocation points becomes dense in $\Omega$.\\

\noindent In the sequel, we survey several classes of numerical schemes that have been proposed to approximate the OVF.\\
\textbf{Finite–difference discretizations.} The most classical approach consists of a finite–difference (FD) approximation of the HJB; see, for example, \cite{crandal1984two,abgrall1996numerical,shu2007high}. Under suitable monotonicity and consistency conditions, these schemes provably converge to the viscosity solution, a generalized notion of solution. Unfortunately, their computational complexity grows exponentially with the dimension of the state space, making FD methods impractical beyond two or three dimensions.\\
\textbf{Semi–Lagrangian schemes.} A second, widely used family of methods starts with a temporal discretization of the controlled ordinary differential equation (ODE) that underlies the OCP, leading to the so-called semi–Lagrangian (SL) schemes. The foundational analyses in \cite{dolcetta1983discrete,falcone1994discrete} -- and the monograph \cite{falcone2013semi} -- establish the convergence of SL discretizations to the viscosity solution, even when the OVF fails to be differentiable, at least in some discounted-cost setting.
For practical computations, one still requires a finite-dimensional surrogate of the OVF. This is achieved by selecting an interpolant that satisfies the temporally discrete HJB equation at the grid nodes while permitting evaluation at off-grid points. Variants of SL schemes therefore differ mainly in the underlying interpolation space: sparse grids \cite{bokanowski2013adaptive}, radial basis functions \cite{alla2023hjb}, and polynomial approximations on adaptive tree-structured grids \cite{grune1997adaptive} are representative examples.\\
\textbf{Dimensionality reduction.} All grid-based discretizations -- finite-difference and SL alike -- remain fundamentally constrained by the curse of dimensionality. A prototypical mitigation strategy is model-order reduction followed by the application of a grid-based HJB solver to the reduced system, as demonstrated in \cite{kunisch2004hjb}.\\
\textbf{Data–driven OVF approximation.}
Recent research has focused increasingly on approximating the OVF from data generated by open‐loop optimal controls obtained by Pontryagin's Maximum Principle.  
Surrogates have been constructed with deep neural networks \cite{Nakamura21}, sparse‐grid polynomials \cite{azmi2021optimal}, and kernel methods \cite{EHRING2022325,ehring2024hermite}.  
In \cite{ehring2024hermite}, two authors of the present article designed the surrogate to enforce positive semidefiniteness rigorously. 
The data-driven approximation can be made online-adaptive and combined with model-order reduction, as illustrated in \cite{ehring2025online}. 
Across all three of our studies \cite{EHRING2022325,ehring2024hermite,ehring2025online}, as well as in the present work, we employ kernel methods, since they are generally more robust to the curse of dimensionality and, under suitable regularity and sampling assumptions, can exhibit convergence rates with respect to the number of interpolation points that have a decay factor that is independent of the ambient dimension \cite{wenzel2023analysis}. 
Moreover, kernel surrogates are grid-free and admit a rigorous functional-analytic treatment within RKHSs, a perspective that we also leverage in the current work.
\\
\textbf{Model predictive control (MPC).}
 Unlike the offline approximation approach discussed above, model predictive control (MPC)~\cite{gruene2011} approximates the optimal feedback of the infinite-horizon OCP by repeatedly solving finite-horizon OCPs online at each sampling instant, while applying the resulting control for a very short time horizon. Classical analyses investigate how large the prediction horizon 
$T$ must be in order to guarantee closed-loop stability; see, e.g., \cite{grune2009analysis,Rebele2012uncostrained,azmi2016stabilizability}.
\\
\textbf{Fixed–point schemes.}
Value iteration (VI)~\cite{bellman1957B} is an iterative scheme derived to solve the HJB in the SL form that guarantees global convergence under standard hypotheses~\cite{falcone1987}. However, its convergence rate typically degrades as the spatial grid is refined. Computational efficiency can be improved by approximating the OVF with surrogate models such as Shepard moving-least-squares interpolants based on radial basis functions~\cite{alla2023hjb} or neural networks~\cite{heydari2014}. Another widely‐employed fixed-point scheme is the policy iteration (PI) method, originally introduced in \cite{bellman1957}.  As the practical numerical realization of the approximation scheme for the OVF that we propose later in this work is algorithmically equivalent to PI, we provide a more detailed exposition of PI:\\
For the PI, at each iteration, given a current feedback law \(u\colon \Omega\to\mathbb{R}^M\) with $u(0) = 0$, the procedure comprises two stages: \\
\textit{1. Policy Evaluation:} Compute the value function \(v_u\colon \Omega\to\mathbb{R}\) associated with the current policy \(u\) by solving the linearized Hamilton-Jacobi-Bellman equation, also known as the generalized HJB (GHJB) equation:
\begin{equation}\label{eq:GHJB}
  \mathrm{GHJB}(v_u,u,x)
  :=
  \bigl\langle f(x)+g(x)\,u(x),\,\nabla v_u(x)\bigr\rangle
  + h(x)
  + \normW{u(x)}{R}^2 
  = 0,
  \quad
  v_u(0)=0,
  \quad
  x\in\Omega.
\end{equation}
The solution \(v_u\) to the GHJB is given by the infinite‐horizon performance index of \(u\), i.e.,
\begin{equation*}
  v_u(x)
  =
  \inte{0}{\infty}{t}{
      h\bigl(\mathbf{x}_u(t;x)\bigr)
      + \normW{u(\mathbf{x}_u(t;x))}{R}^2 
    },
\end{equation*}
where the closed‐loop trajectory \(\mathbf{x}_u(t;x)\) satisfies
\begin{equation*}
  \dot{\mathbf{x}}_u(t;x)
  = f\bigl(\mathbf{x}_u(t;x)\bigr)
    + g\bigl(\mathbf{x}_u(t;x)\bigr)\,u\bigl(\mathbf{x}_u(t;x)\bigr),
  \quad
  \mathbf{x}_u(0;x)=x,
\end{equation*}
if the triple
\((u,\Omega,v_u)\) is {admissible}, meaning
finite cost (\(v_u(x) < \infty\) for every \(x \in \Omega\)),  forward invariance (\(\mathbf{x}_u(t;x) \in \Omega\) for all \(t \ge 0\)
        whenever \(x \in \Omega\)) and regularity (\(v_u \in C^{1}(\Omega,\mathbb{R})\)) must be satisfied.\\
\textit{2. Policy Improvement:} Update the control law by minimizing the Hamiltonian with respect to \(u\), yielding
\begin{equation*}
  u^{+}(x)
  =
  -\tfrac{1}{2}\,R^{-1}\,g(x)^{\top}\,\nabla v_u(x).
\end{equation*}

\noindent These two steps are alternated until convergence, i.e., until the pair \(\bigl(u,v_u\bigr)\) satisfies the full HJB equation \eqref{eq:HBJ1}-\eqref{eq:HBJ2}. Existing numerical methods differ mainly in how the linear GHJB in each step is solved. Low-rank approximation techniques, such as those in~\cite{eigel2023dynamical,dolgov2021tensor}, efficiently represent the value function; Galerkin approximation approaches~\cite{kalise2018polynomial,beard1997galerkin} minimize an \(L^{2}\)-residual of the generalized HJB equation, providing a polynomial framework for approximating the value function; neural network approximations \cite{abu2005nearly,Kamalapurkar2018,vamvoudakis2010online}, and grid-based schemes enhanced with model-order reduction~\cite{alla2020B} have also been employed. \\

\noindent Nevertheless, most PI-based schemes either lack rigorous convergence guarantees or rely on restrictive assumptions. The root difficulty arises even for {exact} PI (i.e., without numerical approximation of the GHJB solution): the well-posedness of the GHJB equation and the selection of the iteration domain \(\Omega\) are delicate issues; see~\cite{ehring2025convergencepolicyiterationinfinitehorizon} for a detailed analysis. Introducing additional discretization and approximation layers only worsens these challenges. Motivated by these considerations, we adopt a different strategy: we construct approximations of the OVF directly from verification conditions for optimality. This leads to a finite-dimensional optimization problem whose solutions provably approximate \(v^*\). Moreover, we show that a practical iterative solver for this program is algorithmically equivalent to a PI scheme.\\

\noindent We proceed by describing the main contributions and the overall organization of the article.
In Section 2, we derive verification conditions for the OVF on possibly bounded domains by adapting standard arguments from dynamic programming and HJB theory.  These conditions are crucial for recovering the OVF within an RKHS.
In Section 3, we review kernel methods and formulate linear and nonlinear optimal recovery problems. Furthermore, we extend the nonlinear optimal recovery framework proposed in \cite{chen2021solving} to accommodate functional inequality constraints. Moreover, we provide rigorous conditions under which these problems admit finite-dimensional representations and show that a practical solution approach via a Gauss-Newton ansatz is equivalent to a linear optimal recovery problem. In Section 4, we combine the verification conditions with the RKHS-based nonlinear optimal recovery framework to obtain the collocation-type scheme for approximating the OVF. We establish convergence of the resulting approximations to the OVF as the collocation points become dense in the ambient domain and, via a Gauss–Newton ansatz, show that the resulting practical iterative solver for the collocation problem is equivalent to PI with a standard generalized kernel interpolation ansatz for solving the GHJB.
In Section 5, we present numerical studies on four model problems, including a 50-dimensional nonlinear OCP.  We conclude with a discussion and outlook, highlighting open questions in Section 6.

\section{Verification conditions for the OVF}\label{sec:VerificationCon}

Constructing an approximation scheme for the OVF solely from the HJB equation is problematic, since the HJB provides only a necessary condition for the OVF. In particular, the equation can have multiple solutions -- even in the standard linear–quadratic setting -- making it non‑trivial to identify the OVF \(v^*\) by solving the HJB alone.
As an illustration, consider the two-dimensional problem satisfying Assumption~\ref{as:data} with
\[
h(x_1,x_2) \;=\; x_1^2 + x_2^2,
\quad
R \;=\;
\begin{bmatrix}
1 & 0\\
0 & 1
\end{bmatrix},
\quad
f(x_1,x_2) \;=\;
\begin{bmatrix}
x_1\\
x_2
\end{bmatrix},
\quad
g(x_1,x_2) \;=\;
\begin{bmatrix}
1 & 0\\
0 & 1
\end{bmatrix}.
\]
The associated algebraic Riccati equation (ARE) arising from the  HJB equation~\eqref{eq:HBJ1} under the quadratic OVF ansatz
\(v(x)=x^\top P x\) with \(P=P^\top\in\mathbb{R}^{2\times 2}\) is
\[
2P - P^2 + I_2 \;=\; 0,
\]
where \(I_2\) denotes the \(2\times 2\) identity matrix. In this instance, the ARE admits infinitely many real symmetric solutions of the form $P= Q\operatorname{diag}\bigl(\lambda_1,\lambda_2\bigr)Q^{\top}$ with $\lambda_1,\lambda_2 \in \{ 1+\sqrt{2},1-\sqrt{2} \}$ for any orthogonal matrix $Q \in \mathbb{R}^{2 \times 2}$.  
Only \(P^* = \operatorname{diag}\bigl(1+\sqrt{2},\,1+\sqrt{2}\bigr) \) is positive‐definite, and thus
\[
v^*(x) \;=\; \langle x, P^* x \rangle
\]
is the unique valid OVF. All other solutions generate candidate OVFs that are negative along some directions towards the origin and therefore do not represent the optimal cost-to-go, which must be nonnegative.
A necessary condition for any candidate OVF is therefore positive semidefiniteness, i.e.,
\[
v(x)\ge 0 \quad\forall x\in\Omega, 
\qquad
v(0)=0.
\]
However, even this requirement together with satisfying the HJB equation are not sufficient for a general set $\Omega$, because also \(\tilde v(x)=\langle x, \tilde P x \rangle\) with $\tilde{P}=\operatorname{diag}\bigl(1+\sqrt{2},\,1-\sqrt{2}\bigr)$ is nonnegative on the non‑trivial cone
\[
\tilde M:=\bigl\{x\in\mathbb{R}^2 \mid \langle x, \tilde P x \rangle\ge 0\bigr\}.
\]
We must therefore enforce the HJB equation and positive semidefiniteness on a domain -- a nonempty, open, connected subset of \(\mathbb{R}^N\) -- that contains the origin.  Note that \(\tilde M\) is not a domain in this sense, as it does not contain a neighborhood of the origin.
With this observation we obtain the following local results.

\begin{theorem}[Local verification of optimality]\label{thm:VerOpt}
Assume that Assumption~\ref{as:data} holds and let
\(\Omega\subset\mathbb{R}^{N}\) be a domain containing the origin.
Suppose the OVF $v^*\in C^1(\Omega,\mathbb{R})$. If there exists a candidate $v\in C^{1}(\Omega,\mathbb{R})$ such that
\begin{enumerate}
  \item[(1)] \(v\) is positive semidefinite (\(v(x)\ge 0\) for all \(x\in\Omega\) and \(v(0)=0\));
  \item[(2)] the HJB identity holds:
        \[
          \bigl\langle f(x)+g(x)u_{v}(x),\nabla v(x)\bigr\rangle
          + h(x) + \normW{u_{v}(x)}{R}^2 = 0,
          \qquad\forall\,x\in\Omega,
        \]
        where
        \[
          u_{v}(x) := -\tfrac12\,R^{-1} g(x)^{\!\top}\nabla v(x),
        \]
\end{enumerate}
then there exists a set $\tilde{\Omega}\subset\Omega$ containing a neighborhood of the origin such that
\[
  v(x) \;=\; v^{*}(x)\qquad \forall\,x\in\tilde{\Omega}.
\]
In particular, $u_v$ is an optimal feedback on $\tilde{\Omega}$.
\end{theorem}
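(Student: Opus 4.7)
The plan is to sandwich $v$ between $v^{*}$ and itself on a carefully chosen neighborhood $\tilde{\Omega}$ of the origin where $v$ acts as a Lyapunov function for the closed loop generated by $u_v$. A preliminary step is to upgrade condition~(1) to strict positivity on $\Omega\setminus\{0\}$: if $v(x_0)=0$ for some $x_0\in\Omega$, then $x_0$ is a global minimizer of $v$ on $\Omega$, so $\nabla v(x_0)=0$; substituting into the equivalent HJB identity \eqref{eq:HBJ1} (which follows from (2) together with the explicit formula for $u_v$) yields $h(x_0)=0$, and positive definiteness of $h$ forces $x_0=0$. I then pick $r>0$ with $\cl{B_r(0)}\subset\Omega$, set $c:=\min_{\norm{x}=r}v(x)>0$, and take $\tilde{\Omega}$ to be the connected component of $\{x\in B_r(0):v(x)<c\}$ containing the origin; by continuity this is an open neighborhood of $0$ with compact closure inside $\Omega$. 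Differentiating $t\mapsto v(\mathbf{x}_{u_v}(t;x))$ along the closed loop and invoking (2) yields
\[
\frac{d}{dt}v(\mathbf{x}_{u_v}(t;x)) \;=\; -h(\mathbf{x}_{u_v}(t;x)) - \normW{u_v(\mathbf{x}_{u_v}(t;x))}{R}^{2} \;\le\; 0,
\]
so $v$ is non-increasing along closed-loop trajectories and the barrier level $\{v=c\}$ renders $\cl{\tilde{\Omega}}$ positively invariant under the feedback $u_v$.

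For the direction $v\ge v^{*}$ on $\tilde{\Omega}$, I integrate this identity from $0$ to $T$, obtaining
\[
v(x) \;=\; v(\mathbf{x}_{u_v}(T;x)) \;+\; \int_{0}^{T}\!\bigl(h(\mathbf{x}_{u_v}(t;x)) + \normW{u_v(\mathbf{x}_{u_v}(t;x))}{R}^{2}\bigr)\,dt.
\]
Finiteness of the left-hand side forces $\int_{0}^{\infty}h(\mathbf{x}_{u_v})\,dt \le v(x) < \infty$, so combined with the trajectory staying in the compact invariant set $\cl{\tilde{\Omega}}$, a LaSalle-type argument (the nonempty, compact, positively invariant omega-limit set is contained in $\{h=0\}=\{0\}$) forces $\mathbf{x}_{u_v}(t;x)\to 0$, whence $v(\mathbf{x}_{u_v}(T;x))\to 0$. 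Passing to the limit produces $v(x) = J_{\infty}\bigl(x,\,u_v\circ\mathbf{x}_{u_v}(\cdot;x)\bigr) \ge v^{*}(x)$.

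The reverse direction rests on observing that, by the quadratic structure in $u$, $u_v(x)$ is the unique pointwise minimizer of $u\mapsto\inner{f(x)+g(x)u}{\nabla v(x)}+h(x)+\normW{u}{R}^{2}$, so (2) is equivalent to the pointwise lower bound $\inner{f(x)+g(x)u}{\nabla v(x)}+h(x)+\normW{u}{R}^{2}\ge 0$ valid for every $u\in\mathbb{R}^{M}$. Applied along the trajectory of an arbitrary $\mathbf{u}\in\mathcal{U}_{\infty}$ -- as long as that trajectory remains in $\Omega$ -- this yields $v(x) \le v(\mathbf{x}_{\mathbf{u}}(T;x)) + J_{T}(x,\mathbf{u})$. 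The main technical obstacle is then to control the tail $v(\mathbf{x}_{\mathbf{u}}(T;x))$ as $T\to\infty$: the case $J_{\infty}(x,\mathbf{u})=+\infty$ is vacuous, while for finite-cost controls $\int_{0}^{\infty}h(\mathbf{x}_{\mathbf{u}})\,dt<\infty$ together with the positive definiteness of $h$ and boundedness of the trajectory (which can be enforced by shrinking $\tilde{\Omega}$ further and splicing $\mathbf{u}$ with the stabilizing feedback $u_v$ once the state enters $\tilde{\Omega}$) supply a sequence $t_{n}\to\infty$ with $\mathbf{x}_{\mathbf{u}}(t_{n};x)\to 0$, hence $v(\mathbf{x}_{\mathbf{u}}(t_{n};x))\to 0$. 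Taking the infimum in $\mathbf{u}$ gives $v(x) \le v^{*}(x)$ on $\tilde{\Omega}$; combined with the lower bound this produces $v\equiv v^{*}$ on $\tilde{\Omega}$, and optimality of $u_v$ is immediate since it attains this value.
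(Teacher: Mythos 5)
Your preliminary step (strict positivity via the critical-point argument at a zero of $v$), the construction of the forward-invariant sublevel set, and the lower bound $v\ge v^{*}$ on $\tilde{\Omega}$ are all sound and essentially parallel the paper's Steps 1, 2 and 4 (the paper integrates the HJB along trajectories to get strict positivity and uses Barbalat's lemma where you invoke LaSalle, but these are cosmetic differences). The genuine gap is in the reverse direction $v\le v^{*}$. You apply the pointwise Hamiltonian inequality along the trajectory of an \emph{arbitrary} finite-cost $\mathbf{u}\in\mathcal{U}_\infty$ and then want to take the infimum over $\mathbf{u}$. But the inequality $v(x)\le v(\mathbf{x}_{\mathbf{u}}(T;x))+J_T(x,\mathbf{u})$ is only available while $\mathbf{x}_{\mathbf{u}}(\cdot\,;x)$ remains in $\Omega$, and a finite-cost (even near-optimal) trajectory has no reason to stay inside the bounded domain $\Omega$: finiteness of $\int_0^\infty h(\mathbf{x}_{\mathbf{u}})\,dt$ does not confine the state to $\Omega$. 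Your proposed repair -- splicing $\mathbf{u}$ with the feedback $u_v$ -- changes the control and hence its cost, so the resulting estimate no longer compares $v(x)$ with $J_\infty(x,\mathbf{u})$, which is what enters the infimum defining $v^{*}(x)$. And if you instead restrict the infimum to controls whose trajectories stay in $\Omega$ and converge to the origin, you only bound $v(x)$ by that \emph{restricted} infimum, which is a priori $\ge v^{*}(x)$; this gives nothing.

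The paper closes this direction differently, and this is where the hypothesis $v^{*}\in C^{1}(\Omega,\mathbb{R})$ -- which your proposal never uses -- becomes essential. Since $v^{*}$ is $C^1$ and satisfies the HJB, one forms the specific feedback $u^{*}(x)=-\tfrac12 R^{-1}g(x)^{\top}\nabla v^{*}(x)$ and repeats your sublevel-set construction for $v^{*}$ itself, obtaining a forward-invariant neighborhood $\Omega_{v^{*}}$ on which the $u^{*}$-closed loop stays, converges to the origin, and satisfies $J_\infty(x,u^{*})=v^{*}(x)$ (integrate the HJB for $v^{*}$ along its own closed loop). Applying your Hamiltonian inequality for $v$ along this \emph{single} trajectory, whose confinement to $\Omega$ is guaranteed by construction, yields $v(x)\le J_\infty(x,u^{*})=v^{*}(x)$ on $\Omega_{v^{*}}$; the theorem then holds on $\tilde{\Omega}=\Omega_v\cap\Omega_{v^{*}}$. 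Replacing your infimum-over-all-controls argument by this one-trajectory comparison would complete your proof.
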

\begin{proof}
    See Appendix \ref{sec:appoptVer}.
\end{proof}
\noindent A limitation of Theorem~\ref{thm:VerOpt} is that the set $\tilde{\Omega}$ depends on the particular candidate $v$ and, in general, need not coincide with the domain $\Omega$ on which the HJB equation is satisfied. This dependence is inconvenient for the convergence analysis of the verification-based approximation scheme for the OVF presented in Section 4. To overcome this issue, we seek additional sufficient conditions guaranteeing that the candidate OVF is equal to the OVF on a fixed domain, which is independent of the specific candidate. Our first result in this direction exploits real analyticity. Let $A(\Omega)$ denote the space of real-analytic functions on $\Omega$. Combining Theorem~\ref{thm:VerOpt} with the identity theorem for analytic functions yields the following corollary.

\begin{korollar}\label{coro:VerificAnalytic}
In addition to the hypotheses of Theorem~\ref{thm:VerOpt}, suppose that $v^{*}\in A(\Omega)$ and
\begin{enumerate}
  \item[(3)] $v\in A(\Omega)$.
\end{enumerate}
Then
\[
  v(x)\equiv v^{*}(x) \qquad \forall\,x\in\Omega .
\]
\end{korollar}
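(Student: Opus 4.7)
The plan is to invoke Theorem~\ref{thm:VerOpt} to obtain pointwise equality of $v$ and $v^{*}$ on a set with nonempty interior, and then promote this local coincidence to equality on all of $\Omega$ via the identity theorem for real-analytic functions. The connectedness of $\Omega$, which is built into the definition of a domain used in Section~\ref{sec:VerificationCon}, is exactly what makes the analytic-continuation step available globally.

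Concretely, I would first apply Theorem~\ref{thm:VerOpt} to the candidate $v$: the hypotheses of that theorem are assumed to hold, so it produces a subset $\tilde{\Omega}\subset\Omega$ containing some open neighborhood $U$ of the origin on which $v\equiv v^{*}$. Next, I would consider the difference $w:=v-v^{*}$. By hypothesis~(3) together with the standing assumption $v^{*}\in A(\Omega)$, the function $w$ lies in $A(\Omega)$, and by the previous step $w$ vanishes on the nonempty open set $U\subset\Omega$. Since $\Omega$ is a domain and hence connected, the identity theorem for real-analytic functions forces $w\equiv 0$ throughout $\Omega$, which is the claim $v\equiv v^{*}$ on $\Omega$.

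I do not anticipate a substantive obstacle here, because Theorem~\ref{thm:VerOpt} has already performed the analytic work of producing equality on a neighborhood of the origin; the corollary only upgrades this to $\Omega$ by a standard unique-continuation argument. The only mildly delicate point is making sure that the set $\tilde{\Omega}$ delivered by Theorem~\ref{thm:VerOpt} actually contains an open set (rather than, say, a lower-dimensional slice through the origin), but the theorem statement explicitly guarantees a neighborhood of the origin, which is open by definition. Consequently, the corollary should follow in a short paragraph combining Theorem~\ref{thm:VerOpt} with the identity theorem on the connected set $\Omega$.
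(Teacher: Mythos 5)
Your proposal is correct and follows essentially the same route as the paper: invoke Theorem~\ref{thm:VerOpt} to obtain equality of $v$ and $v^{*}$ on an open neighborhood of the origin, then apply the identity theorem for real-analytic functions on the connected domain $\Omega$ to conclude $v\equiv v^{*}$ everywhere. Phrasing the final step via the difference $w=v-v^{*}$ rather than directly via coincidence of two analytic functions is an immaterial variation.
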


\begin{proof}
See Appendix~\ref{sec:appoptVerAna}.
\end{proof}
\noindent Real analyticity is a strong assumption -- particularly for the OVF -- and is typically hard to verify for a given OCP. Nevertheless, it yields equality on the entire domain $\Omega$. 
The next corollary replaces analyticity with alternative conditions that are substantially easier to certify; the resulting conclusion guarantees equality only on a neighborhood of the origin, but this neighborhood is independent of the particular candidate.
\begin{korollar}\label{coro:VerificConstant}
In addition to the hypotheses of Theorem~\ref{thm:VerOpt}, suppose there exist constants $\alpha,\beta>0$ such that
\begin{enumerate}
  \item[(3)] \(\alpha\,\lVert x\rVert^{2}\ \le\ v(x)\ \le\ \beta\,\lVert x\rVert^{2}\quad \text{for all } x\in\Omega.\)
\end{enumerate}
Then there exists  an open neighborhood of the origin $\tilde{\Omega}\subset\Omega$, depending only on $\Omega$, $\alpha$, $\beta$, and $v^{*}$, with
\[
  v(x)=v^{*}(x) \qquad \forall\,x\in\tilde{\Omega}.
\]
Moreover, the set $\tilde{\Omega}$ is independent of the particular candidate $v$.
\end{korollar}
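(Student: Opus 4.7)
The plan is to construct, independently of $v$, an open neighborhood $\tilde{\Omega}$ of the origin on which \emph{both} the closed-loop trajectory $x_{u_{v}}(\,\cdot\,;x)$ and the optimal trajectory $x_{u^{*}}(\,\cdot\,;x)$ (with $u^{*}(x):=-\tfrac12 R^{-1}g(x)^{\top}\nabla v^{*}(x)$) remain in $\Omega$ for all $t\ge 0$. Once this is in hand, the two Lyapunov-type integrations underpinning Theorem~\ref{thm:VerOpt} can be applied to obtain $v(x)\ge v^{*}(x)$ and $v(x)\le v^{*}(x)$ simultaneously on $\tilde{\Omega}$. The central idea is that, although the natural invariant sublevel set $\{v\le c\}$ itself depends on $v$, the quadratic sandwich $(3)$ lets me replace it with a genuine $v$-independent ball, while a sublevel set of $v^{*}$ handles the optimal trajectory.

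Concretely, I would fix $\bar{\rho}>0$ with $\overline{B(0,\bar{\rho})}\subset\Omega$ and set $c:=\alpha\bar{\rho}^{2}$. By $(3)$,
\[
B\bigl(0,\bar{\rho}\sqrt{\alpha/\beta}\bigr)\ \subset\ \{x\in\Omega:v(x)\le c\}\ \subset\ B(0,\bar{\rho})\subset\Omega,
\]
so the inner ball -- which depends only on $\Omega,\alpha,\beta$ -- is contained in a compact sublevel set of $v$. The HJB identity $(2)$ of Theorem~\ref{thm:VerOpt} makes $v$ nonincreasing along $x_{u_{v}}(\,\cdot\,;x)$, so this sublevel set is forward-invariant; integrating the identity and using $v\ge 0$ yields $J_{\infty}(x,u_{v})<\infty$. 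Combined with boundedness of the trajectory (hence of the integrand and its time derivative by the $C^{1}$ hypothesis of Assumption~\ref{as:data}), Barbalat's lemma gives $x_{u_{v}}(t;x)\to 0$, and therefore $v(x_{u_{v}}(T;x))\to 0$ by the upper bound in $(3)$. Passing to the limit $T\to\infty$ in the integral identity yields $v(x)=J_{\infty}(x,u_{v})\ge v^{*}(x)$.

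For the reverse inequality, note that Assumption~\ref{as:data} together with the infimum definition of $v^{*}$ makes $v^{*}$ positive definite, so $\mu:=\min_{\partial B(0,\bar{\rho})}v^{*}>0$. For a fixed $c^{*}\in(0,\mu)$, let $\tilde{\Omega}^{*}$ be the connected component of $\{v^{*}<c^{*}\}$ containing the origin. A connectedness argument -- any continuous path escaping $\tilde{\Omega}^{*}$ must first cross $\partial B(0,\bar{\rho})$, where $v^{*}\ge\mu>c^{*}$ -- confines $\tilde{\Omega}^{*}$ to $B(0,\bar{\rho})\subset\Omega$; thus $\tilde{\Omega}^{*}$ depends only on $\Omega$ and $v^{*}$. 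Since $v^{*}$ is a Lyapunov function for the $u^{*}$-dynamics, $x_{u^{*}}(\,\cdot\,;x)$ never leaves $\tilde{\Omega}^{*}$ and tends to $0$ by the same Barbalat argument. Because $u_{v}$ is the unique minimizer of the Hamiltonian in $(2)$, the associated expression with $u^{*}$ in place of $u_{v}$ is nonnegative pointwise on $\Omega$, so integrating along $x_{u^{*}}$ and letting $T\to\infty$ yields $v(x)\le v^{*}(x)$, using $v(x_{u^{*}}(T;x))\le\beta\|x_{u^{*}}(T;x)\|^{2}\to 0$ from $(3)$.

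Setting $\tilde{\Omega}:=B\bigl(0,\bar{\rho}\sqrt{\alpha/\beta}\bigr)\cap\tilde{\Omega}^{*}$ produces the required open neighborhood -- depending only on $\Omega,\alpha,\beta,v^{*}$ and manifestly independent of the candidate $v$ -- on which $v\equiv v^{*}$. The hardest step I anticipate is the pair of Barbalat applications: one needs uniform continuity of $t\mapsto h(x_{u_{v}}(t;x))$ and $t\mapsto h(x_{u^{*}}(t;x))$ on $[0,\infty)$, which reduces to uniform bounds on the trajectories and their time derivatives -- both obtainable from the forward-invariance of the Lyapunov sublevel sets constructed above together with the $C^{1}$ regularity in Assumption~\ref{as:data}.
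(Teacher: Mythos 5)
Your proposal is correct and follows essentially the same route as the paper: you redefine the $v$-sublevel set with threshold $c=\alpha\bar\rho^{2}$, use the quadratic sandwich to trap the $v$-independent ball $B\bigl(0,\bar\rho\sqrt{\alpha/\beta}\bigr)$ inside it, and intersect with a $v^{*}$-sublevel neighborhood before running the two Lyapunov integrations of Theorem~\ref{thm:VerOpt}. The only (harmless) cosmetic difference is that the paper works with the strict sublevel set $\{v<c\}$ inside $\cl{B_r(0)}$ and phrases forward invariance via a first-exit-time contradiction rather than monotonicity of $v$ along trajectories.
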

\begin{proof}
    See Appendix \ref{sec:VerificConstant}.
\end{proof}
\noindent 
A principal practical challenge in applying Corollary~\ref{coro:VerificConstant} is the selection of the constants $\alpha>0$ and $\beta>0$. For certain OCP instances and  an ill-chosen pair $(\alpha,\beta)$, the feasibility set may be  empty: no candidate function $v$ can satisfy conditions (1)–(2) of Theorem~\ref{thm:VerOpt} together with condition (3) of Corollary~\ref{coro:VerificConstant}. 
Necessary conditions for the OVF $v^*$ to satisfy quadratic two-sided bounds are naturally expressed in terms of stabilizability and detectability properties of the linearization at the origin. In essence, suppose $h$ and $v^*$ are twice continuously differentiable in a neighborhood of the origin, and let
\[
A := Df(0), 
\qquad B := g(0), 
\qquad Q := \tfrac{1}{2}\,H_{h}(0),
\]
and assume that $(A,B)$ is stabilizable and $(Q^{1/2},A)$ is detectable with  $Df$ denoting the Jacobian of $f$ and $H_{h}$ the Hessian of $h$. Then the continuous-time ARE associated with the linearized dynamics,
\begin{equation*}
  A^{\top} P \;+\; P\,A
  \;-\; P\,B\,R^{-1} B^{\!\top} P
  \;+\; Q \;=\; 0,
\end{equation*}
admits a unique, symmetric, positive definite solution $P\in\mathbb{R}^{N\times N}$; see  \cite{lancaster1995algebraic}. Under these conditions, one obtains local quadratic bounds: there exists a neighborhood $\Omega_{0}$ of the origin such that
\[
    \tfrac{1}{2}\,\lambda_{\min}(P)\,\lVert x\rVert^{2}\ \le\ v^{*}(x)\ \le\ 2\,\lambda_{\max}(P)\,\lVert x\rVert^{2}
    \qquad \text{for all } x\in\Omega_{0},
\]
which follows from the  expansion $v^{*}(x)=x^{\top}P x + o(\lVert x\rVert^{2})$ as $x\to 0$. In our numerical experiments in Section~5, these lower/upper quadratic constants proved to be adequate on the full computational domain $\Omega$, even though the theoretical guarantee above is local. For a bounded set $\Omega$ (denoting with $\operatorname{cl}$  the closure of a set), a standard globalization  is to set
\[
\alpha^{*}
  := \min\!\Bigl\{
        \tfrac12\,\lambda_{\min}(P),\;
        \min_{x\in \cl{\Omega\setminus \Omega_{0}}}
          \frac{v^{*}(x)}{\|x\|^{2}}
      \Bigr\},
\qquad
\beta^{*}
  := \max\!\Bigl\{
       2\,\lambda_{\max}(P),\;
        \max_{x\in \cl{\Omega\setminus \Omega_{0}}}
          \frac{v^{*}(x)}{\|x\|^{2}}
      \Bigr\},
\]
which are finite by continuity of $x\mapsto v^{*}(x)/\|x\|^{2}$ away from the origin. A detailed analysis, while standard, would require additional notation and is omitted here. Motivated by these considerations, we adopt the following standing assumption.

\begin{assumption}[Quadratic regularity at the origin]\label{as:data2}
For the given OCP, there exist constants $\alpha^*,\beta^* >0$ such that
\[
\alpha^*\,\lVert x\rVert^{2}\ \le\ v^*(x)\ \le\ \beta^*\,\lVert x\rVert^{2}
\qquad \text{for all } x\in\Omega.
\]
\end{assumption}

\noindent Building on the verification conditions established above, we develop two collocation-type schemes in RKHSs: one for real-analytic OVFs and one for OVFs that satisfy global quadratic bounds. The construction and analysis of these schemes are presented in Section~4, after we introduce an abstract optimal recovery framework in RKHSs that provides the theoretical foundation.

\section{Background on kernel methods}
We begin by recalling basic notions of positive definite kernels and RKHSs. 
Let $\Omega$  be a nonempty set and consider a symmetric function  
$
k:\Omega\times\Omega\to\mathbb{R}
$
called a kernel.  It is said to be {positive definite} (p.d.) if for every finite set of pairwise distinct points  
$
X_n:=\{x_1,\dots,x_n\}\subset\Omega,
$  
the Gram matrix  
$
\mathcal{K}_{X_n}:=\bigl(k(x_i,x_j)\bigr)_{i,j=1}^{n}
$  
is positive semidefinite.  If $\mathcal{K}_{X_n}$ is positive definite for every $X_n$, then $k$ is called {strictly} positive definite (s.p.d.).  Clearly, every s.p.d.\ kernel is p.d.
Positive definite kernels are particularly important because each of them induces a unique RKHS $\Hk$ of real‑valued functions on~$\Omega$ with $k$ as its reproducing kernel (see \cite{wendland2004}).  A Hilbert space $\Hk$ of functions $\xi:\Omega\to\mathbb{R}$ is an RKHS with reproducing kernel $k$ if  
 $k(x,\cdot)\in\Hk$ for all $x\in\Omega$, and
   the {reproducing property}
     \begin{equation}\label{eq:repProp}
        \langle \xi,\,k(x,\cdot)\rangle_{\Hk} = \xi(x),\qquad\forall \xi\in\Hk,\;x\in\Omega,
       \end{equation}
        holds.
This property also extends to derivatives. Let \(\Omega \subset \mathbb{R}^N\) be a nonempty open set, and let \(k \in C^{2}(\Omega \times \Omega;\mathbb{R})\). Then, for every \(x \in \Omega\), every \(\xi \in \mathcal{H}_k(\Omega)\), and every \(s \in \{1,\dots,N\}\),
\begin{equation}\label{eq:repPropDeriv}
  \partial^{s}\xi(x) = \bigl\langle \xi, \partial^s_{1} k(x,\cdot) \bigr\rangle_{\mathcal{H}_k(\Omega)} ,
\end{equation}
where \(\partial^s_{1}\) denotes the partial derivative with respect to the \(s\)-th coordinate of the first argument of \(k\).
  This is a direct consequence of the reproducing property and the smoothness of~$k$; see  ~\cite[Theorem~10.45]{wendland2004}.

\subsection{Linear optimal recovery problems in RKHSs}
\label{subsec:lin_opt_rec}
The verification–based methods that we employ for approximating the OVF are rooted in the theory of {nonlinear optimal recovery} problems.
Because these nonlinear problems are a direct extension of their linear counterparts -- often called generalized kernel interpolation -- we first summarize the linear theory.  
Our exposition follows \cite[Chap.~16]{wendland2004}, but is adapted to the requirements of the present study. Although generalized kernel interpolation constitutes a broad methodological framework, its most prominent applications arise in the numerical solution of linear PDEs, resulting in a kernel collocation method. As a running example, we consider the linear GHJB equation \eqref{eq:GHJB}; beyond illustrating the concepts, this example enables a kernel-based PI scheme in which each policy-evaluation step reduces to solving the collocation-type ansatz for the GHJB equation.\\
\noindent A {linear optimal recovery} (or minimal‑norm interpolation) problem reads:
\begin{definition}[Linear optimal recovery in an RKHS]
Let $k:\Omega\times\Omega\to\R$ be a p.d. kernel with RKHS $\Hk$. 
Given continuous linear functionals $\lambda_1,\dots,\lambda_n\in\Hk'$ and a target vector $r=(r_1,\dots,r_n)^\top\in\R^n$. 
The {linear optimal recovery} problem consists in finding the generalized minimal-norm interpolant
\begin{equation}\label{eq:genInterpol}
  s^n \;=\; \argmin_{s\in\Hk}\Bigl\{\,\|s\|_{\Hk} \;\big|\; \lambda_j(s)=r_j \text{ for } j=1,\dots,n \Bigr\}.
\end{equation}
\end{definition}
\noindent If the functionals $\lambda_1,\dots,\lambda_n$ are linearly independent, \cite[Theorem~16.1]{wendland2004} guarantees that problem \eqref{eq:genInterpol} admits a unique minimizer.  
Moreover, this minimizer possesses a finite‑dimensional representation
\begin{equation*}
  s^{n}(x)\;=\;
  \sum_{j=1}^{n}\alpha_j\,w_j(x),
  \qquad
  w_j\;:=\;\mathcal{R}_{\mathcal{H}_k}\lambda_j\;\in\;\mathcal{H}_k(\Omega),
\end{equation*}
where $\mathcal{R}_{\mathcal{H}_k}\colon\mathcal{H}_k(\Omega)'\to\mathcal{H}_k(\Omega)$ denotes the Riesz map, which maps linear functionals to their corresponding Riesz representers.  
The coefficients $\underline{\alpha}=\begin{bmatrix}
    \alpha_1 & \cdots & \alpha_n  
\end{bmatrix}^\top\in\mathbb{R}^n$ are obtained by enforcing the generalized interpolation conditions $\lambda_i\!\bigl(s^{n}\bigr)=r_i$ for $i=1,\dots,n$, which leads to the linear system
\[
  \mathcal{K}_{\Lambda}\,\underline{\alpha}= \underline{r},
  \qquad
  (\mathcal{K}_{\Lambda})_{ij}= \langle w_i , w_j \rangle_{\mathcal{H}_k(\Omega)},
  \qquad
  \underline{r}=\begin{bmatrix}
    r_1 & \cdots & r_n  
\end{bmatrix}^\top\in\mathbb{R}^n,
\]
where $\mathcal{K}_{\Lambda}$ is the {generalized Gramian} matrix associated with $\Lambda=\begin{bmatrix}
    \lambda_1 & \cdots & \lambda_n  
\end{bmatrix}\in \left(\Hkdual\right)^n$ and is symmetric positive definite.
Furthermore, the minimal achievable norm is
\begin{equation}
  \label{eq:min_norm_value}
  \min_{s\in\mathcal{H}_k(\Omega)}
  \Bigl\{\,\|s\|_{\mathcal{H}_k(\Omega)} \;\Big|\;
        \lambda_j(s)=r_j,\; j=1,\dots,n
  \Bigr\}
  \;=\;
 \sqrt{ \left\langle \underline{r}, \mathcal{K}_{\Lambda}^{-1}\underline{r} \right\rangle}
\end{equation}
under the condition that functionals $\lambda_1,\dots,\lambda_n$ are linearly independent.\\
When applying this ansatz to approximately solve the GHJB equation in~\eqref{eq:GHJB}, we obtain the following problem: 
For a p.d. kernel $k \in C^{2}(\Omega \times \Omega,\mathbb{R})$ and a finite set of pairwise distinct points $x_0:=0$ and $X_{n}=\{x_{1},\ldots,x_{n}\}\subset\Omega \setminus \{0 \}$ (indexing starts at zero), the goal is to find a surrogate $s_{v_u}^{n}$ such that
\begin{align}
   \langle f(x_{j})+g(x_{j})u(x_{j}), \nabla s_{v_u}^{n}(x_{j}) \rangle
      &= -\,h(x_{j}) - \normW{u(x_{j})}{R}^2 && j=1,\ldots,n, \label{eq:minNormGHJB1}\\
   s_{v_u}^{n}(0) &= 0. \label{eq:minNormGHJB2}
\end{align}
Thus the target values are
\begin{align*}
   r_{j} &= -\,h(x_{j})  - \normW{u(x_{j})}{R}^2, \\
   r_{0} &= 0,
\end{align*}
and the corresponding linear functionals are given by
\begin{align*}
        \lambda_{j}(\,\cdot\,)
      &= \left \langle f(x_{j})+g(x_{j})u(x_{j}), \delta_{x_{j}}   \circ \nabla (\,\cdot\,) \right\rangle, & j&=1,\ldots,n, \label{eq:LinFunc}\\
   \lambda_{0}(\,\cdot\,) &= \delta_{0}(\,\cdot\,),
\end{align*}
where $\delta_{x_{j}}$ denotes the point‑evaluation functional.
These operators are continuous linear functionals in $\mathcal{H}_{k}(\Omega)'$, as shown in the next lemma.
\begin{lemma}
  \label{lem:point-and-grad-eval}
  Let $\Omega\subset\mathbb{R}^{N}$ be a nonempty open set and let
  $k\in C^{2}(\Omega\times\Omega,\mathbb{R})$ be a p.d. kernel.
  \begin{enumerate}
    \item For every $x\in\Omega$ the point‑evaluation functional
       $
        \delta_{x}(\xi)=\xi(x)
     $ for $\xi \in \Hk$,
      belongs to $\mathcal{H}_{k}(\Omega)'$ with Riesz representer  $k(x,\cdot)\in\mathcal{H}_{k}(\Omega)$.
    \item 
      For each $x\in\Omega$ and $a \in \mathbb{R}^N$ the functional
      $
        \lambda_{x,a}(\xi)= \langle a,\nabla \xi(x)\rangle
      $ for $\xi \in \Hk$,
      belongs to $\mathcal{H}_{k}(\Omega)'$ with Riesz representer $w_{x,a} = \langle a, \nabla_1 k(x,\cdot) \rangle $. Here, $\nabla_{1}$ denotes the gradient with respect to the first
      argument of $k$.
  \end{enumerate}
\end{lemma}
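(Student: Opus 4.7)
My plan is to prove both statements by exhibiting the claimed Riesz representer explicitly, verifying that it lies in $\mathcal{H}_k(\Omega)$, and then using the reproducing property (respectively its derivative version) together with Cauchy--Schwarz to deduce boundedness.

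For part (1), I would start from the reproducing property \eqref{eq:repProp}: since $k$ is p.d., $k(x,\cdot)\in\mathcal{H}_k(\Omega)$ by the very definition of an RKHS, and $\delta_x(\xi)=\xi(x)=\langle \xi, k(x,\cdot)\rangle_{\mathcal{H}_k(\Omega)}$ for every $\xi\in\mathcal{H}_k(\Omega)$. Cauchy--Schwarz then yields $|\delta_x(\xi)|\le \sqrt{k(x,x)}\,\|\xi\|_{\mathcal{H}_k(\Omega)}$, so $\delta_x\in\mathcal{H}_k(\Omega)'$ with Riesz representer $k(x,\cdot)$ by uniqueness of Riesz representers.

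For part (2), I would invoke the derivative reproducing identity \eqref{eq:repPropDeriv}, which is available because $k\in C^2(\Omega\times\Omega,\mathbb{R})$; this simultaneously tells me that $\partial_1^s k(x,\cdot)\in\mathcal{H}_k(\Omega)$ for every $s\in\{1,\dots,N\}$ and that $\partial^s \xi(x)=\langle \xi,\partial_1^s k(x,\cdot)\rangle_{\mathcal{H}_k(\Omega)}$. Linearity in the first slot of the inner product then gives
\[
  \lambda_{x,a}(\xi)=\langle a,\nabla\xi(x)\rangle=\sum_{s=1}^{N}a_s\,\partial^s\xi(x)=\Bigl\langle \xi,\;\sum_{s=1}^{N}a_s\,\partial_1^s k(x,\cdot)\Bigr\rangle_{\mathcal{H}_k(\Omega)}=\langle \xi, w_{x,a}\rangle_{\mathcal{H}_k(\Omega)},
\]
with $w_{x,a}=\langle a,\nabla_1 k(x,\cdot)\rangle\in\mathcal{H}_k(\Omega)$ as a finite linear combination of RKHS elements. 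Continuity again follows from Cauchy--Schwarz.

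The only subtle point -- which is not a genuine obstacle but should be cited carefully rather than re-proved -- is that $\partial_1^s k(x,\cdot)$ actually belongs to $\mathcal{H}_k(\Omega)$ and acts as the reproducing element for the partial-derivative functional. This is exactly the content of \cite[Theorem~10.45]{wendland2004}, which was already invoked immediately above the lemma to justify \eqref{eq:repPropDeriv}; hence I would cite this result once and the remainder of the proof reduces to two short applications of the reproducing property and Cauchy--Schwarz. In particular, uniqueness of the Riesz representer (by the Riesz representation theorem) guarantees that the explicit representers I exhibit are the ones claimed in the statement.
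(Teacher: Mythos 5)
Your proposal is correct and follows essentially the same route as the paper: part (1) via the reproducing property and Cauchy--Schwarz, and part (2) via the derivative reproducing identity \eqref{eq:repPropDeriv} (justified by \cite[Theorem~10.45]{wendland2004}), linearity of the inner product, and Cauchy--Schwarz again. The only additions beyond the paper's argument are the explicit bound $\sqrt{k(x,x)}$ and the remark on uniqueness of the Riesz representer, both of which are harmless.
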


\begin{proof}
 1.  By the reproducing property, we have  
      $
        \delta_{x}(\xi)=\xi(x)=\langle \xi,\,k(x,\cdot)\rangle_{\mathcal{H}_{k}}.
      $
      The Cauchy-Schwarz inequality gives
      \(
        |\xi(x)|
        \le\|\xi\|_{\mathcal{H}_{k}}\|k(x,\cdot)\|_{\mathcal{H}_{k}},
      \)
      establishing continuity and the claimed representer.\\
2. Because $k\in C^{2}(\Omega\times\Omega,\mathbb{R})$, \eqref{eq:repPropDeriv} can be applied.
      Hence
      \[
        \lambda_{x,a }(\xi)
        =\sum_{s=1}^{N}\left(a\right)_s\partial^{s}\xi(x)
        =\Bigl\langle \xi,\sum_{s=1}^{N}\left(a\right)_s\partial^{s}_{1}k(x,\cdot)\Bigr\rangle_{\Hk}
        =\langle \xi,w_{x,a}\rangle_{\Hk}.
      \]
      Again, Cauchy-Schwarz yields
      \(
        |\lambda_{x,a }(\xi)|
        \le\|\xi\|_{\Hk}\|w_{x,a }\|_{\Hk},
      \)
      proving continuity and identifying the representer $w_{x,a}$.
\end{proof}
\noindent  Thus, in the GHJB setting, the  Riesz representers are
\[
  w_{0}(\cdot)=k(0,\cdot),\qquad
  w_{j}(\cdot)
  =\langle f(x_{j})+g(x_{j})u(x_{j}), \nabla_{1}k(x_{j},\cdot) \rangle,
  \quad j=1,\dots,n.
\]
Furthermore, for $i,j\ge 1$, the  entries of the  {generalized Gramian} matrix  admit the explicit expression
\[
  \langle w_{j},w_{i}\rangle_{\mathcal{H}_{k}}
  = \left\langle f(x_{i})+g(x_{i})u(x_{i}), 
    \,\mathcal{E}_{k}(x_{i},x_{j})\,
    \left(f(x_{j})+g(x_{j})u(x_{j})\right)\right\rangle,
\]
with the mixed Hessian
\[
  \mathcal{E}_{k}(x_{i},x_{j})
  :=
  \nabla_2 \nabla_1 k(x_i,x_j) \in \mathbb{R}^{N \times N }.
\]

For $i=0,\;j\ge 1$, we have
\[
  \langle w_{j},w_{0}\rangle_{\mathcal{H}_{k}}
  =\left\langle f(x_{j})+g(x_{j})u(x_{j}) , \nabla_{1}k(x_{j},0)\right\rangle,
\]
and for $i=j=0$,
\[
  \langle w_{0},w_{0}\rangle_{\mathcal{H}_{k}}=k(0,0).
\]
A   necessary condition  for the unique solvability of the interpolation problem is the
nonsingularity of the Gram matrix $\mathcal{K}_{\Lambda}$, which, as noted at the beginning of this subsection, is equivalent to the linear independence of the functionals $\{\lambda_j\}_{j=0}^{n}$. To ensure this property, we impose the following structural assumption on the kernel $k$:
\begin{assumption}\label{ass:LinearInd}
Let $\Omega \subset \mathbb{R}^N$ be a nonempty open set and let $k:\Omega\times\Omega\to\mathbb{R}$ be s.p.d. with RKHS $\mathcal{H}_k(\Omega)$.
Assume that the family
\begin{align}\label{eq:linearInWend}
\bigl\{\delta_x(\,\cdot\,)\bigr\}_{x\in\Omega}
\;\cup\;
\bigl\{\delta_x(\,\cdot\,) \circ\partial^{s}(\,\cdot\,) \bigr\}_{x\in\Omega,\ s=1,\dots,N}
\end{align}
is linearly independent in the dual space $\mathcal{H}_k(\Omega)'$.
\end{assumption}
\noindent By \cite[Theorem~16.4]{wendland2004}, Assumption~\ref{ass:LinearInd} is satisfied, for example, when $k$ is an  s.p.d. kernel on $\mathbb{R}^N$ of the form
$k(x,x')=\phi(x-x')$ with $\phi\in L^{1}(\mathbb{R}^{N},\mathbb{R})\cap C^{2}(\mathbb{R}^{N})$.
The next lemma gives a sufficient condition for linear independence
of the functionals that arise in collocation methods for the GHJB equation, as well as in methods based on the verification conditions in Section 4.
\begin{lemma}\label{theo:KernelLinFunc}
Let the kernel $k:\Omega\times\Omega\to\mathbb{R}$ satisfy Assumption~\ref{ass:LinearInd}, let $\ell\in\mathbb{N}$, and let
$A:\Omega\to\mathbb{R}^{N\times \ell}$ be a matrix-valued function with $\operatorname{rank}A(x)=\ell$ and columns $A_j(x)$ for $j=1,...,\ell$. Further, define the functionals
\[
D_{x,j}(\xi)\;:=\; \left\langle A_j(x),\nabla \xi(x)\right\rangle, \qquad \xi\in \Hk,\quad j=1,\dots,\ell.
\]
Then,  the family
\begin{align}\label{eq:setFunctionals}
 \{\delta_x(\,\cdot\,)\}_{x\in \Omega}\ \cup\ \{D_{x,j}(\,\cdot\,)\}_{x\in \Omega,\ j=1,\dots,\ell}
\end{align}
is linearly independent in $\mathcal{H}_k(\Omega)'$.
\end{lemma}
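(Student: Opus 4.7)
The plan is to reduce the claim to Assumption \ref{ass:LinearInd} by expanding each directional functional $D_{x,j}$ into the coordinate partial-derivative evaluations $\delta_x\circ\partial^{s}$ via the entries of $A$, and then invoking the full column rank of $A(x)$.

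To carry this out, I take an arbitrary finite vanishing combination
\[
\sum_{i=1}^{m} c_i\,\delta_{y_i} \;+\; \sum_{p=1}^{q}\sum_{j=1}^{\ell} d_{p,j}\,D_{z_p,j} \;=\;0\qquad\text{in }\mathcal{H}_k(\Omega)',
\]
and, without loss of generality, assume that $y_1,\ldots,y_m$ are pairwise distinct and, independently, that $z_1,\ldots,z_q$ are pairwise distinct. Using $D_{z_p,j}(\xi)=\sum_{s=1}^{N}A_{sj}(z_p)\,\partial^{s}\xi(z_p)$, with $A_{sj}(z_p)$ denoting the $(s,j)$-entry of $A(z_p)$, I collect terms to rewrite the equation as
\[
\sum_{i=1}^{m} c_i\,\delta_{y_i} \;+\; \sum_{p=1}^{q}\sum_{s=1}^{N}\gamma_{p,s}\,(\delta_{z_p}\circ\partial^{s}) \;=\; 0,\qquad \gamma_{p,s}:=\sum_{j=1}^{\ell} d_{p,j}\,A_{sj}(z_p).
\]
All functionals appearing here belong to the family \eqref{eq:linearInWend}: the $\delta_{y_i}$ are pairwise distinct, the $\delta_{z_p}\circ\partial^{s}$ are pairwise distinct across $(p,s)$, and a point evaluation cannot coincide with any partial-derivative evaluation (they already disagree on a suitable test function in the RKHS). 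Hence Assumption \ref{ass:LinearInd} forces $c_i=0$ for all $i$ and $\gamma_{p,s}=0$ for all $(p,s)$. The vanishing of $\gamma_{p,s}$ for $s=1,\ldots,N$ is exactly $A(z_p)\,d_p=0$ with $d_p:=(d_{p,1},\ldots,d_{p,\ell})^{\!\top}\in\mathbb{R}^{\ell}$. Since $\operatorname{rank}A(z_p)=\ell$, the matrix $A(z_p)$ represents an injective linear map $\mathbb{R}^{\ell}\to\mathbb{R}^{N}$, forcing $d_p=0$, which concludes the proof.

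The argument is essentially bookkeeping; the only mildly delicate point, and what I expect to be the main obstacle, is ensuring that the rewritten expression is a valid finite linear combination of pairwise distinct elements of \eqref{eq:linearInWend} before invoking its linear independence. In particular, one must verify that $\delta_x$ and $\delta_x\circ\partial^{s}$ are genuinely distinct members of this family at the same point $x$, which is immediate by exhibiting an RKHS element that vanishes at $x$ but has nonzero $s$-th partial derivative there. Once that is settled, the rank hypothesis on $A$ does the rest.
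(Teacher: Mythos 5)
Your proof is correct and follows essentially the same route as the paper's: expand each directional functional $D_{x,j}$ into the coordinate partial-derivative evaluations via the entries of $A$, invoke Assumption~\ref{ass:LinearInd}, and use the full column rank of $A(x)$ to conclude that the directional coefficients vanish (the paper phrases this contrapositively, as a proof by contradiction, but the content is identical). Your extra check that $\delta_x$ and $\delta_x\circ\partial^{s}$ are distinct is harmless but already subsumed by the stated linear independence of the family \eqref{eq:linearInWend}.
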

\begin{proof}
By assumption
$
    \bigl\{\delta_{x}(\,\cdot\,)\bigr\}_{x\in\Omega}
    \cup
    \bigl\{\delta_{x}(\,\cdot\,)  \circ \partial^{s}(\,\cdot\,)\bigr\}_{x\in\Omega,\,s=1,\dots,N}
$
forms a linearly independent family in $\mathcal{H}_k(\Omega)'$.
Assume, for contradiction, that \eqref{eq:setFunctionals} is linearly
dependent. Then there exist distinct points
$
    \{x_{1},\dots,x_{n}\}\subset \Omega,$ $
    \{\tilde{x}_{1},\dots,\tilde{x}_{\tilde{n}}\}\subset \Omega
$
and a nonzero vector
\[
    \bigl[a^{\top},\,\tilde{a}_1^{\top},\dots,\tilde{a}_{\tilde{n}}^{\top}\bigr]^{\top}
    \in \mathbb{R}^{\,n+\ell \cdot \tilde{n}}
\]
such that
\[
   0 = \sum_{i=1}^{n} a_{i}\,\delta_{x_{i}}(\,\cdot\,)
    +
    \sum_{q=1}^{\tilde{n}} \sum_{j=1}^{\ell} (\tilde{a}_{q})_{j}\,D_{\tilde{x}_q,j}(\,\cdot\,)
    = \sum_{i=1}^{n} a_{i}\,\delta_{x_{i}}(\,\cdot\,)
    +
    \sum_{q=1}^{\tilde{n}} \sum_{s=1}^{N}  (b_{q})_{s}\,\delta_{\tilde{x}_q}(\,\cdot\,)  \circ\partial^{s}(\,\cdot\,),
    \quad
    b_{q}:= A(\tilde{x}_q)\,\tilde{a}_q .
\]
This contradicts the linear independence in \eqref{eq:linearInWend}, since,
by the rank condition $\operatorname{rank} A(x)=\ell$ for all $x \in \Omega$,
we have
\[
\bigl[a^{\top},\,\tilde{a}_1^{\top},\dots,\tilde{a}_{\tilde{n}}^{\top}\bigr]^{\top}\neq 0
\quad\Longrightarrow\quad
\bigl[a^{\top},\,b_{1}^{\top},\dots,b_{\tilde{n}}^{\top}\bigr]^{\top}\neq 0 .
\]
Therefore, \eqref{eq:setFunctionals} is linearly independent.
\end{proof}

\noindent The results presented so far provide a rigorous framework for implementing
PI in RKHSs as a
numerical method for approximating the OVF. 
The numerical approximation ansatz introduced later is algorithmically 
equivalent to executing PI in an RKHS, where the policy-evaluation step is realized by solving the generalized interpolation problem described above. 
The complete procedure is summarized in Algorithm~\ref{algo:PI}, which we henceforth refer to as {RKHS–PI}.
\begin{algorithm}
\caption{RKHS policy iteration}\label{algo:PI}
\begin{flushleft}
\textbf{Input:}\\[2pt]
\hspace*{1.5em}initial feedback \(u_{0}\);\; convergence tolerance \(\varepsilon>0\); \;  set of pairwise distinct centers $X_n=\{x_1,...,x_n\} \subset \Omega \setminus \{0\}$\\[6pt]

\textbf{Initialisation:}\\[2pt]
\hspace*{1.5em}%
\(s_{v_{-1}}^n(x)\equiv 0,\;
e_{0}:=\varepsilon+1,\;
\eta:= 0\) \\[6pt]

\textbf{Main loop:}\\[2pt]
\textbf{while} \(e_{\eta}>\varepsilon\)\ \textbf{do}\\
\hspace*{1.5em}%
\textbf{(1)}\; solve the generalized minimal-norm interpolation problem
\eqref{eq:minNormGHJB1}–\eqref{eq:minNormGHJB2} on $X_n$ for $s_{v_\eta}^n(x)$;
\\[4pt]
\hspace*{1.5em}%
\textbf{(2)}\; update the feedback law
$
u_{\eta+1}(x)\;:=\;-\tfrac12\,R^{-1}g(x)^{\!\top}\nabla s_{v_\eta}^n(x);
$\\[4pt]
\hspace*{1.5em}%
\textbf{(3)}\; compute the residual  
\(e_{\eta+1}\;:=\; \max_{x \in X_n} \left| s_{v_\eta}^n(x)-s_{v_{\eta-1}}^n(x)\right|;\)\\[4pt]
\hspace*{1.5em}%
\textbf{(4)}\; increment the counter \(\eta\;:=\;\eta+1\).\\
\textbf{end while}\\[6pt]

\textbf{Output:}\\[2pt]
\hspace*{1.5em}%
approximate OVF \(s_{v_{\eta-1}}^n\).
\end{flushleft}
\end{algorithm}
\noindent A natural question is whether Step~\textbf{(1)} is well-posed at every
iteration, i.e., whether the associated generalized Gramian matrix (built from the
linear functionals appearing in \eqref{eq:minNormGHJB1}–\eqref{eq:minNormGHJB2})
is nonsingular. Equivalently, the involved functionals must be linearly
independent on the RKHS. The next result provides sufficient conditions
ensuring that Algorithm~\ref{algo:PI} is well defined.

\begin{theorem}[Well-posedness of the RKHS policy-evaluation step]\label{theo:WellPI}
Suppose Assumption~\ref{as:data} holds and the kernel
\(k:\Omega\times\Omega\to\mathbb{R}\) satisfies
Assumption~\ref{ass:LinearInd}. Consider a set of pairwise distinct centers 
\(X_{n}=\{x_{1},\dots,x_{n}\}\subset\Omega\setminus\{0\}\) and the
corresponding initial feedback values
\(\{u_{0}(x_{1}),\dots,u_{0}(x_{n})\}\subset\mathbb{R}^{M}\).
If
\[
  f(x_{i})+g(x_{i})\,u_{0}(x_{i}) \;\neq\; 0,
  \qquad i=1,\dots,n,
\]
then, for every iteration index \(\eta\ge 0\), the generalized minimal-norm
interpolation problem in Step~\textbf{(1)} of Algorithm~\ref{algo:PI}
(see \eqref{eq:minNormGHJB1}–\eqref{eq:minNormGHJB2}) admits a unique solution.
\end{theorem}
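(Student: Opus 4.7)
The plan is to reduce well-posedness of Step~\textbf{(1)} to linear independence of the interpolation functionals, and then establish that independence by an induction on $\eta$ that couples the GHJB identity obtained at iteration $\eta$ to the Hamiltonian-minimizer structure of the policy update.

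First I would observe that the generalized minimal-norm interpolation problem \eqref{eq:minNormGHJB1}--\eqref{eq:minNormGHJB2} at iteration $\eta$ involves the $n+1$ continuous linear functionals
\[
\lambda_0(\xi) := \xi(0), \qquad \lambda_j(\xi) := \langle f(x_j)+g(x_j)u_\eta(x_j),\,\nabla \xi(x_j)\rangle,\quad j=1,\dots,n,
\]
which lie in $\mathcal{H}_k(\Omega)'$ by Lemma~\ref{lem:point-and-grad-eval}. As noted in Section~\ref{subsec:lin_opt_rec}, the problem admits a unique solution whenever these functionals are linearly independent. The proof therefore reduces to showing that the direction vectors $d_j^{\eta}:=f(x_j)+g(x_j)u_\eta(x_j)$ are nonzero for all $j=1,\dots,n$ and every $\eta\ge 0$; once this is established, Lemma~\ref{theo:KernelLinFunc} applied with $\ell=1$ and any matrix-valued function $A:\Omega\to\mathbb{R}^{N\times 1}$ of full rank satisfying $A(x_j)=d_j^{\eta}$ (e.g., extended by a fixed nonzero constant vector elsewhere) yields the desired linear independence.

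Next I would proceed by induction on $\eta$. The base case $\eta=0$ is furnished directly by the theorem's hypothesis. For the inductive step, suppose $d_j^{\eta}\neq 0$ for all $j$; then Step~\textbf{(1)} at iteration $\eta$ is uniquely solvable and produces $s_{v_\eta}^n$ satisfying the generalized interpolation conditions, so in particular
\[
\langle d_j^{\eta},\,\nabla s_{v_\eta}^n(x_j)\rangle + h(x_j) + \normW{u_\eta(x_j)}{R}^2 \;=\; 0,\qquad j=1,\dots,n.
\]
The update in Step~\textbf{(2)} selects $u_{\eta+1}(x_j)$ as the unique minimizer of the pointwise Hamiltonian $u\mapsto \langle f(x_j)+g(x_j)u,\nabla s_{v_\eta}^n(x_j)\rangle + h(x_j) + \normW{u}{R}^2$ (cf.\ \eqref{eq:feedback}), so comparing the minimum value with the value attained at $u=u_\eta(x_j)$ gives
\[
\langle d_j^{\eta+1},\,\nabla s_{v_\eta}^n(x_j)\rangle + h(x_j) + \normW{u_{\eta+1}(x_j)}{R}^2 \;\le\; 0.
\]

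Finally, since each collocation point satisfies $x_j\in\Omega\setminus\{0\}$, the positive definiteness of $h$ from Assumption~\ref{as:data} yields $h(x_j)>0$, and therefore
\[
\langle d_j^{\eta+1},\,\nabla s_{v_\eta}^n(x_j)\rangle \;\le\; -h(x_j) - \normW{u_{\eta+1}(x_j)}{R}^2 \;<\; 0,
\]
which forces $d_j^{\eta+1}\neq 0$ and closes the induction. I expect the main obstacle to be exactly this inductive step: one must propagate a structural nondegeneracy condition from iteration $\eta$ to iteration $\eta+1$ while only having access to the solved GHJB at iteration $\eta$, and the pivotal observation is that the pointwise Hamiltonian cannot increase under the policy-improvement update, so that the strict positivity of the running cost $h$ on $\Omega\setminus\{0\}$ delivers a strictly negative (and in particular nonzero) directional derivative of $s_{v_\eta}^n$ along $d_j^{\eta+1}$.
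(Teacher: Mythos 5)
Your proposal is correct and follows essentially the same route as the paper: reduce well-posedness to linear independence via Lemma~\ref{theo:KernelLinFunc}, handle $\eta=0$ by hypothesis, and rule out $f(x_j)+g(x_j)u_{\eta+1}(x_j)=0$ by combining the GHJB interpolation identity at step $\eta$ with the quadratic-in-$u$ structure and the strict positivity of $h$ on $\Omega\setminus\{0\}$. The paper phrases this as a contradiction argument with an explicit completion of the square, whereas you invoke the Hamiltonian-minimality of the policy update to get $\langle d_j^{\eta+1},\nabla s_{v_\eta}^n(x_j)\rangle\le -h(x_j)-\normW{u_{\eta+1}(x_j)}{R}^2<0$; these are the same computation in different clothing.
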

\begin{proof}
We show that, for all \(\eta\in\mathbb{N}_0\),
\begin{equation}\label{eq:setFunctionals22}
\bigl\{\delta_{0}(\,\cdot\,)\bigr\}\cup
\Bigl\{
\left\langle f(x_{i})+g(x_{i})u_{\eta}(x_{i}), \delta_{x_{i}}  \circ \nabla (\,\cdot\,) \right\rangle
\Bigr\}_{i=1}^{n}
\subset\Hkdual
\end{equation}
is linearly independent.  
Invoking Lemma~\ref{theo:KernelLinFunc} with $\ell = 1$ for all $i=1,..,n$, it suffices to verify
\begin{align}\label{eq:nonNull}
A(x_i) := f(x_{i})+g(x_{i})u_{\eta}(x_{i})\neq 0
\qquad\text{for } i=1,\dots,n.
\end{align}
The case \(\eta=0\) is ensured by the hypothesis.  
Suppose, to the contrary, that there exist \(\eta>0\) and \(i\in\{1,\dots,n\}\)
such that \(f(x_{i})+g(x_{i})u_{\eta}(x_{i})=0\).
Then \(f(x_{i})=-g(x_{i})u_{\eta}(x_{i})\), and the GHJB equation gives
\begin{align*}
-\,h(x_{i})- \normW{u_{\eta-1}(x_{i})}{R}^2 &= \left\langle f(x_{i})+g(x_{i})u_{\eta-1}(x_{i}),\nabla s_{v_{\eta-1}}^{\,n}(x_{i})\right\rangle\\
 &=
\left\langle g(x_{i})(u_{\eta-1}(x_{i})-u_{\eta}(x_{i})),\nabla s_{v_{\eta-1}}^{\,n}(x_{i})\right\rangle.
\end{align*}
Using \(u_{\eta}(x)=-\tfrac12 R^{-1}g(x)^{\!\top}\nabla s_{v_{\eta-1}}^{\,n}(x)\)
and completing the square yields
\begin{align*}
  2\langle u_{\eta}(x_{i})-u_{\eta-1}(x_{i})), R u_{\eta}(x_{i}) \rangle
       &= -\,h(x_{i}) - \normW{ u_{\eta-1}(x_{i})}{R}^2 \\
\Longrightarrow \normW{u_{\eta}(x_{i})}{R}^2+ \normW{u_{\eta}(x_{i})-u_{\eta-1}(x_{i})}{R}^2
&=-\,h(x_{i})<0,
\end{align*}
which contradicts the positive definiteness of \(h\) and the non-negativity
of norms.  Hence  \eqref{eq:nonNull} holds, implying that
\eqref{eq:setFunctionals22} is linearly independent.  
\end{proof}

\noindent Although the well-posedness of Algorithm~\ref{algo:PI} is guaranteed under mild assumptions (cf.\ Theorem~\ref{theo:WellPI}), its convergence analysis is far from straightforward. As noted in the introduction, even with an {exact} solution of
the GHJB equation in each
policy-evaluation step, convergence is highly nontrivial; see the sufficient
conditions in~\cite{ehring2025convergencepolicyiterationinfinitehorizon}.
The additional approximation error introduced by the kernel-collocation
approach further complicates the analysis and cannot  be managed just by the standard arguments proposed in
\cite{beard1995improving,beard1997galerkin,bea1998successive}. \\
\noindent This motivates an alternative analytical viewpoint: we reexamine the algorithm through the lens of the verification-based approximation scheme for the OVF. To that end, we first develop the abstract framework of {nonlinear optimal recovery} in RKHSs next.

\subsection{Nonlinear optimal recovery problems in RKHSs}\label{sec:nonlinearOp}
\noindent The verification conditions for the OVF established in Section~2 comprise both {equality} constraints -- namely, pointwise satisfaction of the HJB equation -- and {inequality} constraints, including positive semidefiniteness and uniform quadratic lower/upper bounds. These features motivate an extension of the nonlinear optimal recovery framework of \cite{chen2021solving}  to accommodate not only equality–type functional constraints but also inequality–type functional constraints. In close analogy with the linear theory, we formulate an abstract setting involving  continuous linear functionals from the dual of the underlying RKHS. The nonlinear variant is also primarily intended for the approximation of solutions to nonlinear PDEs and admits a collocation–type interpretation. Building on the linear methodology, we formulate the problem as follows.

\begin{definition}[Nonlinear optimal recovery in an RKHS]\label{def:NonlinearORP}
Let $\Omega$ be a nonempty set and let $k\colon \Omega \times \Omega \to \R$ be a p.d. kernel with associated RKHS $\mathcal{H}_k(\Omega)$. 
Let $m_p,m_q \in \N_0$ and $n_p,n_q \in \N_0$ satisfy $m_p \ge n_p$ and $m_q \ge n_q$. 
Given maps $p \colon \R^{m_p} \to \R^{n_p}$ and $q \colon \R^{m_q} \to \R^{n_q}$, vectors of continuous linear functionals
\[
\lambda_p \in \bigl(\mathcal{H}_k(\Omega)'\bigr)^{m_p}, 
\qquad
\lambda_q \in \bigl(\mathcal{H}_k(\Omega)'\bigr)^{m_q},
\]
and target vectors $r_p \in \R^{n_p}$ and $r_q \in \R^{n_q}$. The associated nonlinear optimal recovery problem consists in finding
\begin{equation}\label{eq:nonlinear_opt_rec}
 s^* \in \argmin_{s \in \mathcal{H}_k(\Omega)} \left\{  \|s\|_{\mathcal{H}_k(\Omega)}
\;\big|\;
  p\bigl(\lambda_p(s)\bigr) = r_p
  \quad\text{and}\quad
  q\bigl(\lambda_q(s)\bigr) \ge r_q \right\}.
\end{equation}
Here, for a vector of functionals $\lambda = \begin{bmatrix}
    \lambda_1 & \ldots & \lambda_m
\end{bmatrix} \in \bigl(\mathcal{H}_k(\Omega)'\bigr)^m$ (with $m=m_p$ or $m=m_q$) and $s \in \mathcal{H}_k(\Omega)$ we use the componentwise evaluation
\[
\lambda(s) = \begin{bmatrix}
    \lambda_1(s) & \ldots & \lambda_m(s)
\end{bmatrix}^{\top} \in \R^m.
\]
Also the inequality $u \ge v$ for $u,v \in \R^{n_q}$ is understood componentwise.
\end{definition}
\noindent
\noindent At this stage, neither the existence of a {minimizer} of \eqref{eq:nonlinear_opt_rec}  nor a tractable solution strategy for the  infinite-dimensional optimization problem is immediate. Nevertheless, by extending the arguments in \cite{chen2021solving}, we can show the existence of a minimizer and the equivalence to a finite-dimensional optimization problem. 

\begin{theorem}[Finite-dimensional reduction for nonlinear optimal recovery]\label{theo:finiteDimNonlinear}
Given a nonlinear optimal recovery problem according to Definition \ref{def:NonlinearORP} with $p$ and $q$ being additionally continuous. Let
\[
\Lambda \;:=\; \begin{bmatrix}\lambda_p^\top & \lambda_q^\top\end{bmatrix}^\top
\in \bigl(\Hkdual\bigr)^{m_p+m_q}
\]
denote the stacked vector of continuous linear functionals, and assume that the components of $\Lambda$ are linearly independent. 
Assume further that there exists $\bar s\in\Hk$ such that $p(\lambda_p(\bar s))=r_p$ and $q(\lambda_q(\bar s))\ge r_q$. Then
 \eqref{eq:nonlinear_opt_rec} admits a minimizer and 
 it is equivalent to the finite-dimensional problem
\begin{equation}\label{eq:finite_dim_equiv}
  \min_{z_p\in\R^{m_p},\;z_q\in\R^{m_q}} \left \langle
  \begin{bmatrix} z_p \\ z_q \end{bmatrix} , 
  \mathcal{K}_\Lambda^{-1}
  \begin{bmatrix} z_p \\ z_q \end{bmatrix} \right\rangle
  \quad \text{s.t.}\quad
  p(z_p)=r_p,\quad q(z_q)\ge r_q,
\end{equation}
in the following sense:
\begin{itemize}
\item If $(z_p^*,z_q^*)$ solves \eqref{eq:finite_dim_equiv}, then the (unique) minimal-norm interpolant
\[
s^{*}
\;=\;
\argmin_{s\in\Hk}\Bigl\{ \normW{s}{\Hk} \ \Big|\ \lambda_p(s)=z_p^*,\ \lambda_q(s)=z_q^* \Bigr\}
\]
is a minimizer of \eqref{eq:nonlinear_opt_rec}.
\item Conversely, if $s^*$ minimizes \eqref{eq:nonlinear_opt_rec}, then
$z_p^*=\lambda_p(s^*)$ and $z_q^*=\lambda_q(s^*)$ solve \eqref{eq:finite_dim_equiv}.
\end{itemize}
\end{theorem}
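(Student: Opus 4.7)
The plan is to reduce the infinite-dimensional optimization problem to \eqref{eq:finite_dim_equiv} by exploiting the reproducing property, and then to argue existence of a minimizer at the finite-dimensional level via a coercivity/closedness argument.

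First I would introduce the finite-dimensional subspace $V := \operatorname{span}\{w_1,\dots,w_{m_p+m_q}\}\subset\Hk$, where $w_i := \mathcal{R}_{\Hk}\Lambda_i$ are the Riesz representers of the stacked functionals. For any $s\in\Hk$, write the orthogonal decomposition $s = s_V + s_\perp$ with $s_V\in V$ and $s_\perp\in V^\perp$. By the defining property of the Riesz map,
\[
\Lambda_i(s) \;=\; \langle s, w_i\rangle_{\Hk} \;=\; \langle s_V, w_i\rangle_{\Hk} \;=\; \Lambda_i(s_V)
\]
for every $i$, so the constraints $p(\lambda_p(s))=r_p$ and $q(\lambda_q(s))\ge r_q$ depend only on $s_V$. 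Since $\|s\|_{\Hk}^{2} = \|s_V\|_{\Hk}^{2} + \|s_\perp\|_{\Hk}^{2}$, any feasible $s$ can be replaced by its projection $s_V$ without increasing the norm and without violating feasibility. Hence the search may be restricted to $V$.

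Second, because the components of $\Lambda$ are linearly independent, the linear map $s\mapsto\Lambda(s)$ is a bijection from $V$ onto $\R^{m_p+m_q}$. Any $s = \sum_j \alpha_j w_j \in V$ satisfies $\Lambda(s) = \mathcal{K}_\Lambda\,\alpha$ with $\mathcal{K}_\Lambda$ the symmetric positive definite generalized Gramian from Subsection~\ref{subsec:lin_opt_rec}, so $\alpha = \mathcal{K}_\Lambda^{-1}\Lambda(s)$. A direct computation then yields
\[
\|s\|_{\Hk}^{2} \;=\; \alpha^{\top}\mathcal{K}_\Lambda\,\alpha \;=\; \Lambda(s)^{\top}\mathcal{K}_\Lambda^{-1}\Lambda(s),
\]
and splitting $\Lambda(s)$ into the blocks $z_p=\lambda_p(s)$, $z_q=\lambda_q(s)$ recovers the objective of \eqref{eq:finite_dim_equiv}. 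This substitution establishes both reduction statements: given a minimizer $(z_p^{*},z_q^{*})$ of \eqref{eq:finite_dim_equiv}, the unique linear optimal recovery $s^{*}\in V$ with $\lambda_p(s^{*})=z_p^{*}$ and $\lambda_q(s^{*})=z_q^{*}$ (well-defined by the linear theory of Subsection~\ref{subsec:lin_opt_rec}) solves \eqref{eq:nonlinear_opt_rec}; conversely, any minimizer $s^{*}$ of the original problem must lie in $V$ and therefore produces a feasible pair attaining the same optimal value in \eqref{eq:finite_dim_equiv}.

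The remaining obstacle is existence, which I would handle directly at the finite-dimensional level. Continuity of $p$ and $q$ makes the feasible set
\[
F \;:=\; \bigl\{(z_p,z_q)\in\R^{m_p}\times\R^{m_q}\;\big|\;p(z_p)=r_p,\ q(z_q)\ge r_q\bigr\}
\]
closed, and the hypothesis on $\bar s$ guarantees that $(\lambda_p(\bar s),\lambda_q(\bar s))\in F$, so $F$ is nonempty. Since $\mathcal{K}_\Lambda^{-1}$ is symmetric positive definite, the quadratic objective is coercive on $\R^{m_p+m_q}$; consequently the intersection of $F$ with any sublevel set of the objective is closed and bounded, hence compact. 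Weierstrass' theorem then yields a minimizer $(z_p^{*},z_q^{*})$, which lifts via the above construction to a minimizer of \eqref{eq:nonlinear_opt_rec}. The delicate point is that $F$ need not be convex, since $p$ is merely continuous and $q$ need not be concave, so standard convex-analytic existence arguments do not apply; coercivity combined with closedness is the essential tool.
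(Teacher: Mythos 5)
Your proposal is correct and follows essentially the same route as the paper: both reduce to the finite-dimensional program via the minimal-norm identity $\|s_V\|_{\Hk}^2=\Lambda(s)^{\top}\mathcal{K}_\Lambda^{-1}\Lambda(s)$ (you re-derive it by orthogonal projection onto the span of the Riesz representers, the paper cites \eqref{eq:min_norm_value}), and both obtain existence by intersecting the closed feasible set with a compact sublevel set of the positive definite quadratic objective and applying Weierstrass. The only cosmetic difference is that you argue the equivalence by a direct inequality chain while the paper phrases it as two contradiction arguments.
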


\begin{proof}
By linear independence of $\Lambda$, the Gram matrix $\mathcal{K}_\Lambda$ is symmetric positive definite, hence $\mathcal{K}_\Lambda^{-1}$ is symmetric positive definite as well. The feasible function $\bar s$ yields the feasible vector
$\bigl[\lambda_p(\bar s)^\top\ \ \lambda_q(\bar s)^\top\bigr]^\top$ for \eqref{eq:finite_dim_equiv}.
Consider the sublevel set
\[
M_0
\;:=\;
\left\{
 \begin{bmatrix} z_p \\ z_q \end{bmatrix}\in\R^{m_p+m_q}
:\ 
\left \langle \begin{bmatrix} z_p \\ z_q \end{bmatrix} ,
\mathcal{K}_\Lambda^{-1}
\begin{bmatrix} z_p \\ z_q \end{bmatrix} \right \rangle
\le \left\langle
\begin{bmatrix} \lambda_p(\bar s) \\ \lambda_q(\bar s) \end{bmatrix} ,
\mathcal{K}_\Lambda^{-1}
\begin{bmatrix} \lambda_p(\bar s) \\ \lambda_q(\bar s) \end{bmatrix} \right\rangle
\right\}.
\]
Since $\mathcal{K}_\Lambda^{-1}$ is positive definite, $M_0$ is a closed, bounded ellipsoid and hence compact.
By continuity of $p$ and $q$, the sets
\[
M_p:=\left\{\begin{bmatrix} z_p \\ z_q \end{bmatrix}\in\R^{m_p+m_q}: p(z_p)=r_p\right\},\qquad
M_q:=\left\{\begin{bmatrix} z_p \\ z_q \end{bmatrix}\in\R^{m_p+m_q}: q(z_q)\ge r_q\right\}
\]
are closed. Thus, $M:=M_0\cap M_p\cap M_q$ is nonempty by feasibility of $\bar s$ and compact. The quadratic objective in \eqref{eq:finite_dim_equiv} is continuous, hence it attains its minimum on $M$.
\\
Let $(z_p^*,z_q^*)$ be a minimizer of \eqref{eq:finite_dim_equiv}, and let $s^*$ be the associated minimal-norm interpolant defined in the statement. By the standard minimal-norm identity (cf.\ \eqref{eq:min_norm_value}),
\[
\normW{s^*}{\Hk}^2
=
\left \langle \begin{bmatrix} z_p^* \\ z_q^* \end{bmatrix} ,
\mathcal{K}_\Lambda^{-1}
\begin{bmatrix} z_p^* \\ z_q^* \end{bmatrix}\right\rangle.
\]
If there existed $\hat s\in\Hk$ feasible for \eqref{eq:nonlinear_opt_rec} with $\normW{\hat s}{\Hk}<\normW{s^*}{\Hk}$, let $\tilde s$ be the minimal-norm interpolant to the data $\lambda_p(\hat s)$ and $\lambda_q(\hat s)$. Then by the minimal-norm property, it follows
\[
\normW{\tilde s}{\Hk}^2
=
\left\langle \begin{bmatrix} \lambda_p(\hat s) \\ \lambda_q(\hat s) \end{bmatrix},
\mathcal{K}_\Lambda^{-1}
\begin{bmatrix} \lambda_p(\hat s) \\ \lambda_q(\hat s) \end{bmatrix} \right\rangle
\le \normW{\hat s}{\Hk}^2
<
\normW{s^*}{\Hk}^2,
\]
contradicting the optimality of $(z_p^*,z_q^*)$ in \eqref{eq:finite_dim_equiv}.
Hence $s^*$ minimizes \eqref{eq:nonlinear_opt_rec}.\\
Conversely, if $s^*$ minimizes \eqref{eq:nonlinear_opt_rec}, then $(\lambda_p(s^*),\lambda_q(s^*))$ is feasible for \eqref{eq:finite_dim_equiv}. If it were not optimal there, we could repeat the above construction to obtain a feasible function in \eqref{eq:nonlinear_opt_rec} with strictly smaller norm than $s^*$, a contradiction. This establishes both existence and equivalence.
\end{proof}

\noindent
The finite-dimensional program \eqref{eq:finite_dim_equiv} is a strictly convex quadratic optimization problem with (generally nonlinear) constraints and can be handled by standard methods (e.g., Karush-Kuhn-Tucker conditions or interior-point algorithms). In many applications -- most notably for the HJB equation -- the equality constraints can be eliminated explicitly. Suppose that $p$ is {partially affine} in $n_p$ coordinates. This means, after a permutation of coordinates and the functional tuple $\Lambda$, we may write
\begin{equation}\label{eq:resolveStruc}
   p(z_p) \;=\; z_{p,0} \;+\; \tilde p\!\left(z_{\tilde p}\right),
   \qquad
   z_p=\begin{bmatrix} z_{p,0} \\[2pt] z_{\tilde p} \end{bmatrix},
   \quad
   z_{p,0}\in\R^{n_p},\ \ z_{\tilde p}\in\R^{m_p-n_p},
\end{equation}
with $\tilde p\colon \R^{m_p-n_p}\to\R^{n_p}$ continuous. Solving the equality $p(z_p)=r_p$ gives
\begin{equation}\label{eq:solveCon}
   z_{p,0} \;=\; r_p \;-\; \tilde p\!\left(z_{\tilde p}\right).
\end{equation}
With these definitions, the equality constraints in \eqref{eq:finite_dim_equiv} can be resolved explicitly, reducing the problem to
\begin{equation}\label{eq:finite_dim_equiv3}
   \min_{\,z_{\tilde p}\in \R^{m_p-n_p},\; z_q \in \R^{m_q}}
   \left\|
      \begin{bmatrix}
         r_p - \tilde p\!\left(z_{\tilde p}\right) \\[2pt]
         z_{\tilde p} \\[2pt]
         z_q
      \end{bmatrix}
   \right\|_{{\mathcal{K}}_{\Lambda}^{-1}}^{2}
   \quad \text{s.t.} \quad
   q(z_q) \;\ge\; r_q.
\end{equation}
The reduced program \eqref{eq:finite_dim_equiv3} is a nonlinear
{weighted least–squares} problem with inequality constraints. Extending
the approach of \cite{chen2021solving}, we adopt a Gauss-Newton scheme that
linearizes only the objective mapping while {retaining} the inequality
constraints: 
Define the residual mapping
\[
   \rho(z_{\tilde p},z_q)
   \;:=\;
   \begin{bmatrix}
      r_p - \tilde p(z_{\tilde p}) \\[2pt]
      z_{\tilde p} \\[2pt]
      z_q
   \end{bmatrix}
   \in \R^{m_p+m_q}.
\]
Given an iterate $\bigl(z_{\tilde p}^{\mathrm{old}},z_q^{\mathrm{old}}\bigr)$, linearize only the residual $\rho$ while retaining the original inequality constraints:
\begin{equation}\label{eq:finite_dim_equiv4}
   \min_{\Delta z_{\tilde p}\in\R^{m_p-n_p},\;\Delta z_q\in\R^{m_q}}
   \left\|
      \rho\!\left(z_{\tilde p}^{\mathrm{old}},z_q^{\mathrm{old}}\right)
      +
      J_{\rho}\!\left(z_{\tilde p}^{\mathrm{old}},z_q^{\mathrm{old}}\right)
      \begin{bmatrix}\Delta z_{\tilde p}\\[2pt]\Delta z_q\end{bmatrix}
   \right\|_{{\mathcal{K}}_{\Lambda}^{-1}}^{2}
   \quad\text{s.t.}\quad
   q\!\left(z_q^{\mathrm{old}}+\Delta z_q\right)\;\ge\; r_q,
\end{equation}
where the Jacobian of $\tilde p$ is $J_{\tilde p}(z_{\tilde p})\in\R^{n_p\times (m_p-n_p)}$ and
\[
   J_{\rho}\left(z_{\tilde p}^{\mathrm{old}},z_q^{\mathrm{old}}\right)
   \;=\;
   \begin{bmatrix}
      -\,J_{\tilde p}(z_{\tilde p}^{\mathrm{old}}) & 0 \\[2pt]
      I_{m_p-n_p} & 0 \\[2pt]
      0 & I_{m_q}
   \end{bmatrix}
\]
is the Jacobian of $\rho$.
With a computed step $(\Delta z_{\tilde p},\Delta z_q)$ solving \eqref{eq:finite_dim_equiv4}, the update reads
\[
   \begin{bmatrix}
      z_{\tilde p}^{\mathrm{new}} \\[2pt] z_q^{\mathrm{new}}
   \end{bmatrix}
   =
   \begin{bmatrix}
      z_{\tilde p}^{\mathrm{old}} \\[2pt] z_q^{\mathrm{old}}
   \end{bmatrix}
   +
   \begin{bmatrix}
      \Delta z_{\tilde p} \\[2pt] \Delta z_q
   \end{bmatrix}.
\]
The quadratic subproblem \eqref{eq:finite_dim_equiv4} is in fact a {linear} optimal recovery problem (with inequality constraints), obtained by retracing the constructions from \eqref{eq:nonlinear_opt_rec} to \eqref{eq:finite_dim_equiv4}. This is formalized next.
\begin{theorem}[Gauss-Newton step as a linear optimal recovery problem]\label{lem:GaussNewton-vector}
Assume the hypotheses of Theorem~\ref{theo:finiteDimNonlinear}. 
Suppose $p$ admits the partially affine decomposition in \eqref{eq:resolveStruc} with  $\tilde p\in C^{1}\!\bigl(\R^{m_p-n_p},\R^{n_p}\bigr)$. 
Write the vector of functionals accordingly as
\[
  \lambda_p \;=\; \begin{bmatrix} \lambda_{p,0} \\ \lambda_{\tilde p}\end{bmatrix},
  \qquad
  \lambda_{p,0}\in(\Hkdual)^{n_p},\ \ \lambda_{\tilde p}\in(\Hkdual)^{m_p-n_p}.
\]
Given a current RKHS iterate $s^{\mathrm{old}}\in\Hk$, set
\begin{equation}\label{eq:oldz-def-vector}
   z_{\tilde p }^{\mathrm{old}} \;:=\; \lambda_{\tilde p}\bigl(s^{\mathrm{old}}\bigr)\in\R^{m_p-n_p}.
\end{equation}
Define the {linearized equality functionals} and right-hand side
\begin{equation}\label{eq:linfun-vector}
  \lambda^{\mathrm{lin}}_{s^{\mathrm{old}}}
  \;:=\;
  \lambda_{p,0}\;+\; J_{\tilde p}\bigl( z_{\tilde p }^{\mathrm{old}}\bigr)\,\lambda_{\tilde p}
  \ \in (\Hkdual)^{n_p},
  \qquad
  b^{\mathrm{old}}
  \;:=\;
  r_p - \tilde p \bigl( z_{\tilde p }^{\mathrm{old}}\bigr)
        + J_{\tilde p} \bigl( z_{\tilde p }^{\mathrm{old}}\bigr)\,  z_{\tilde p }^{\mathrm{old}}
  \ \in \R^{n_p}.
\end{equation}
Then the Gauss-Newton subproblem \eqref{eq:finite_dim_equiv4} is equivalent to the infinite-dimensional linear optimal recovery problem
\begin{equation}\label{eq:sNewGauss-vector}
 s_{\text{GN}}^*\in \argmin_{s\in\Hk} \left\{ \|s\|_{\Hk}\;\big|\;
  \lambda^{\mathrm{lin}}_{s^{\mathrm{old}}}(s)\;=\;b^{\mathrm{old}}\quad\text{and}\quad
  q\bigl(\lambda_q(s)\bigr)\ \ge\ r_q \right\}
 .
\end{equation}
Moreover, the components of $\lambda^{\mathrm{lin}}_{s^{\mathrm{old}}}$ and $\lambda_q$ are linearly independent. 
\end{theorem}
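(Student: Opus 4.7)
The plan is to reduce the Gauss-Newton subproblem \eqref{eq:finite_dim_equiv4} to an equivalent finite-dimensional program of exactly the same form as the one produced by Theorem~\ref{theo:finiteDimNonlinear}, and then apply that theorem in reverse to lift it to the claimed infinite-dimensional linear optimal recovery problem \eqref{eq:sNewGauss-vector}. A short dualization argument then settles the linear independence claim.

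First, I would switch from the increments $(\Delta z_{\tilde p},\Delta z_q)$ to absolute variables $z_{\tilde p}:=z_{\tilde p}^{\mathrm{old}}+\Delta z_{\tilde p}$ and $z_q:=z_q^{\mathrm{old}}+\Delta z_q$ in \eqref{eq:finite_dim_equiv4}. Substituting these into
\[
\rho\bigl(z_{\tilde p}^{\mathrm{old}},z_q^{\mathrm{old}}\bigr)+J_{\rho}\bigl(z_{\tilde p}^{\mathrm{old}},z_q^{\mathrm{old}}\bigr)\begin{bmatrix}\Delta z_{\tilde p}\\ \Delta z_q\end{bmatrix}
\]
and using the explicit block structure of $J_{\rho}$ shows, after a direct computation and the definition of $b^{\mathrm{old}}$ in \eqref{eq:linfun-vector}, that this linearized residual equals $\bigl[\,b^{\mathrm{old}}-J_{\tilde p}(z_{\tilde p}^{\mathrm{old}})\,z_{\tilde p},\ z_{\tilde p},\ z_q\,\bigr]^{\top}$. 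The inequality $q(z_q^{\mathrm{old}}+\Delta z_q)\ge r_q$ becomes simply $q(z_q)\ge r_q$.

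Next, I would introduce the slack variable $z_{p,0}:=b^{\mathrm{old}}-J_{\tilde p}(z_{\tilde p}^{\mathrm{old}})\,z_{\tilde p}$, which turns the subproblem into
\[
\min_{z_{p,0},\,z_{\tilde p},\,z_q}\ \left\langle\begin{bmatrix}z_{p,0}\\ z_{\tilde p}\\ z_q\end{bmatrix},\mathcal{K}_{\Lambda}^{-1}\begin{bmatrix}z_{p,0}\\ z_{\tilde p}\\ z_q\end{bmatrix}\right\rangle\quad\text{s.t.}\quad z_{p,0}+J_{\tilde p}(z_{\tilde p}^{\mathrm{old}})\,z_{\tilde p}=b^{\mathrm{old}},\quad q(z_q)\ge r_q.
\]
This is exactly the finite-dimensional problem \eqref{eq:finite_dim_equiv} of Theorem~\ref{theo:finiteDimNonlinear} applied to the linear equality map $p^{\mathrm{lin}}(z_p):=z_{p,0}+J_{\tilde p}(z_{\tilde p}^{\mathrm{old}})\,z_{\tilde p}$ together with the unchanged inequality map $q$, with the stacked functional $\Lambda$ unchanged. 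Applying Theorem~\ref{theo:finiteDimNonlinear} in the reverse direction therefore produces the infinite-dimensional linear optimal recovery problem \eqref{eq:sNewGauss-vector}, where the equality constraint reads $\lambda_{p,0}(s)+J_{\tilde p}(z_{\tilde p}^{\mathrm{old}})\,\lambda_{\tilde p}(s)=b^{\mathrm{old}}$, i.e.\ $\lambda^{\mathrm{lin}}_{s^{\mathrm{old}}}(s)=b^{\mathrm{old}}$. Feasibility of the reverse application is inherited from $\bar s$ in Theorem~\ref{theo:finiteDimNonlinear}.

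Finally, for the linear independence claim, suppose $c^{\top}\lambda^{\mathrm{lin}}_{s^{\mathrm{old}}}+d^{\top}\lambda_q=0$ in $\Hkdual$ for some $c\in\R^{n_p}$ and $d\in\R^{m_q}$. Expanding $\lambda^{\mathrm{lin}}_{s^{\mathrm{old}}}$ via \eqref{eq:linfun-vector} yields
\[
c^{\top}\lambda_{p,0}+\bigl(J_{\tilde p}(z_{\tilde p}^{\mathrm{old}})^{\top}c\bigr)^{\top}\lambda_{\tilde p}+d^{\top}\lambda_q=0,
\]
which is a linear combination of the components of $\Lambda=\bigl[\lambda_{p,0}^{\top}\ \lambda_{\tilde p}^{\top}\ \lambda_q^{\top}\bigr]^{\top}$; since these are linearly independent by hypothesis of Theorem~\ref{theo:finiteDimNonlinear}, the block corresponding to $\lambda_{p,0}$ forces $c=0$, and the block corresponding to $\lambda_q$ forces $d=0$, establishing the claim.

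The main obstacle I foresee is ensuring that the algebra in the first step genuinely reproduces the triple $\bigl[b^{\mathrm{old}}-J_{\tilde p}(z_{\tilde p}^{\mathrm{old}})\,z_{\tilde p},\,z_{\tilde p},\,z_q\bigr]^{\top}$ in a manner consistent with the block ordering of $\Lambda=[\lambda_p^{\top}\ \lambda_q^{\top}]^{\top}=[\lambda_{p,0}^{\top}\ \lambda_{\tilde p}^{\top}\ \lambda_q^{\top}]^{\top}$, so that the Gram matrix $\mathcal{K}_{\Lambda}$ used in \eqref{eq:finite_dim_equiv4} is exactly the one needed to invoke Theorem~\ref{theo:finiteDimNonlinear} in reverse; beyond this bookkeeping, the remaining steps are essentially a direct replay of the finite–infinite correspondence already proved in that theorem.
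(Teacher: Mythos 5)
Your proposal is correct and follows essentially the same route as the paper's proof: change of variables from increments to absolute coordinates, reintroduction of the eliminated block $z_{p,0}$ via the affine constraint $z_{p,0}+J_{\tilde p}(z_{\tilde p}^{\mathrm{old}})z_{\tilde p}=b^{\mathrm{old}}$, an appeal to Theorem~\ref{theo:finiteDimNonlinear} in reverse, and the full-row-rank/coefficient argument for linear independence. The only cosmetic difference is that you phrase the independence step by expanding a vanishing linear combination while the paper writes the same fact as a full-row-rank block-matrix identity.
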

\begin{proof}
By a linear change of variables, \eqref{eq:finite_dim_equiv4} is equivalent to minimization in the new variables (omitting the superscript ``new''):
\begin{equation}\label{eq:GN-lin-vec}
  \min_{ z_{\tilde p}\in\R^{m_p-n_p},\; z_q\in\R^{m_q}}\ 
  \bigl\|\rho_{\mathrm{lin}}(z_{\tilde p},z_q)\bigr\|_{{\mathcal{K}}_{\Lambda}^{-1}}^{2}
  \quad\text{s.t.}\quad
  q(z_q)\ge r_q,
\end{equation}
with the {linearized residual}
\[
  \rho_{\mathrm{lin}}(z_{\tilde p},z_q)
  =
  \rho(z_{\tilde p}^{\mathrm{old}},z_q^{\mathrm{old}})
    +  J_{\rho}\left(z_{\tilde p}^{\mathrm{old}},z_q^{\mathrm{old}}\right)\left(
    \begin{bmatrix}z_{\tilde p}\\ z_q\end{bmatrix}
    - 
    \begin{bmatrix}z_{\tilde p}^{\mathrm{old}}\\ z_q^{\mathrm{old}}\end{bmatrix} \right)
  =
  \begin{bmatrix}
    r_p - \tilde p(z_{\tilde p}^{\mathrm{old}})
      + J_{\tilde p}(z_{\tilde p}^{\mathrm{old}})\bigl(z_{\tilde p}^{\mathrm{old}}-z_{\tilde p}\bigr)\\[2pt]
    z_{\tilde p}\\[2pt]
    z_q
  \end{bmatrix}.
\]
Now {reverse} the explicit elimination in \eqref{eq:solveCon},  define
\begin{equation*}
  z_{p,0}
  :=
  r_p - \tilde p\bigl(z_{\tilde p}^{\mathrm{old}}\bigr)
  + J_{\tilde p}\bigl(z_{\tilde p}^{\mathrm{old}}\bigr)\bigl(z_{\tilde p}^{\mathrm{old}}-z_{\tilde p}\bigr)
  \in\R^{n_p}
\end{equation*}
and note that this is equivalent to the {linear} constraint
\begin{equation*}
  z_{p,0}
  + J_{\tilde p}\bigl(z_{\tilde p}^{\mathrm{old}}\bigr)\,z_{\tilde p}
  \;=\;
  r_p - \tilde p\bigl(z_{\tilde p}^{\mathrm{old}}\bigr)
  + J_{\tilde p}\bigl(z_{\tilde p}^{\mathrm{old}}\bigr)\,z_{\tilde p}^{\mathrm{old}}
  \;=\; b^{\mathrm{old}}.
\end{equation*}
Thus, introducing this linear constraint and the new variable in  \eqref{eq:GN-lin-vec} yields the equivalent  finite-dimensional quadratic program
\begin{equation*}
   \min_{ z_{p,0}\in\R^{n_p},\; z_{\tilde p}\in\R^{m_p-n_p},\; z_q\in\R^{m_q}}\ 
  \left\| \begin{bmatrix}
      z_{p,0} \\
      z_{\tilde p} \\ 
      z_q
  \end{bmatrix}\right\|_{{\mathcal{K}}_{\Lambda}^{-1}}^{2}
  \quad\text{s.t.}\quad
  z_{p,0} + J_{\tilde p}(z_{\tilde p}^{\mathrm{old}})\,z_{\tilde p} = b^{\mathrm{old}},
  \ \ q(z_q)\ge r_q .
\end{equation*}
Finally, apply Theorem~\ref{theo:finiteDimNonlinear} and identifying
$
  \lambda^{\mathrm{lin}}_{s^{\mathrm{old}}},
$
gives the equivalence  of \eqref{eq:finite_dim_equiv4} to \eqref{eq:sNewGauss-vector}. \\
Linear independence of the components of $\lambda^{\mathrm{lin}}_{s^{\mathrm{old}}}$ and $\lambda_q$ follows because
\[
  \begin{bmatrix} \lambda^{\mathrm{lin}}_{s^{\mathrm{old}}}\\ \lambda_q \end{bmatrix}
  =
  \begin{bmatrix}
    I_{n_p} & J_{\tilde p}(z_{\tilde p}^{\mathrm{old}}) & 0\\
    0 & 0 & I_{m_q}
  \end{bmatrix}
  \begin{bmatrix} \lambda_{p,0}\\ \lambda_{\tilde p}\\ \lambda_q \end{bmatrix},
\]
and the block matrix on the left has full row rank while
$\{\lambda_{p,0},\lambda_{\tilde p},\lambda_q\}$ is linearly independent by hypothesis.
\end{proof}

\noindent
Recasting the Gauss-Newton subproblem \eqref{eq:finite_dim_equiv4} as the
{infinite–dimensional} linear optimal recovery problem
\eqref{eq:sNewGauss-vector} yields a concrete computational benefit.
The number of linear equality functionals drops from
$m_p$ to $n_p$ (while retaining the $m_q$ inequality constraints), since the original $m_p$ equality functionals are
aggregated into the $n_p$ linearized functionals in
$\lambda^{\mathrm{lin}}_{s^{\mathrm{old}}}$.

\medskip
\noindent
Moreover, by solving the  linear optimal recovery problem
without the inequality constraints
\begin{equation}\label{eq:sNewGauss2}
  s^{\mathrm{new}}
  =
  \operatorname*{arg\,min}_{s \in \Hk} \left\{ \normW{s}{\Hk}
  \;\big|\;
  \lambda^{\mathrm{lin}}_{s^{\mathrm{old}}}(s) \;=\; b^{\mathrm{old}}\right\},
\end{equation}
one also solves the constrained Gauss-Newton subproblem
\eqref{eq:sNewGauss-vector} provided the inequality constraints are satisfied
at $s^{\mathrm{new}}$, i.e.,
\begin{equation}\label{eq:sNewGauss3}
  q \bigl(\lambda_q(s^{\mathrm{new}})\bigr) \;\ge\; r_q.
\end{equation}
Indeed, any feasible point of \eqref{eq:sNewGauss-vector} satisfies the same
linearized equalities as in \eqref{eq:sNewGauss2}; since
$s^{\mathrm{new}}$ has minimal $\Hk$-norm among {all} functions
satisfying these equalities, it is also minimal over the subset that additionally
obeys the inequalities, hence solves \eqref{eq:sNewGauss-vector} whenever
\eqref{eq:sNewGauss3} holds.

\medskip
\noindent
From a computational viewpoint, \eqref{eq:sNewGauss2} requires solving only an
$n_p\times n_p$ linear system (the Gram matrix associated with the functionals in $\lambda^{\mathrm{lin}}_{s^{\mathrm{old}}}$), while checking its validity by \eqref{eq:sNewGauss3}.
This strategy is particularly effective when the minimizer has no active inequality constraints and the iteration is initialized sufficiently close to that minimizer. In this regime, the classical local convergence theory for Gauss-Newton applies to the procedure that iteratively computes \eqref{eq:oldz-def-vector} and solves \eqref{eq:sNewGauss2}; see \cite{deuflhard1979affine,chen2005convergence,ferreira2011local} for precise assumptions and rates.\\
\noindent Theorem~\ref{lem:GaussNewton-vector} is the reason for the fact that the RKHS-based PI
scheme of the previous subsection is {algorithmically equivalent} to the
practical solution ansatz for the verification–based approximation of the
OVF. The latter is introduced next, building on the
nonlinear optimal recovery framework developed in the current subsection.

\section{Optimal Recovery of the Optimal Value Function}
\label{sec:opt_rec_vf}
In this section, we instantiate the nonlinear optimal recovery framework developed above for the verification conditions stated in Corollaries~\ref{coro:VerificAnalytic} and~\ref{coro:VerificConstant}. This leads in our case to two distinct optimal recovery formulations: First, the OVF $v^{*}$ is real-analytic on $\Omega$, second, $v^{*}$ satisfies uniform quadratic lower and upper bounds on the domain. Throughout the presentation, we adopt the standing assumption that the unknown OVF $v^{*}$ belongs to the RKHS $\mathcal{H}_k(\Omega)$ used for the approximation.

\subsection{Optimal Recovery of an Analytic OVF}

In this subsection we assume that the OVF $v^{*}$ is
real–analytic on $\Omega$, so that Corollary~\ref{coro:VerificAnalytic}
applies. In particular, the admissible approximants must also be analytic.
We therefore choose an RKHS consisting of analytic functions. A canonical
example is the Gaussian kernel, whose associated
RKHS is composed of real–analytic functions; see \cite{steinwart2008support}.

\medskip

\noindent
Given a p.d. kernel $k$ with analytic RKHS and a finite set of pairwise
distinct points $X_{n}=\{x_{1},\ldots,x_{n}\}$,
we pose the following nonlinear optimal recovery problem 
\begin{equation}\label{eq:minProblem}
\min_{v \in \Hk}   \|v\|_{\Hk}
 \text{ s.t. } \begin{cases}  
 \langle f(x_i), \nabla v(x_i) \rangle
  - \frac{1}{4} \normW{ g(x_i)^{\top}\, \nabla v(x_i)}{R^{-1}}^2  + h(x_i) = 0,  & i=1,\ldots,n,\\
 v(x_i) \ge 0, & i=1,\ldots,n,\\
 v(0) = 0.
\end{cases}
\end{equation}
\noindent
To cast \eqref{eq:minProblem} in the abstract vectorial framework from
Section~\ref{sec:nonlinearOp}, we first recall that by
Lemma~\ref{lem:point-and-grad-eval} both point evaluation $\delta_x\in\Hkdual$
and directional gradient evaluation
$\langle a,\delta_x   \circ\nabla(\,\cdot\,)\rangle\in\Hkdual$
belong to the dual space for every $a\in\R^{N}$ and $x\in\Omega$. Next, we define
\begin{align}
\lambda_{p,0}(\,\cdot\,) &:= \begin{bmatrix}
         \delta_0(\,\cdot\,) & \langle f(x_1), \delta_{x_1}   \circ \nabla (\,\cdot\,) \rangle & 
         \cdots & \langle f(x_n), \delta_{x_n}   \circ \nabla (\,\cdot\,) \rangle
    \end{bmatrix}^{\top}, \notag  \\
    \lambda_{\tilde p}(\,\cdot\,) &:= \big[ 
           \langle g_1(x_1), \delta_{x_1}  \circ \nabla (\,\cdot\,) \rangle \;\;\; \cdots \;\;\;  \langle g_M(x_1), \delta_{x_1}   \circ \nabla (\,\cdot\,) \rangle \notag\\
           & \quad \;\;\; \cdots \;\;\;  \langle g_1(x_n), \delta_{x_n}   \circ \nabla (\,\cdot\,) \rangle \;\;\; \cdots \;\;\;  \langle g_M(x_n), \delta_{x_n}   \circ \nabla (\,\cdot\,) \rangle
    \big]^{\top}, \notag \\
    \lambda_p(\,\cdot\,) &:= \begin{bmatrix}
        \left(\lambda_{p,0}(\,\cdot\,)\right)^{\top} &  \left(\lambda_{\tilde p}(\,\cdot\,)\right)^{\top}
    \end{bmatrix}^{\top} \in \left( \Hkdual \right)^{1+n+nM}, \label{eq:lamPCon} \\
    \lambda_q(\,\cdot\,) &:= \begin{bmatrix}
        \delta_{x_1}(\,\cdot\,) &   \cdots &   \delta_{x_n}(\,\cdot\,)
    \end{bmatrix}^{\top} \in \left( \Hkdual \right)^{n}, \label{eq:lamQCon}
\end{align}
where we denote by $g_1(x_i),...,g_M(x_i)$ the columns of $g(x_i)$.   Furthermore, we set 
\begin{align}
    p(\lambda_p(s)) &:= \begin{bmatrix}
         s(0) & \langle f(x_1),   \nabla s(x_1) \rangle & 
         \cdots & \langle f(x_n),  \nabla s(x_n) \rangle
    \end{bmatrix}^{\top} + \tilde p (\lambda_{\tilde p}(s)), \label{eq:pCon}\\
    \tilde p (\lambda_{\tilde p}(s)) &:= \begin{bmatrix}
         0 & - \frac{1}{4} \normW{ g(x_1)^{\top}\, \nabla s(x_1)}{R^{-1}}^2 & 
         \cdots & - \frac{1}{4} \normW{ g(x_n)^{\top}\, \nabla s(x_n)}{R^{-1}}^2
    \end{bmatrix}^{\top}, \label{eq:tildepCon}\\
    r_p &:= \begin{bmatrix}
         0 & -h(x_1) & 
         \cdots & -h(x_n)
    \end{bmatrix}^{\top},\label{eq:tildepCon2}\\
    q(\lambda_q(s)) &:= \begin{bmatrix}
         s(x_1) & 
         \cdots & s(x_n)
    \end{bmatrix}^{\top}, \notag\\
        r_q &:= \begin{bmatrix}
         0 &   
         \cdots & 0
    \end{bmatrix}^{\top}. \notag
\end{align}
 With this notation, \eqref{eq:minProblem}
can be written in the abstract form
\begin{gather}\label{eq:minProblem0}
\min_{v \in \Hk}    \|v\|_{\Hk} 
 \text{ s.t. } p(\lambda_p(v)) = r_p \quad\text{and}\quad q(\lambda_q(v)) \geq  r_q.
\end{gather}
\noindent For a given center set $X_n$, denote by \[
    \mathcal{M}_{X_{n}} \;:=\;
    \Bigl\{v\in \Hk\;:\; v \text{ solves \eqref{eq:minProblem0} for $X_n$}\Bigr\}
\subset \Hk\] the set of minimizers of the optimal recovery problem~\eqref{eq:minProblem}.
Generally, a main motivation for an optimal recovery ansatz is that it yields a straightforward convergence proof when the set of centers is enlarged so that their union becomes dense in  $\Omega$.
\begin{theorem}\label{tho:OptRecovAna}
Assume that Assumption~\ref{as:data} holds, let $\Omega\subset\mathbb{R}^{N}$ be a domain containing the origin, and let $(X_n)_{n\in\mathbb{N}}\subset \Omega \setminus \{0 \}$ be a nested sequence of finite subsets whose union is dense in $\Omega$, i.e.,
\[
\cl{\bigcup_{n\in\mathbb{N}} X_n} = \Omega .
\]
Suppose the RKHS associated with the kernel $k$ consists of analytic functions,
i.e., $\mathcal{H}_k(\Omega)\subset A(\Omega)$, and that the OVF satisfies $v^{*}\in \mathcal{H}_k(\Omega)$. Then any sequence $(v_n)_{n\in\mathbb{N}}$ with $v_n \in \mathcal{M}_{X_n}$ converges to $v^*$ in the $\Hk$-norm:
\[
\lim_{n\to\infty} \|v_n - v^*\|_{\Hk} = 0 .
\]
\end{theorem}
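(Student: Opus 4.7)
The strategy is the classical optimal-recovery argument: establish feasibility of the target in each subproblem, extract a weakly convergent subsequence, identify its limit by the verification theorem, and upgrade weak to strong convergence via a norm comparison.

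First, I would verify that $v^{*}$ itself is feasible for \eqref{eq:minProblem0} on every $X_n$. Indeed, by Assumption~\ref{as:data} the integrand of $J_\infty$ is nonnegative, so $v^{*}\ge 0$ on $\Omega$, and $f(0)=0$ yields $v^{*}(0)=0$ (taking $\mathbf{u}\equiv 0$). Moreover, since $v^{*}\in C^1(\Omega,\mathbb{R})$, it satisfies the HJB equation \eqref{eq:HBJ1} pointwise on $\Omega$, hence the collocation constraints hold at each $x_i\in X_n$. Therefore, by optimality of $v_n\in\mathcal{M}_{X_n}$,
\[
\|v_n\|_{\Hk}\ \le\ \|v^{*}\|_{\Hk}\qquad\forall\,n\in\mathbb{N}.
\]

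Next, since $(v_n)$ is bounded in the Hilbert space $\Hk$, the Banach-Alaoglu theorem gives a weakly convergent subsequence $v_{n_j}\rightharpoonup v_\infty\in\Hk$. By the reproducing property and Lemma~\ref{lem:point-and-grad-eval}, both point evaluation $\delta_x$ and directional gradient evaluation $\langle a,\delta_x \circ \nabla(\cdot)\rangle$ are continuous linear functionals on $\Hk$, so for every $x\in\Omega$,
\[
v_{n_j}(x)\to v_\infty(x),\qquad \nabla v_{n_j}(x)\to \nabla v_\infty(x).
\]
Fix any $x\in\bigcup_m X_m$; then $x\in X_{n_j}$ for all large $j$, so the collocation constraints are active, and passing to the limit -- using continuity of $f,g,h$ and of the quadratic map $y\mapsto \|g(x)^\top y\|_{R^{-1}}^{2}$ -- yields the HJB identity at $x$ and $v_\infty(x)\ge 0$. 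The constraint $v_\infty(0)=0$ follows likewise from $\delta_0(v_{n_j})=0$.

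The main obstacle, and the crucial step, is to propagate these conditions from the collocation nest to the whole domain. Here I would exploit continuity: $v_\infty$ and $\nabla v_\infty$ are continuous on $\Omega$ (since $\Hk\subset A(\Omega)$), as are $f,g,h$, so both the HJB residual $x\mapsto \langle f(x),\nabla v_\infty(x)\rangle - \tfrac14\|g(x)^\top\nabla v_\infty(x)\|_{R^{-1}}^{2}+h(x)$ and the map $x\mapsto v_\infty(x)$ are continuous. Since they vanish (resp.\ are nonnegative) on the dense set $\bigcup_m X_m$, they vanish (resp.\ are nonnegative) on all of $\Omega$. Consequently, $v_\infty$ meets the hypotheses of Corollary~\ref{coro:VerificAnalytic} (using $v_\infty\in\Hk\subset A(\Omega)$ and $v^{*}\in A(\Omega)$ by assumption), which forces $v_\infty\equiv v^{*}$ on $\Omega$.

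Finally, this identification is independent of the chosen subsequence, so the whole sequence satisfies $v_n\rightharpoonup v^{*}$ in $\Hk$. Combining weak lower semicontinuity of the norm with the bound $\|v_n\|_{\Hk}\le\|v^{*}\|_{\Hk}$ gives
\[
\|v^{*}\|_{\Hk}\ \le\ \liminf_{n\to\infty}\|v_n\|_{\Hk}\ \le\ \limsup_{n\to\infty}\|v_n\|_{\Hk}\ \le\ \|v^{*}\|_{\Hk},
\]
hence $\|v_n\|_{\Hk}\to\|v^{*}\|_{\Hk}$. In a Hilbert space, weak convergence together with convergence of norms implies strong convergence, so $\|v_n-v^{*}\|_{\Hk}\to 0$, which completes the proof.
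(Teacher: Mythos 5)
Your proposal is correct and follows essentially the same route as the paper's proof: feasibility of $v^{*}$ gives the uniform bound $\|v_n\|_{\Hk}\le\|v^{*}\|_{\Hk}$, weak compactness plus continuity of the point- and gradient-evaluation functionals identifies every weak limit via density and Corollary~\ref{coro:VerificAnalytic}, and norm convergence upgrades weak to strong convergence. The only cosmetic difference is that you invoke weak lower semicontinuity of the norm where the paper uses monotonicity of $\|v_n\|_{\Hk}$ from the nestedness of the $X_n$; both yield the same conclusion.
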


\begin{proof}
The argument follows \cite{Bacchetta2025} with minor modifications.\\
\textbf{Step 1 (uniform boundedness).}
Because $v^{*}$ is admissible for every discrete problem, minimality yields
\(
\|v_n\|_{\Hk}\le\|v^{*}\|_{\Hk}
\)
for all $n \in \mathbb{N}$.  Hence $(v_n)_{n \in \mathbb{N}}$ is contained in the closed ball of radius
$\|v^{\ast}\|_{\Hk}$; by Banach–Alaoglu this ball is weakly compact in the Hilbert space $\Hk$ and therefore $(v_n)_{n \in \mathbb{N}}$ possesses weak accumulation points.\\
\noindent \textbf{Step 2 (identification of every weak limit).}
Let $\tilde v$ be an arbitrary weak limit of a subsequence, i.e., $v_{n_\ell}\rightharpoonup\tilde v$.  
For each $x \in \Omega$, $\delta_{x}(\,\cdot\,)   $ and $\delta_{x}(\,\cdot\,)  \circ\partial^{s}(\,\cdot\,)$ for $s=1,...,N$   are continuous linear functionals on $\Hk$; thus
\[
v_{n_\ell}(x)\to\tilde v(x), 
\quad 
\nabla v_{n_\ell}(x)\to\nabla\tilde v(x)\;\; \text{ for } \;\; \ell \rightarrow \infty.
\]
Fix $x\in\bigcup_{n\in \mathbb{N}}X_n$ and choose $n_0$ with $x\in X_{n_0}$.  
The discrete constraints imposed on $v_{n_\ell}$ give
\[
\langle f(x),\nabla  v_{n_\ell}(x)\rangle- \frac{1}{4}\normW{g(x)^{\top}\nabla  v_{n_\ell}(x)}{R^{-1}}^2+h(x)=0,\quad
  v_{n_\ell}(x)\ge0,\quad
  v_{n_\ell}(0)=0.
\]
for $n_\ell>n_0$. Thus, passing to the limit and using the continuity of the norm and scalar product, we obtain
\[
\langle f(x),\nabla\tilde v(x)\rangle- \frac{1}{4}\normW{g(x)^{\top}\nabla\tilde v(x)}{R^{-1}}^2
+h(x)=0,\quad
\tilde v(x)\ge0,\quad
\tilde v(0)=0.
\]
By density of $\bigcup_{n\in \mathbb{N}}X_n$ and continuity of the terms involved, the same hold for every $x\in\Omega$.  
Since $\tilde v\in \Hk \subset A(\Omega)$, Corollary~\ref{coro:VerificAnalytic} implies $\tilde v=v^{*}$ on~$\Omega$.  
Thus {every} weak limit of $(v_n)_{n \in \mathbb{N}}$ coincides with $v^{*}$; consequently
\(v_n\rightharpoonup v^{*}\) in $\Hk$.\\
\noindent \textbf{Step 3 (upgrade to strong convergence).}
Since $v_{n+1}$ is also in the feasibility set for the minimal-norm problem corresponding to $X_n$, we have
\begin{align*}
     \Vert v_n \Vert_{\Hk} \leq \Vert v_{n+1} \Vert_{\Hk} \leq \Vert v^* \Vert_{\Hk}  \quad\text{for all } n \in \mathbb{N},
\end{align*}
so   $\left(\Vert v_n \Vert_{\Hk}\right)_{n \in \mathbb{N}}$ is a monotone increasing and bounded sequence, and therefore convergent. Furthermore, weak convergence implies
$$\langle v^* ,w \rangle_{\Hk} = \lim_{n \rightarrow \infty} \langle v_n ,w \rangle_{\Hk} \leq \Vert  w \Vert_{\Hk} \lim_{n \rightarrow \infty} \Vert  v_n \Vert_{\Hk}, \qquad \forall w \in \Hk$$
which, together with the norm bound, gives
\[
\|v^{\ast}\|_{\Hk} = \sup_{\Vert  w \Vert=1} | \langle v^* ,w \rangle_{\Hk}| 
\le\lim_{n\to\infty}\|v_n\|_{\Hk}
\le\|v^{\ast}\|_{\Hk},
\]
so $\|v_n\|_{\Hk}\to\|v^{\ast}\|_{\Hk}$.
Finally,
\[
\|v_n-v^{\ast}\|_{\Hk}^{2}
=\|v_n\|_{\Hk}^{2}-2\langle v_n,v^{\ast}\rangle_{\Hk}+\|v^{\ast}\|_{\Hk}^{2}\xrightarrow[n\to\infty]{}0,
\]
because the middle term converges by weak convergence, and the first terms converge as shown before.  Hence $v_n\to v^{\ast}$ strongly in $\Hk$.

\end{proof}
\subsection{Optimal recovery of an OVF under quadratic bounds}
\label{subsec:opt-recovery-quadratic}
\noindent Theorem~\ref{tho:OptRecovAna} ensures convergence under the strong assumption
$v^{*}\in\mathcal{H}_{k}(\Omega)\subset A(\Omega)$ with dense sampling. Since
analyticity is often unclear, we now adopt the two-sided quadratic bounds
of Corollary~\ref{coro:VerificConstant}, but obtain only {local} convergence. So we consider an OVF \(v^{*}\) that satisfies 
\[
  \alpha^{*}\,\|x\|^{2}\ \le\ v^{*}(x)\ \le\ \beta^{*}\,\|x\|^{2},
  \qquad x\in\Omega,
\]
which is Assumption \ref{as:data2}. Throughout, fix constants \(0<\alpha\le \alpha^{*}\) and \(\beta^{*}\le \beta\). For pairwise distinct sampling sites
\(X_{n}:=\{x_{1},\dots,x_{n}\}\subset \Omega\setminus\{0\}\), and a p.d. kernel \(k\) with RKHS \(\mathcal{H}_{k}(\Omega)\), we pose
the following nonlinear optimal recovery problem:
\begin{equation}
\label{eq:minProblem2}
\min_{v \in \Hk}   \|v\|_{\Hk}
 \text{ s.t. } \begin{cases}  
 \langle f(x_i), \nabla v(x_i) \rangle
  - \frac{1}{4}\normW{g(x_i)^{\top} \nabla v(x_i)}{R^{-1}}^2 + h(x_i) = 0,  & i=1,\ldots,n,\\
\beta \Vert x_i \Vert^2 \ge v(x_i) \ge \alpha \Vert x_i \Vert^2, & i=1,\ldots,n,\\
 v(0) = 0.
\end{cases}
\end{equation}
Again, also problem \eqref{eq:minProblem2} can be expressed abstractly as
\begin{gather}\label{eq:minProblem20}
\min_{v \in \Hk}    \|v\|_{\Hk} 
 \text{ s.t. } p(\lambda_p(v)) = r_p \quad\text{and}\quad r_{q,u} \geq q(\lambda_q(v)) \geq  r_{q,l}
\end{gather}
using the definitions \eqref{eq:pCon}--\eqref{eq:tildepCon2} and  
\begin{align*}
    r_{q,u} &:= \begin{bmatrix}
         \beta \Vert x_1 \Vert^2 &  
         \cdots & \beta \Vert x_n \Vert^2
    \end{bmatrix}^{\top}\\
  r_{q,l}  &:= \begin{bmatrix}
         \alpha \Vert x_1 \Vert^2 &  
         \cdots & \alpha \Vert x_n \Vert^2
    \end{bmatrix}^{\top}.
\end{align*}
The corresponding set of admissible minimizers is
\[
    \mathcal{M}_{X_{n},\alpha,\beta} \;:=\;
    \Bigl\{v\in \Hk\;:\; v \text{ solves \eqref{eq:minProblem20} for $X_n$}\Bigr\}.
\]
As in the analytic case, convergence can be established, though only on a suitable subdomain of~\(\Omega\).
\begin{theorem}\label{thm:convergence-quadratic}
Let \(\Omega\subset\mathbb{R}^{N}\) be a bounded domain containing the origin, let Assumption \ref{as:data} hold and
 fix numbers \(0<\alpha\le \alpha^{*}\) and \(\beta^{*}\le \beta\).
Let \(\bigl(X_{n}\bigr)_{n\in\mathbb{N}}\subset\Omega\setminus\{0\}\) be a nested family of finite sets with dense union,
\(\cl{\bigcup_{n\in\mathbb{N}}X_{n}}=\Omega\).
Suppose, moreover, that for the p.d. kernel \(k\) under consideration the OVF \(v^{*}\) belongs to \(\Hk\).
Then there exists a subdomain \(\tilde{\Omega}\subset\Omega\) containing the origin such that, for any sequence \(\bigl(v_{n}\bigr)_{n\in\mathbb{N}}\) with \(v_{n}\in \mathcal{M}_{X_{n},\alpha,\beta}\),
\[
  \lim_{n\to\infty}\,\|v_{n}-v^{*}\|_{\mathcal{H}_{k}(\tilde{\Omega})}\;=\;0.
\]
\end{theorem}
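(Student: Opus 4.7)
The plan is to adapt the three-step proof of Theorem~\ref{tho:OptRecovAna}, replacing the global identification of weak limits via Corollary~\ref{coro:VerificAnalytic} with the local identification via Corollary~\ref{coro:VerificConstant}, and then restricting to $\tilde{\Omega}$ for the convergence statement. First, I would verify that $v^{*}$ is feasible for every discrete problem~\eqref{eq:minProblem2}: Assumption~\ref{as:data2} together with $0<\alpha\le\alpha^{*}$ and $\beta^{*}\le\beta$ ensures that the two-sided inequality constraints hold, and $v^{*}$ satisfies the HJB identity pointwise with $v^{*}(0)=0$. Minimality then yields $\normW{v_n}{\mathcall{H}_k(\Omega)}\le\normW{v^{*}}{\mathcall{H}_k(\Omega)}$ for all $n$, and Banach--Alaoglu furnishes weak accumulation points of $(v_n)$ in $\mathcall{H}_k(\Omega)$.

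For any such weak subsequential limit $\tilde v$, continuity of the point- and gradient-evaluation functionals (Lemma~\ref{lem:point-and-grad-eval}) together with the density of $\bigcup_n X_n$ in $\Omega$ lets me repeat Step~2 of the proof of Theorem~\ref{tho:OptRecovAna}: $\tilde v$ satisfies the HJB identity on all of $\Omega$, the quadratic bounds $\alpha\|x\|^{2}\le\tilde v(x)\le\beta\|x\|^{2}$, and $\tilde v(0)=0$. Applying Corollary~\ref{coro:VerificConstant} with the fixed constants $\alpha,\beta$ then produces a neighborhood $\tilde{\Omega}\subset\Omega$ of the origin -- depending only on $\Omega$, $\alpha$, $\beta$, $v^{*}$, and crucially independent of the candidate $\tilde v$ -- on which $\tilde v\equiv v^{*}$. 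The candidate-independence in Corollary~\ref{coro:VerificConstant} is the decisive ingredient that makes this identification uniform across all weak subsequential limits.

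Next, I would pass to the restricted RKHS $\mathcall{H}_k(\tilde{\Omega})$ via Aronszajn's restriction theorem: the restriction map $R\colon\mathcall{H}_k(\Omega)\to\mathcall{H}_k(\tilde{\Omega})$, $v\mapsto v|_{\tilde{\Omega}}$, is a bounded linear contraction and therefore weakly continuous. Since every weak accumulation point of $(v_n)$ in $\mathcall{H}_k(\Omega)$ restricts to $v^{*}|_{\tilde{\Omega}}$, the bounded sequence $(v_n|_{\tilde{\Omega}})\subset\mathcall{H}_k(\tilde{\Omega})$ has a unique weak accumulation point, so $v_n|_{\tilde{\Omega}}\rightharpoonup v^{*}|_{\tilde{\Omega}}$ in $\mathcall{H}_k(\tilde{\Omega})$.

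The main anticipated obstacle is the upgrade from weak to strong convergence in $\mathcall{H}_k(\tilde{\Omega})$. In the analytic case this was accomplished through the monotone bound $\normW{v_n}{\mathcall{H}_k(\Omega)}\le\normW{v_{n+1}}{\mathcall{H}_k(\Omega)}\le\normW{v^{*}}{\mathcall{H}_k(\Omega)}$ combined with full weak convergence $v_n\rightharpoonup v^{*}$ in $\mathcall{H}_k(\Omega)$; here the monotonicity persists, but full weak convergence in $\mathcall{H}_k(\Omega)$ is unavailable because weak limits may disagree with $v^{*}$ outside $\tilde{\Omega}$. My plan is to close the gap by combining the nested monotonicity with Aronszajn's variational identity $\normW{w}{\mathcall{H}_k(\tilde{\Omega})}=\inf\{\normW{v}{\mathcall{H}_k(\Omega)}:v|_{\tilde{\Omega}}=w\}$ and the existence of a weak accumulation point $\tilde v\in\mathcall{H}_k(\Omega)$ whose restriction equals $v^{*}|_{\tilde{\Omega}}$ and which attains $\lim_n\normW{v_n}{\mathcall{H}_k(\Omega)}$; this should force $\normW{v_n|_{\tilde{\Omega}}}{\mathcall{H}_k(\tilde{\Omega})}\to\normW{v^{*}|_{\tilde{\Omega}}}{\mathcall{H}_k(\tilde{\Omega})}$, and then the polarization identity $\normW{v_n|_{\tilde{\Omega}}-v^{*}|_{\tilde{\Omega}}}{\mathcall{H}_k(\tilde{\Omega})}^{2}=\normW{v_n|_{\tilde{\Omega}}}{\mathcall{H}_k(\tilde{\Omega})}^{2}-2\langle v_n|_{\tilde{\Omega}},v^{*}|_{\tilde{\Omega}}\rangle_{\mathcall{H}_k(\tilde{\Omega})}+\normW{v^{*}|_{\tilde{\Omega}}}{\mathcall{H}_k(\tilde{\Omega})}^{2}$ upgrades weak to strong convergence exactly as in Step~3 of the analytic proof.
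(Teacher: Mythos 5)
Your architecture coincides with the paper's: feasibility of $v^{*}$ for every discrete problem gives the uniform bound $\normW{v_n}{\Hk}\le\normW{v^{*}}{\Hk}$; weak accumulation points in $\Hk$ inherit all constraints on all of $\Omega$ by density and are identified with $v^{*}$ on the candidate-independent set $\tilde{\Omega}$ from Corollary~\ref{coro:VerificConstant}; and the restriction estimate $\|f|_{\tilde{\Omega}}\|_{\mathcal{H}_k(\tilde{\Omega})}\le\|f\|_{\Hk}$ transports everything to $\mathcal{H}_k(\tilde{\Omega})$. Your handling of the weak-convergence stage (weak continuity of the restriction operator, uniqueness of the weak accumulation point of the restricted sequence) is, if anything, more explicit than the paper's.

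The gap is in your final step. The variational identity $\|w\|_{\mathcal{H}_k(\tilde{\Omega})}=\inf\{\|v\|_{\Hk}: v|_{\tilde{\Omega}}=w\}$ gives $\|v^{*}|_{\tilde{\Omega}}\|_{\mathcal{H}_k(\tilde{\Omega})}\le\|\tilde v\|_{\Hk}$, i.e., it bounds the \emph{target} norm from above by $L:=\lim_n\|v_n\|_{\Hk}$; what the polarization argument needs is the opposite-direction estimate $\limsup_n\|v_n|_{\tilde{\Omega}}\|_{\mathcal{H}_k(\tilde{\Omega})}\le\|v^{*}|_{\tilde{\Omega}}\|_{\mathcal{H}_k(\tilde{\Omega})}$, and since $\|v^{*}|_{\tilde{\Omega}}\|_{\mathcal{H}_k(\tilde{\Omega})}$ may be strictly smaller than $L$, the chain $\|v_n|_{\tilde{\Omega}}\|_{\mathcal{H}_k(\tilde{\Omega})}\le\|v_n\|_{\Hk}\le L$ does not close it. The repair uses only facts you have already established: every weak accumulation point $\tilde v$ of $(v_n)$ in $\Hk$ satisfies \emph{all} constraints on $\Omega$, hence is feasible for every discrete problem, hence $\|v_n\|_{\Hk}\le\|\tilde v\|_{\Hk}$ for all $n$, so $L\le\|\tilde v\|_{\Hk}$; weak lower semicontinuity gives the reverse, so $\|\tilde v\|_{\Hk}=L=\lim_\ell\|v_{n_\ell}\|_{\Hk}$. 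Weak convergence together with convergence of norms upgrades each weakly convergent subsequence to a \emph{strongly} convergent one in $\Hk$ --- this is Step~3 of the analytic proof with $\tilde v$ playing the role of $v^{*}$. Since restriction is a contraction and every such strong limit restricts to $v^{*}|_{\tilde{\Omega}}$, every subsequence of $(v_n|_{\tilde{\Omega}})$ has a further subsequence converging strongly to $v^{*}|_{\tilde{\Omega}}$ in $\mathcal{H}_k(\tilde{\Omega})$, whence the full sequence converges. The norm convergence $\|v_n|_{\tilde{\Omega}}\|_{\mathcal{H}_k(\tilde{\Omega})}\to\|v^{*}|_{\tilde{\Omega}}\|_{\mathcal{H}_k(\tilde{\Omega})}$ you aim for is then a \emph{consequence} of strong convergence, not a stepping stone obtainable from the variational identity.
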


\begin{proof}
By Corollary~\ref{coro:VerificConstant} there exists a subdomain
\(\tilde{\Omega}\subset\Omega\), depending only on \(\Omega\), \(\beta\), \(\alpha\) and $v^*$,
on which the OVF is the unique solution of the constraints
(1)--(2) in Theorem~\ref{thm:VerOpt} together with condition~(3) of
Corollary~\ref{coro:VerificConstant}.
Further, note that for any \(f\in \Hk\) we have the restriction estimate
\(\|f|_{\tilde{\Omega}}\|_{H_{k}(\tilde{\Omega})}
   \le\|f\|_{\Hk}\)
(see \cite[Theorem~10.47]{wendland2004}).  
Since \(v^{*}\) satisfies all constraints with constants \(\alpha\) and \(\beta\),
we infer for every \(n\in\mathbb{N}\)
\[
    \|v_{n}|_{\tilde{\Omega}}\|_{H_{k}(\tilde{\Omega})}
    \;\le\;\|v_{n}\|_{\Hk}
    \;\le\;\|v^{*}\|_{\Hk} .
\]
Hence \(\bigl(v_{n}|_{\tilde{\Omega}}\bigr)_{n\in\mathbb{N}}\) is bounded in
\(H_{k}(\tilde{\Omega})\) and possesses weak accumulation points, 
all of which coincide with \(v^{*}\) on $\tilde{\Omega}$ by uniqueness as in Step~2 of the proof of Theorem~\ref{tho:OptRecovAna}.
Weak convergence therefore holds, and strong convergence follows exactly as in Step~3 of the proof of Theorem~\ref{tho:OptRecovAna}.
\end{proof}
\noindent So far, we have proved convergence for the abstract infinite-dimensional minimization problems \eqref{eq:minProblem} and \eqref{eq:minProblem2}. To obtain a practical solver, we now invoke the finite–dimensional reduction of Theorem~\ref{theo:finiteDimNonlinear} and subsequently employ an iterative Gauss–Newton method, thereby returning to the RKHS–PI formulation.

\subsection{Policy Iteration via Gauss-Newton Linearization}

For the numerical solution of \eqref{eq:minProblem} and \eqref{eq:minProblem2}, the Gauss-Newton scheme from Subsection~\ref{sec:nonlinearOp} provides an effective strategy. To reduce \eqref{eq:minProblem} and \eqref{eq:minProblem2} to finite-dimensional problems, we require the hypotheses of Theorem~\ref{theo:finiteDimNonlinear}. The continuity of the functions $p$ and $q$ is given, and feasibility is ensured, because the OVF  satisfies all the constraints. The only remaining issue is the linear independence of the functionals collected in $\Lambda$. For this, we impose the following rank condition.

\begin{assumption}\label{ass:nonzero-centers}
 For $N \geq M+1$, let $X_n=\{x_i\}_{i=1}^n \subset \Omega\setminus\{0\}$ be pairwise distinct points such that
  \[
    \operatorname{rank}\!\begin{bmatrix}
      f(x_i) & g_1(x_i) & \cdots & g_M(x_i)
    \end{bmatrix} = M+1,
    \qquad \text{for all } i=1,\dots,n.
  \]
\end{assumption}
\noindent A slightly weaker requirement would be that the $f$-column is not contained in the span of the $g$-columns, i.e.,
\[
  f(x_i) \notin \operatorname{span}\!\bigl\{ g_1(x_i),\dots,g_M(x_i) \bigr\}.
\]
Proceeding under this weaker hypothesis, however, would lead to considerably heavier notation, so we retain Assumption~\ref{ass:nonzero-centers}. In all numerical examples below, we employ a nested sequence of finite sets $\bigl(X_n\bigr)_{n\in\mathbb{N}}\subset \Omega \setminus \{0 \}$ with dense union,
\[
  \cl{\bigcup_{n\in\mathbb{N}} X_n}=\Omega,
\]
such that each $X_n$ satisfies Assumption~\ref{ass:nonzero-centers}. Moreover, by Lemma~\ref{theo:KernelLinFunc}, the assumptions stated there together with Assumption~\ref{ass:nonzero-centers} imply that all linear functionals appearing in $
\Lambda \;:=\; \begin{bmatrix}\lambda_p^\top & \lambda_q^\top\end{bmatrix}^\top
$
with $\lambda_p$ from \eqref{eq:lamPCon} and $\lambda_q$ from \eqref{eq:lamQCon} 
are linearly independent. Furthermore, the equality constraints in \eqref{eq:minProblem0} and \eqref{eq:minProblem20} are partially affine with the  nonlinear remainder term $\tilde p (\lambda_{\tilde p}(s))$ from \eqref{eq:tildepCon}. This structure is required both for their resolution and for the applicability of the Gauss–Newton scheme outlined in Subsection~\ref{sec:nonlinearOp}. Consequently, each Gauss-Newton step is equivalent to an infinite-dimensional {linear} optimal recovery problem with inequality constraints. To identify the linear functionals in \eqref{eq:linfun-vector} and the associated right-hand sides, fix an iterate $v^{\mathrm{old}} \in \Hk$ and set
\begin{align*}
  z_{\tilde p }^{\mathrm{old}} &=  \lambda_{\tilde p}\bigl(v^{\mathrm{old}}\bigr) \\
  &= \big[
           \langle g_1(x_1),   \nabla v^{\mathrm{old}}(x_1) \rangle \;\;\; \cdots \;\;\; \langle g_M(x_1),   \nabla v^{\mathrm{old}}(x_1) \rangle \\
           & \quad \;\;\; \cdots \;\;\;  \langle g_1(x_n),   \nabla v^{\mathrm{old}}(x_n) \rangle \;\;\; \cdots \;\;\;  \langle g_M(x_n),   \nabla v^{\mathrm{old}}(x_n) \rangle
    \big]^{\top}.
\end{align*}
The vector of linear functionals is
\begin{align*}
  \lambda^{\mathrm{lin}}_{v^{\mathrm{old}}}(\,\cdot\,) 
  = & \,
  \lambda_{p,0}(\,\cdot\,)\;+\; J_{\tilde p}\bigl( z_{\tilde p }^{\mathrm{old}}\bigr)\,\lambda_{\tilde p}(\,\cdot\,) \\
  =&\bigg[
         \delta_0(\,\cdot\,) \quad \langle f(x_1), \delta_{x_1}  \circ \nabla (\,\cdot\,) \rangle  - \frac{1}{2} \left\langle g(x_1)^{\top} \nabla v^{\mathrm{old}}(x_1), g(x_1)^{\top} \delta_{x_1}   \circ \nabla (\,\cdot\,) \right \rangle_{R^{-1}} \\ &   \quad 
         \cdots \quad \langle f(x_n), \delta_{x_n}   \circ \nabla (\,\cdot\,) \rangle- \frac{1}{2} \left\langle g(x_n)^{\top} \nabla v^{\mathrm{old}}(x_n), g(x_n)^{\top} \delta_{x_n}  \circ \nabla (\,\cdot\,) \right \rangle_{R^{-1}}
    \bigg]^{\top}
\end{align*}
and the corresponding  right-hand side is
\begin{align*}
     b^{\mathrm{old}}
 & =
  r_p - \tilde p \bigl( z_{\tilde p }^{\mathrm{old}}\bigr)
        + J_{\tilde p} \bigl( z_{\tilde p }^{\mathrm{old}}\bigr)\,  z_{\tilde p }^{\mathrm{old}} \\
        & = \begin{bmatrix}
         0 & -h(x_1) - \frac{1}{4} \normW{g(x_1)^{\top}\nabla v^{\mathrm{old}}(x_1)}{R^{-1}}^2 & 
         \cdots & -h(x_n) - \frac{1}{4} \normW{g(x_n)^{\top}\nabla v^{\mathrm{old}}(x_n)}{R^{-1}}^2
    \end{bmatrix}.
\end{align*} Consequently, with the control defined pointwise by
\[
  u^{\mathrm{old}}(x_i)
  \;:=\; -\tfrac{1}{2}\,R^{-1}\,g(x_i)^{\!\top}\,\nabla v^{\mathrm{old}}(x_i) \quad\text{for }i=1,...,n
\]
the equality constraints in $\lambda^{\mathrm{lin}}_{v^{\mathrm{old}}}(v)\;=\;b^{\mathrm{old}}$ reduce to $v(0) = 0$ and  the standard policy-evaluation relation
\begin{equation*}
  \bigl\langle f(x_i) + g(x_i)\,u^{\mathrm{old}}(x_i),\,\nabla v(x_i)\bigr\rangle
  \;=\;
  -\,h(x_i)\;-\; \normW{u^{\mathrm{old}}(x_i)}{R}^2,
  \qquad i=1,\dots,n.
\end{equation*}
Hence, by Theorem~\ref{lem:GaussNewton-vector}, a single Gauss-Newton step for \eqref{eq:minProblem} is obtained by solving the  problem
\begin{equation*}
  \min_{v \in \Hk} \; \|v\|_{\Hk}
  \quad \text{s.t.}\quad
  \begin{cases}
    \bigl\langle f(x_i)+g(x_i)\,u^{\mathrm{old}}(x_i),\,\nabla v(x_i)\bigr\rangle
      = - h(x_i) - \normW{u^{\mathrm{old}}(x_i)}{R}^2,
      & i=1,\dots,n,\\[2mm]
    v(x_i) \ge 0, & i=1,\dots,n,\\
    v(0) = 0, &
  \end{cases}
\end{equation*}
and, respectively, for \eqref{eq:minProblem2},
\begin{equation*}
  \min_{v \in \Hk} \; \|v\|_{\Hk}
  \quad \text{s.t.}\quad
  \begin{cases}
    \bigl\langle f(x_i)+g(x_i)\,u^{\mathrm{old}}(x_i),\,\nabla v(x_i)\bigr\rangle
      = - h(x_i) -  \normW{u^{\mathrm{old}}(x_i)}{R}^2 ,
      & i=1,\dots,n,\\[2mm]
    \alpha \,\|x_i\|^2 \;\le\; v(x_i) \;\le\; \beta\,\|x_i\|^2, & i=1,\dots,n,\\
    v(0) = 0. &
  \end{cases}
\end{equation*}
For the practical implementation, as discussed at the end of Subsection~\ref{sec:nonlinearOp}, it is admissible   to solve the problems above without the inequality constraints and to verify {a posteriori} that the new iterate satisfies them. This is particularly natural when approximating the OVF, which has no active constraints for appropriately chosen $\alpha$ and $\beta$. In our computations we therefore follow this approach: each Gauss-Newton step reduces to Algorithm~\ref{algo:PI}, with an inequality-feasibility check at every iteration. Furthermore, note that the linear independence of the functionals
 in $\lambda^{\mathrm{lin}}_{s^{\mathrm{old}}}$,
which follows from Assumption~\ref{ass:nonzero-centers} together with
Theorem~\ref{lem:GaussNewton-vector}, guarantees the well-posedness
 of the policy-iteration scheme.
However, Theorem~\ref{theo:WellPI} yields the same conclusion under
strictly weaker hypotheses. Hence, Algorithm~\ref{algo:PI} might be applicable
even in the absence of any connection to verification-based nonlinear
optimal recovery formulations for the OVF.

\section{Numerical experiments}\label{sec:numeric}

In this section we solve \eqref{eq:minProblem} and \eqref{eq:minProblem2} to approximate the OVF  using Algorithm~\ref{algo:PI}. An implementation of the proposed approach is available on GitHub.\footnote{\url{https://github.com/ehringts/RKHS-PI.git}} Each RKHS–PI update is accompanied by a {verification} step that checks whether all inequality constraints are satisfied at the centers. In practice, the treatment of the inequality constraints plays a comparatively minor role in our experiments: for the initial RKHS–PI policies specified below for each model problem, and for kernel expansions of sufficiently large size, all inequality constraints remain satisfied throughout the RKHS–PI sequence when choosing $\alpha=\tfrac{1}{2}\lambda_{\min}(P)$ and $\beta=2\lambda_{\max}(P)$, where $P$ is the positive definite solution of the ARE associated with the linearized model problem.\\
Before presenting the four model problems used to assess the schemes, we specify (i) the kernels defining the surrogate spaces, (ii) the strategy for selecting centers, and (iii) the error metrics employed in the evaluation.\\

\noindent \textbf{Kernel structure.}
To approximate  the OVF, we employ the radial kernels
\begin{align*}
    k_{LM}(x,y) &= \exp\!\bigl(-\gamma \,\|x-y\|\bigr)\,\bigl(1+\gamma \|x-y\|\bigr),\\
    k_{G}(x,y)  &= \exp\!\bigl(-(\gamma \,\|x-y\|)^{2}\bigr),
\end{align*}
with shape parameter $\gamma>0$ and $x,y\in\Omega\subset\mathbb{R}^{N}$. The kernel $k_{LM}$ is the linear Matérn kernel; its RKHS is norm–equivalent to a Sobolev space $W^{\frac{N+3}{2},2}(\Omega) \subset C^1(\Omega)$ for Lipschitz domain $\Omega \subset \mathbb{R}^N$ (see Corollary~10.48 in ~\cite{wendland2004}). The kernel $k_{G}$ is Gaussian, whose RKHS consists of real–analytic functions. Thus, the kernel choice determines whether the approximation space comprises real–analytic functions or functions that (nearly) meet the minimal requirement of being once differentiable.\\
We further consider product–form kernels
\begin{equation}\label{eq:product-kernel}
  k(x,y)\;=\;\bigl\langle x,y \bigr\rangle^{2}\,\tilde k(x,y),\qquad x,y\in\mathbb{R}^{N},
\end{equation}
where $\tilde k$ is positive definite. The quadratic factor implies
\[
  k(x,0)=0,
  \qquad
  \nabla_1 k(x,0)=0
  \quad\forall\,x\in\mathbb{R}^{N}.
\]
Consequently, any $\xi \in \Hk$ satisfies the origin constraints
\begin{equation}\label{eq:v-origin-constraints}
  \xi(0)=0,
  \qquad
  \nabla \xi(0)=0.
\end{equation}
This is advantageous in our setting because the OVF $v^*$ fulfills the same conditions at the origin,
\[
v^*(0)=0,
\qquad
\nabla v^*(0)=0,
\]
so the surrogate space automatically enforces the zero–at–the–origin verification constraints. Restricting the hypothesis space to functions obeying \eqref{eq:v-origin-constraints} can improve approximation quality when the target has the same structure. Moreover, under mild
conditions on \(\tilde k\), Assumption~\ref{ass:LinearInd}, needed for  Lemma~\ref{theo:KernelLinFunc}, holds for kernels of the form
\eqref{eq:product-kernel}. A precise statement will be given below.

\begin{theorem}\label{theo:quadKernel}
Let $N\in\mathbb{N}$ and let $\tilde k:\mathbb{R}^N\times\mathbb{R}^N\to\mathbb{R}$ be a translation–invariant strictly positive definite kernel of the form
\[
\tilde k(x,y)=\phi(x-y),
\]
where $\phi\in L^1(\mathbb{R}^{N},\mathbb{R})\cap C^{2}(\mathbb{R}^{N},\mathbb{R})$ with $(1+\Vert   \cdot   \Vert^2) \widehat{\phi}(\, \cdot \, ) \in L^1(\mathbb{R}^{N},\mathbb{R})$. Define
\[
k(x,y)\;=\;\langle x,y\rangle^{2}\,\tilde k(x,y),\qquad x,y\in\mathbb{R}^N,
\]
where $\langle \cdot,\cdot\rangle$ denotes the Euclidean inner product on $\mathbb{R}^N$.
Then $k$ is a positive definite kernel on $\mathbb{R}^N$.
Moreover, writing $\mathcal{H}_k(\mathbb{R}^N)$ for the RKHS of $k$, the family of evaluation and partial derivative evaluation functionals
\begin{equation}\label{eq:fam}
\bigl\{\delta_x\bigr\}_{x\in\mathbb{R}^N\setminus\{0\}}
\;\cup\;
\bigl\{\delta_x(\,\cdot\,)  \circ \partial^s\bigr\}_{x\in\mathbb{R}^N\setminus\{0\},\; s=1,\dots,N}
\end{equation}
is linearly independent in the dual space $\mathcal{H}_k(\mathbb{R}^N)'$.
\end{theorem}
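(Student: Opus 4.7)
My plan is to reduce both parts to the corresponding facts for $\tilde k$. For positive definiteness, observe that $k_0(x,y):=\langle x,y\rangle$ is positive semidefinite, since its Gram matrix on any finite point set is $X^{\!\top}X\succeq 0$ with $X$ the matrix whose columns are the sample points. Two applications of the Schur product theorem (Hadamard products of positive semidefinite kernels are positive semidefinite) then give that $k=k_0\cdot k_0\cdot \tilde k$ is positive definite, as the strict positivity of $\tilde k$ rules out a nontrivial null vector.

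For the linear independence I argue by contradiction. Suppose there exist pairwise distinct points $\{x_i\}_{i=1}^n,\{\tilde x_q\}_{q=1}^{\tilde n}\subset\mathbb{R}^N\setminus\{0\}$, scalars $a_i\in\mathbb{R}$, and vectors $b_q\in\mathbb{R}^N$, not all zero, such that
\[
\Phi(\xi):=\sum_{i=1}^n a_i\,\xi(x_i)+\sum_{q=1}^{\tilde n}\langle b_q,\nabla\xi(\tilde x_q)\rangle=0\qquad\forall\,\xi\in\mathcal{H}_k(\mathbb{R}^N).
\]
The key is to test $\Phi$ on the rich two-parameter family
$\xi_{\alpha,\tilde\xi}(x):=\langle\alpha,x\rangle^{2}\,\tilde\xi(x)$,
with $\alpha\in\mathbb{R}^N$ and $\tilde\xi\in\mathcal{H}_{\tilde k}(\mathbb{R}^N)$. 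Since $p_\alpha(x):=\langle\alpha,x\rangle^{2}=k_0^{2}(x,\alpha)\in\mathcal{H}_{k_0^{2}}(\mathbb{R}^N)$, the classical product-RKHS theorem (every pointwise product of elements of $\mathcal{H}_{k_0^{2}}$ and $\mathcal{H}_{\tilde k}$ belongs to the RKHS of the product kernel) delivers $\xi_{\alpha,\tilde\xi}\in\mathcal{H}_k(\mathbb{R}^N)$.

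Expanding $\nabla\xi_{\alpha,\tilde\xi}(\tilde x_q)=2\langle\alpha,\tilde x_q\rangle\,\alpha\,\tilde\xi(\tilde x_q)+\langle\alpha,\tilde x_q\rangle^{2}\nabla\tilde\xi(\tilde x_q)$ via the product rule and substituting into $\Phi(\xi_{\alpha,\tilde\xi})=0$ yields, for every $\alpha\in\mathbb{R}^N$ and every $\tilde\xi\in\mathcal{H}_{\tilde k}(\mathbb{R}^N)$,
\begin{align*}
\sum_{i=1}^n a_i\langle\alpha,x_i\rangle^{2}\,\tilde\xi(x_i)
&+\sum_{q=1}^{\tilde n}2\langle\alpha,\tilde x_q\rangle\langle b_q,\alpha\rangle\,\tilde\xi(\tilde x_q)\\
&+\sum_{q=1}^{\tilde n}\langle\alpha,\tilde x_q\rangle^{2}\,\langle b_q,\nabla\tilde\xi(\tilde x_q)\rangle=0.
\end{align*}
The hypotheses on $\phi$ allow me to invoke \cite[Theorem~16.4]{wendland2004}: point and first-order partial derivative evaluations at finitely many distinct points are linearly independent on $\mathcal{H}_{\tilde k}(\mathbb{R}^N)$. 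After merging coincident entries across the two index sets, every coefficient appearing in the display must vanish for each $\alpha$. The derivative coefficient at $\tilde x_q$ equals $\langle\alpha,\tilde x_q\rangle^{2}(b_q)_s$; since $\tilde x_q\neq 0$, the polynomial $\alpha\mapsto\langle\alpha,\tilde x_q\rangle^{2}$ is not identically zero, forcing $b_q=0$ for every $q$. With all $b_q=0$, the $\delta_{x_i}$-coefficient collapses to $a_i\langle\alpha,x_i\rangle^{2}$; since $x_i\neq 0$, this forces $a_i=0$, contradicting the nontriviality of the original combination.

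The main technical obstacle is the claim that $\xi_{\alpha,\tilde\xi}\in\mathcal{H}_k(\mathbb{R}^N)$; this is where the $\{0\}$-exclusion in \eqref{eq:fam} becomes essential, since $\xi_{\alpha,\tilde\xi}$ and $\nabla\xi_{\alpha,\tilde\xi}$ vanish at $0$. The product-RKHS theorem is standard but nontrivial; if a self-contained argument is preferred, one can alternatively exhibit $\xi_{\alpha,\tilde\xi}$ as a pointwise limit of finite linear combinations of $k(\cdot,y)$'s, or apply the RKHS domination principle to the positive definite kernel $k(x,y)-c\,x_jx_\ell y_jy_\ell\tilde k(x,y)$ for suitable $c>0$. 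All other steps are direct polynomial identities combined with the linear independence already known for $\mathcal{H}_{\tilde k}$.
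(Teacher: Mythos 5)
Your proof is correct, but it follows a genuinely different route from the paper's. For linear independence, the paper argues via the Riesz representer of the hypothetical vanishing combination: it expands the squared native-space norm, inserts the Bochner representation of $\phi$ to factor $k$, $\partial_1^s k$ and $\partial_1^s\partial_2^\ell k$ through explicit matrix-valued feature maps $A(x,w)$, $B_s(x,w)$, concludes that the combined symbol $C(w)$ vanishes on an open set where $\widehat\phi>0$, and finishes by restricting to a line and invoking the linear independence of $\{e^{-it\alpha_i},\,t e^{-it\alpha_i}\}$ for distinct frequencies; the moment condition $(1+\|\cdot\|^2)\widehat\phi\in L^1$ is used there to differentiate under the integral. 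You instead exploit the tensor-product structure of the product-kernel RKHS (Aronszajn's product theorem, which the paper itself uses for \eqref{eq:produktkernelRKHS}) to test the functional on the separable family $\langle\alpha,x\rangle^2\tilde\xi(x)$, apply the product rule, and reduce everything to the already-available linear independence of $\{\delta_x\}\cup\{\delta_x\circ\partial^s\}$ over $\mathcal{H}_{\tilde k}(\mathbb{R}^N)$ (Wendland, Theorem~16.4 -- exactly the result the paper invokes to justify Assumption~\ref{ass:LinearInd}); the remaining step is an elementary polynomial identity in $\alpha$, and the exclusion of the origin enters in the same place as in the paper, namely through the nonvanishing of $\alpha\mapsto\langle\alpha,x_i\rangle^2$. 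Your argument is shorter and more modular, at the price of outsourcing the Fourier analysis to the cited independence result for $\tilde k$; the paper's argument is self-contained on the Fourier side and makes the feature-map structure explicit. Two small caveats: the membership $\langle\alpha,\cdot\rangle^2\tilde\xi\in\mathcal{H}_k$ should be explicitly pinned to Aronszajn's product theorem (kernel sections give $\langle\alpha,\cdot\rangle^2\in\mathcal{H}_{\langle\cdot,\cdot\rangle^2}$, and products of elements of the factor spaces lie in the product RKHS); and your remark that strict positivity of $\tilde k$ ``rules out a nontrivial null vector'' overstates the first claim -- $k(0,\cdot)\equiv 0$, so $k$ is only positive semidefinite on point sets containing the origin, which is all the theorem asserts under the paper's convention for ``positive definite'' (strictness does hold on $\mathbb{R}^N\setminus\{0\}$ by the refined Schur product theorem, since the relevant diagonal entries $\|x_i\|^4$ are positive).
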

\begin{proof}
    See Appendix \ref{sec:quadKernelProof}.
\end{proof}

\noindent
Regarding the structure of the corresponding RKHS, Aronszajn's seminal work~\cite{aronszajn1950theory} provides the characterization. In particular, by Theorem~II therein, the associated RKHS can be written as
\begin{align}   \label{eq:produktkernelRKHS}
\Hk = \left\{ \left\langle x ,  F(x)\,  x \right\rangle \; : \; F(x) \in \left(\mathcal{H}_{\tilde{k}}(\Omega)\right)^{N \times N} \right\}.
\end{align}
The two concrete instances used in our experiments for approximating the OVF are
\begin{align*}
    k_{LM,Q}(x,y) &= \exp\!\bigl(-\gamma \,\|x-y\|\bigr)\,\bigl(1+\gamma \|x-y\|\bigr)\,\langle x,y\rangle^{2},\\
    k_{G,Q}(x,y)  &= \exp\!\bigl(-(\gamma \,\|x-y\|)^{2}\bigr)\,\langle x,y\rangle^{2},
\end{align*}
where $\gamma>0$ and $x,y\in\Omega\subset\mathbb{R}^{N}$. Their RKHSs are characterized by \eqref{eq:produktkernelRKHS}; in particular, the RKHS of $k_{G,Q}$ still consists of real–analytic functions.\\
For the PI, we present numerical experiments with two enforcement strategies for the zero-at-the-origin verification condition: (i) an {explicit} treatment, implemented as a functional constraint, and (ii) an {implicit} treatment induced by the kernel construction, i.e. the product kernels \(k_{LM,Q}\) or \(k_{G,Q}\).\\

\noindent\textbf{Center selection.}
Next, we  describe the selection of centers $X_n$ for the optimal recovery problems \eqref{eq:minProblem} and \eqref{eq:minProblem2} used to approximate the OVF via the RKHS–PI. 
The convergence results in Theorems~\ref{thm:convergence-quadratic} and \ref{tho:OptRecovAna} require collocation sets that are (approximately) dense in \(\Omega\). 
While a sufficiently fine candidate grid \(\Omega_G\subset\Omega\) approximates this density condition, it typically leads to severe ill-conditioning of the associated kernel matrices and unnecessary computational cost. 
To balance accuracy and numerical stability, we adopt a greedy enrichment strategy in the first RKHS–PI iteration:  Starting from $X_0=\emptyset$, the set of centers is enlarged iteratively by
\begin{equation*}
    x_{n+1}\in\arg\max_{x\in\Omega_G\setminus X_n}\,\nu_{s_v^n}(x),
    \qquad
    X_{n+1}:=X_n\cup\{x_{n+1}\},
\end{equation*}
where $\nu_{s_v^n}:\Omega\to\mathbb{R}$ is a target-dependent selection functional that depends on the current surrogate $s_v^n$ built from the centers $X_n$.
We adopt a residual-based selection criterion: 
With the current surrogate $s_v^n$ of $v$, we select points by the pointwise magnitude of the relative GHJB residual
\begin{equation*}
  \nu_{\mathrm{PI},\,s_v^n}(x)
  \;:=\;
  \left|\,
  \frac{\text{GHJB}(s_v^n,u_0,x)}{h(x)+\normW{u_0(x)}{R}^2}
  \,\right|,
\end{equation*}
where $u_0$   denotes the initial policy. Note that $\nu_{\mathrm{PI},\,s_v^n}(x)$ will be only evaluated for $x \in \Omega \setminus \{0\}$. \\
\noindent The use of target-dependent, residual-driven greedy sampling for linear PDE approximation is discussed in \cite{wenzel2025adaptive}, where dimension-independent convergence rates are established. For this reason we prefer adaptive center
selection over grid–based designs, which are impractical for two of our model
problems due to the curse of dimensionality.

\noindent \textbf{Error measures.} All quantitative assessments are performed on a training set
 and a test set, i.e.,
\[
  \Omega_{\text{\tiny train}}
  := \bigl\{x_{\text{\tiny train}}^{(1)},\ldots,x_{\text{\tiny train}}^{(n_{\text{\tiny train}})}\bigr\}\subset\Omega\setminus \{0\}
\quad\text{and}\quad
  \Omega_{\text{\tiny test}}
  := \bigl\{x_{\text{\tiny test}}^{(1)},\ldots,x_{\text{\tiny test}}^{(n_{\text{\tiny test}})}\bigr\}\subset\Omega\setminus \{0\},
\]
whose specifications are given with each model problem.

\smallskip
\noindent
(i) \emph{Residual error on the training set.}
We monitor the maximal GHJB residual for RKHS–PI (see Algorithm \ref{algo:PI}) over the training set during the greedy center–selection procedure in the initial PI  iteration:
\begin{align*}
  \mathrm{Res\text{-}GHJB}
  := \max_{x\in\Omega_{\text{\tiny train}}}
      \bigl|\,\nu_{\mathrm{PI},\,s_v^{\,n}}(x)\,\bigr|. 
\end{align*}
For a fixed expansion size (number of kernel centers), we select both the kernel  and the shape parameter $\gamma>0$ by minimizing the final training residual defined above.

\smallskip
\noindent
(ii) \emph{True error on the test set.}
For a reference solution, we report the relative $\ell^{2}$ error on the test set,
\[
  \mathrm{Error\text{-}PI}
  :=
  \sqrt{
    \frac{\displaystyle\sum_{i=1}^{n_{\text{\tiny test}}}
          \bigl|\,v^*\!\bigl(x_{\text{\tiny test}}^{(i)}\bigr)-s_v^{\,n}\!\bigl(x_{\text{\tiny test}}^{(i)}\bigr)\bigr|^{2}}
         {\displaystyle\sum_{i=1}^{n_{\text{\tiny test}}}
          \bigl|\,v^*\!\bigl(x_{\text{\tiny test}}^{(i)}\bigr)\bigr|^{2}} 
  }.
\]
For only two of the model problems the exact OVF is known analytically across the entire domain. For the remaining problems, we approximate $v^*$ on $\Omega_{\text{\tiny test}}$ via Pontryagin’s maximum principle (using the implementation of~\cite{azmi2021optimal}) on a finite-time horizon with terminal cost and a sufficiently large horizon. This procedure approximates the infinite-horizon OVF and introduces a small error in the reported true error.

\subsection{Academic toy example}
We consider a two-dimensional controlled system with a single (scalar) control input. The infinite-horizon problem admits the true OVF
\begin{align*}
    v^*(x_1,x_2) = \frac{1}{2}\,x_1^{2} + x_2^{2}.
\end{align*}
This example, adapted from \cite{Vrabie2009}, is a special case of \eqref{eq:MPInfinit} with
\begin{gather*}
    h(x_1,x_2) = x_1^{2} + x_2^{2}, 
    \qquad R = 1,\\[2mm]
    f(x_1,x_2) = 
    \begin{bmatrix}
        -x_1 + x_2 \\[1mm]
        -\tfrac{1}{2}\,(x_1 + x_2) + \tfrac{1}{2}\,x_2 \sin^{2}(x_1)
    \end{bmatrix},
    \qquad
    g(x_1,x_2) = 
    \begin{bmatrix}
        0 \\[1mm]
        \sin(x_1)
    \end{bmatrix}.
\end{gather*}
Following \cite{Vrabie2009}, we initialize the RKHS–PI with the feedback law 
$
    u_0(x_1,x_2) = -\tfrac{3}{2}\,\sin(x_1 + x_2).
$ 
The computational domain is $\Omega = [-1,1]^2$. For the numerical experiments, the training set is
$ 
    \Omega_{\text{\tiny train}} =  G \times G \subset \Omega,
$
where $G$ is a one-dimensional uniform grid with $|G|=100$ nodes on $[-1,1]$. The test set
 $\Omega_{\text{\tiny test}}$ consists of $100$ points drawn independently from the uniform distribution on $\Omega$.\\
Figure~\ref{fig:toyExample} summarizes the results. Using either the kernel \(k_{{G}}\) or \(k_{{G,Q}}\), we select the shape parameter \(\gamma\) by minimizing the $\mathrm{Res\text{-}GHJB}$ loss in the first PI  step, which yields  \(\gamma  = \sqrt{1.7}\) for both kernels. With this choice, the RKHS--PI algorithm exhibits rapid convergence:
 the relative GHJB residual on the training set (cf.\ the error metrics defined
above) drops below \(10^{-5}\) within \(200\)  selected centers, and the relative
test error falls below \(10^{-6}\) within \(4\) RKHS–PI iterations. On this problem,
the structure–aware product kernels do not yield a noticeable advantage over
their standard counterparts.

\begin{figure}[htbp]  
    \centering         
    \includegraphics[width=0.9\textwidth]{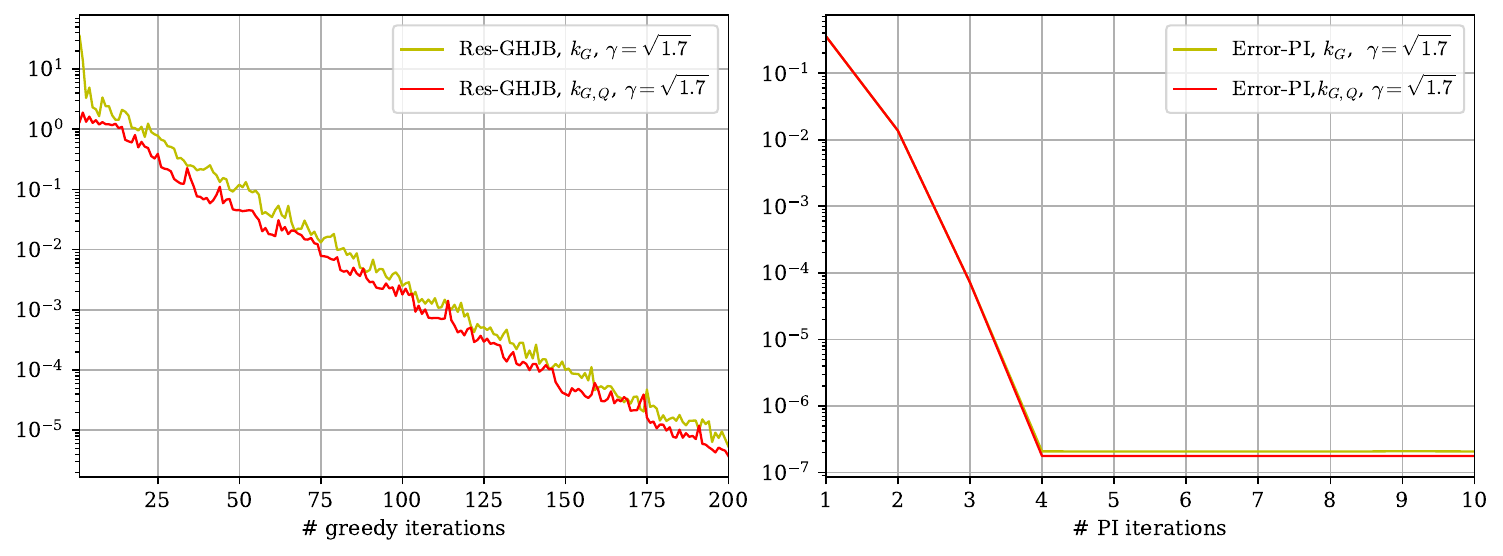}  
    \caption{Academic toy example: The training error of the initial RKHS–PI iteration  plotted against the number of selected centers (left). The True-Error for the RKHS–PI   over the number of  iterations (right).}
    \label{fig:toyExample}  
\end{figure}

\subsection{Van der Pol oscillator}
The second model problem analyzed is the Van der Pol oscillator, a two-dimensional system with a single control input. This problem can be represented as a specific instance of  \eqref{eq:MPInfinit}  using the following definitions: 
\begin{gather*}
    h(x_1,x_2) = x_1^2 + x_2^2, \;\;
    R = \frac{1}{10}, \\
    f(x_1,x_2) = \begin{bmatrix}
         x_2 \\
        -x_1 +  x_2(1-x_1^2) 
    \end{bmatrix}, \text{ and }\,
    g(x_1,x_2)= \begin{bmatrix}
        0 \\
        1
    \end{bmatrix}
\end{gather*}
The initial controller for the RKHS–PI is defined as $$u_0(x_1,x_2) = -\frac{1}{R}g(x_1,x_2)^{\top} P_{\text{\tiny{VP}}} \begin{bmatrix}
        x_1 \\
        x_2
    \end{bmatrix},$$ where $P_{\text{\tiny{VP}}}$ is the solution to the ARE for the linearized version of the problem. This linear quadratic regulator (LQR) approach involves replacing $f$ with its linearized form around the origin:
    \begin{align*}
        \tilde{f}(x_1,x_2)= \begin{bmatrix}
        0 & 1 \\
        -1 & 1
    \end{bmatrix} \begin{bmatrix}
        x_1 \\
        x_2
    \end{bmatrix}
    \end{align*}
The computational domain for this problem is set as $\Omega = [-1,1]^2$.  Consistent with the earlier example, the training domain is defined as $\Omega_{\text{\tiny{train}}} = G \times G$ and the test set
 $\Omega_{\text{\tiny test}}$ consists of $100$ points drawn independently from the uniform distribution on $\Omega$. Note that for this model problem, the OVF is evaluated using the PMP. The shape parameters are set to $\gamma = \sqrt{1.7}$ for $k_{{G}}$ and  $\gamma = \sqrt{1.1}$ for $k_{{G,Q}}$, respectively. Figure~\ref{fig:VanDerPol} summarizes the results. The results are favorable:  For this problem, it is beneficial to use the structure-aware kernel in the standard PI; it improves the true error by more than one order of magnitude compared with the version without the structure-aware kernel.

\begin{figure}[htbp]  
    \centering         
    \includegraphics[width=0.9\textwidth]{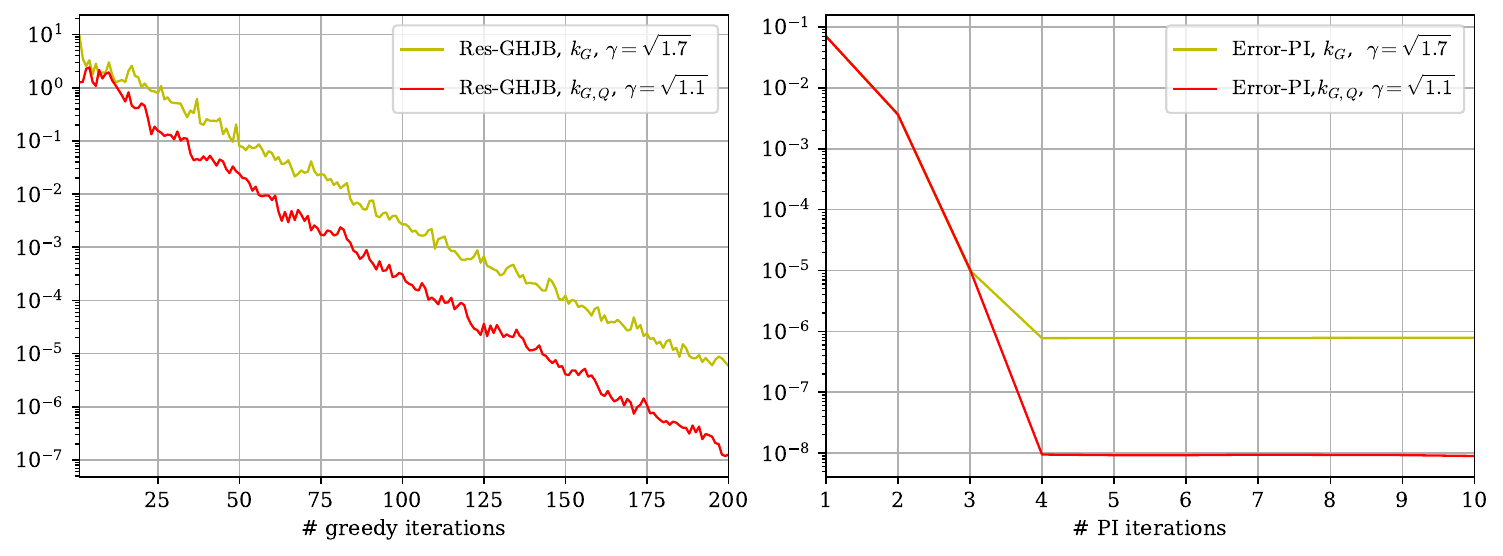}  
    \caption{Van der Pol oscillator:  The training error of the initial RKHS–PI iteration  plotted against the number of selected centers (left). The True-Error for the RKHS–PI over the number of  iterations (right).}
    \label{fig:VanDerPol}  
\end{figure}

\subsection{Linear heat equation with  Dirichlet boundary conditions}
The next model problem under consideration arises from a semi-discretized linear heat equation. The governing equation is given by
\begin{align*}
\dot{\vartheta}(\xi,t) =   \Delta \vartheta(\xi,t)+\sum_{i=1}^4\Xi_i(\xi) u_i(t)
\end{align*}
for $(\xi,t) \in I \times [0,\infty)$ with the boundary conditions
\begin{align}\label{eq:boundCon}
\vartheta(\xi,t) = 0  \text{ for }(\xi,t) \in \partial I \times  [0,\infty), \quad
\vartheta(\xi,0) =  \vartheta_0(\xi)  \text{ for }\xi \in  I
\end{align}
and domain $I := (0,1)$. Here, $\Delta$ is the Laplace operator. The  functions $\Xi_i(\xi)$ are indicator functions: $\Xi_1$ is   equal to 
$1$ on the interval  $[0.1, 0.2]$, $\Xi_2$ on $[0.3, 0.4]$, $\Xi_3$ on $[0.6, 0.7]$, and $\Xi_4$ on $[0.8, 0.9] $. The controller dimension of the system is four ($M = 4$), reflecting that the control inputs are independent of the spatial variable. To formulate an ODE-constrained optimization problem, as pursued in the present paper, an appropriate semi-discretization of the system must first be performed. We employ a semi-discretization approach using the Kansa-type collocation method (see \cite{kansa1990multiquadrics}), where the surrogate solution for the PDE is of the form
\begin{align*}
    s_{\vartheta}(\xi) = \sum_{j=1}^N (\mathbf{x}(t))_j k(\xi_j,\xi)
\end{align*}
with equidistant centers $\xi_j = \tfrac{j}{N+1}$ for $j=1,...,N$ and  a kernel function $k: I \times I \longrightarrow \mathbb{R}$ satisfying the Dirichlet boundary conditions:
\begin{align*}
   k(\xi_j,0) = k(\xi_j,1) = 0 \text{ for all } j=1,...,N
\end{align*}
The time-dependent coefficients $(\mathbf{x}(t))_j$ for $j=1,...,N$ are determined such that the PDE is satisfied point-wise at the collocation points $\{\xi_l\}_{l=1}^N$: 
\begin{align*}
\sum_{i=1}^N (\dot{\mathbf{x}}(t))_i k(\xi_i,\xi_l) =   \sum_{i=1}^N (\mathbf{x}(t))_i  \Delta_2 k(\xi_i,\xi_l)+\sum_{i=1}^4\Xi_i(\xi_l) u_i(t) \text{ for all } l=1,...,N
\end{align*}
This results in a semi-discretized version of the PDE, which is an ODE of the coefficients  $\mathbf{x}(t)$ of the form
\begin{gather}\label{eq:heatODE}
 \dot{\mathbf{x}}(t) = \underbrace{K^{-1} K_{\Delta} }_{=:A}\mathbf{x}(t)+ \underbrace{ K^{-1}\begin{bmatrix}
    b_1 &
   b_2  &
    b_3 &
    b_4
    \end{bmatrix}}_{=:B} \underbrace{ \begin{bmatrix}
    \mathbf{u}_1(t) \\
    \mathbf{u}_2(t) \\
    \mathbf{u}_3(t) \\
    \mathbf{u}_4(t) 
    \end{bmatrix} }_{=: \mathbf{u}(t)} \text{ and }\mathbf{x}(0) = x_0  .
\end{gather}
with Gram matrices $(K)_{i,j} = k(\xi_i,\xi_j)$, $(K_\Delta)_{i,j} = \Delta_2 k(\xi_i,\xi_j)$  for $i,j=1,...,N$
and  the vectors $b_i \in \mathbb{R}^{N}$  representing the discretized forms of the indicator functions  $\Xi_i$. As the kernel function, we utilized 
\begin{align*}
    k(\xi,\xi') = e^{-3000 \Vert \xi - \xi' \Vert^2}\; \xi (1-\xi) \xi' (1-\xi')
\end{align*}
and the number of collocation points is set to $N=50$ for subsequent numerical experiments.
The ODE system \eqref{eq:heatODE} possesses a stable equilibrium state corresponding to the constant zero function. To optimally steer the system toward this equilibrium, we formulate the following OCP:
\begin{gather}
\min_{\mathbf{u} \in \mathcal{U}_{\infty}} \inte{0}{\infty}{t}{ \norm{\mathbf{x}(t)}_{2} ^2 + \frac{1}{100} \cdot \norm{\mathbf{u}(t)}_{2}^2} \label{eq:LHE1}   \\ 
\text{s.t. } \dot{\mathbf{x}}(t) = A\mathbf{x}(t)+ B \mathbf{u}(t) \text{ and }\mathbf{x}(0) = x_0   \label{eq:LHE2} 
\end{gather}
Notably, the trivial control input $\mathbf{u}(t) \equiv 0$ is admissible. Thus, we set  $u_0(x) \equiv 0$.  The computational domain is specified as $\Omega = [0,10]^{50}$. Due to the curse of dimensionality, a regular grid as used in previous model problems is infeasible. Instead, we define a training set $\Omega_{\text{\tiny{train}}}$ consisting of $10^5$  uniformly distributed samples  in $\Omega$. For the LQR problem \eqref{eq:LHE1}--\eqref{eq:LHE2}, the true OVF is  given by $$v(x) = \langle x, P_{\text{\tiny{LHE}}} x\rangle$$
where $P_{\text{\tiny{LHE}}}$ is the positive definite solution to the corresponding ARE. Here too, the test set $\Omega_{\text{\tiny{test}}}$ comprises $100$
uniformly distributed samples  in  $\Omega$. The shape parameters are set to $\gamma = \sqrt{6} \cdot 10^{-5}$ for $k_{{G}}$ and $\gamma = 5 \cdot 10^{-8}$ for $k_{{LM,Q}}$. Due to its superior performance in the structure-aware setting, we used $k_{{LM,Q}}$ for that experiment. \\
Figure~\ref{fig:Lin} summarizes the results. Owing to the high state dimension of $50$, the training errors decrease initially much more slowly, which in turn leads to a worse approximation of the OVF. For the RKHS–PI it is around $2\%$ after two iterations for the Gaussian kernel.\\
However, this does not apply to the  RKHS–PI when we use the structure-aware kernel. After approximately $1275$ selected centers, one observes that the training error drops rapidly. This number aligns with the degrees of freedom of the positive definite matrix $P_{\text{\tiny{LHE}}} \in \mathbb{R}^{50 \times 50}$. Consistent with the RKHS structure induced by the structure-aware kernel $k_{LM,Q}$ (see \eqref{eq:produktkernelRKHS}), we expect that the RKHS–PI scheme has effectively approximated the positive definite  solution of the ARE.

\begin{figure}[htbp]  
    \centering         
 \includegraphics[width=0.9\textwidth]{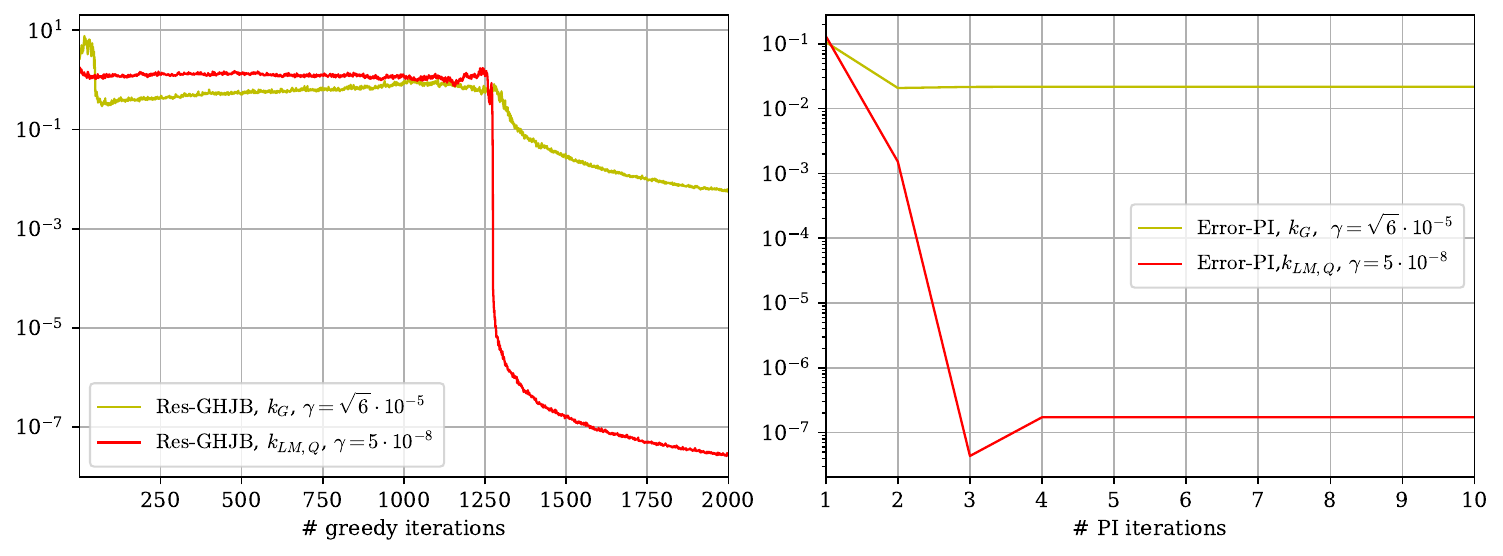} 
    \caption{Linear heat equation: The training error of the initial RKHS–PI iteration  plotted against the number of selected centers (left). The True-Error for the RKHS–PI   over the number of  iterations (right).}
    \label{fig:Lin}  
\end{figure}

\subsection{Nonlinear heat equation with  Dirichlet boundary conditions}
The final model problem extends the previously considered linear heat equation to a nonlinear version of the Zeldovich type. The governing equation is given by
\begin{align*}
\dot{\vartheta}(\xi,t) =   \Delta \vartheta(\xi,t)+\vartheta(\xi,t)^2-\vartheta(\xi,t)^3+\sum_{i=1}^4\Xi_i(\xi) u_i(t)
\end{align*}
for $(\xi,t) \in I \times [0,\infty)$ with the boundary conditions
 as in \eqref{eq:boundCon}. To derive a semi-discretized version of the problem, the same collocation methods as in the previous section are employed. The corresponding control problem is formulated using the same cost functional, leading to the OCP
\begin{gather*}
\min_{\mathbf{u} \in \mathcal{U}_{\infty}}\inte{0}{\infty}{t}{ \norm{\mathbf{x}(t)}_{2} ^2 + \frac{1}{100} \cdot \norm{\mathbf{u}(t)}_{2}^2 }     \\ 
\text{s.t. } \dot{\mathbf{x}}(t) = A\mathbf{x}(t)+  K^{-1}\left( (K\mathbf{x}(t))^{\odot 2}-(K\mathbf{x}(t))^{\odot 3} \right) +  B \mathbf{u}(t) \text{ and }\mathbf{x}(0) = x_0,    
\end{gather*}
denoting,  for a vector $v\in\mathbb{R}^n$ and $p\in\mathbb{N}$, by $v^{\odot p}$ the Hadamard (componentwise) $p$-th power.
 The initialization for RKHS–PI  follows the same choices as in the previous model problem. The computational domain is set as $\Omega = [0,10]^{50}$, with a training set $\Omega_{\text{\tiny{train}}}$ comprising $10^5$ uniformly distributed samples within $\Omega$. Unlike the previous model problem, where the true OVF was known, OVF samples must be approximated using open-loop control for this nonlinear case. The test set $\Omega_{\text{\tiny{test}}}$ is constructed, consisting of $100$ uniformly distributed samples in $\Omega$. The shape parameters are set to $\gamma = \sqrt{6} \cdot 10^{-5}$ for $k_{{G}}$ and $\gamma = 4 \cdot 10^{-8}$ for $k_{{LM,Q}}$.  \\
The results in Figure~\ref{fig:nonlin} are qualitatively  very similar to those of the previous model problem. Here, too, the RKHS–PI with the structure-aware kernel outperforms the  other variant. However, note that for this nonlinear problem the OVF is not known to be quadratic, even though the RKHS induced by \eqref{eq:produktkernelRKHS} may still be a beneficial ansatz space.

\begin{figure}[htbp]  
    \centering         
    \includegraphics[width=0.9\textwidth]{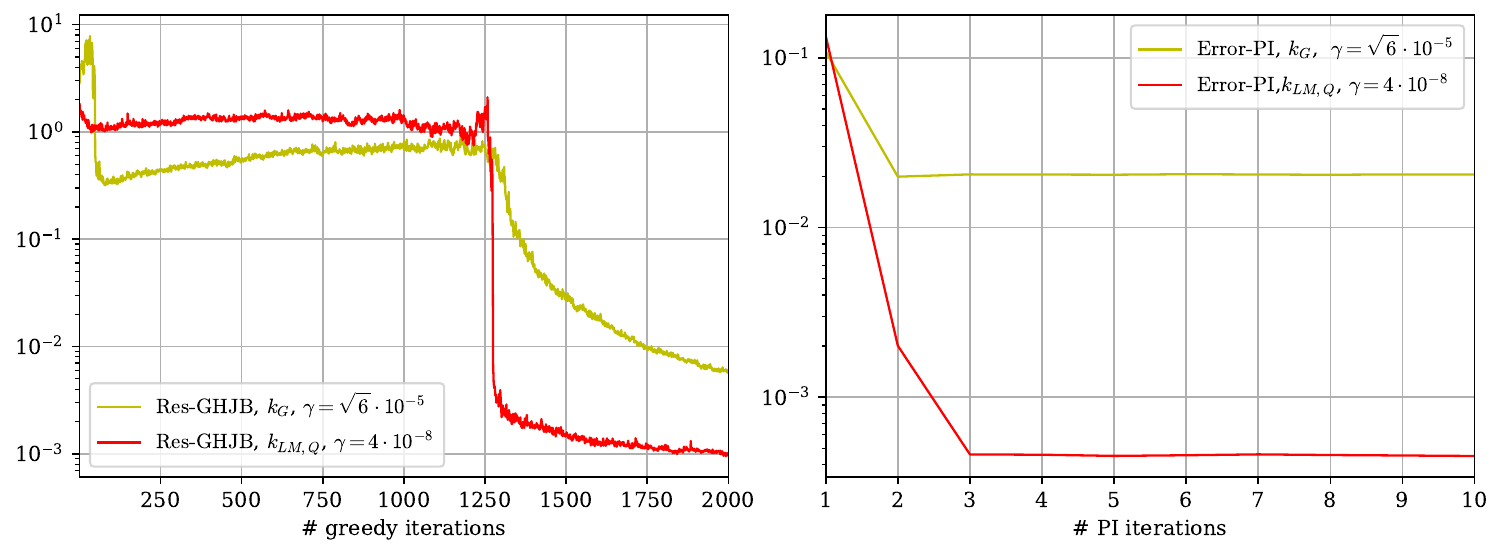}  
    \caption{Nonlinear heat equation:  The training error of the initial RKHS–PI iteration   plotted against the number of selected centers (left). The True-Error for the RKHS–PI   over the number of  iterations (right).}
    \label{fig:nonlin}  
\end{figure}

\section{Conclusion and Outlook}\label{sec:conclusion}
\noindent 
We presented a verification–based methodology for approximating OVFs on bounded domains within an RKHS framework. Under a real-analyticity assumption, Corollary~\ref{coro:VerificAnalytic} provides {global} identification of the OVF on $\Omega$. Under milder assumptions -- namely, uniform quadratic lower and upper bounds -- Corollary~\ref{coro:VerificConstant} guarantees {local} identification. 
These verification conditions are embedded in an abstract nonlinear optimal recovery formulation that simultaneously handles both equality and inequality constraints. A finite-dimensional reduction (Theorem~\ref{theo:finiteDimNonlinear}) yields an exact finite-dimensional program. For both the analytic and the quadratic regimes, we prove convergence of the resulting schemes provided the collocation points become dense in $\Omega$ (Theorems~\ref{tho:OptRecovAna} and~\ref{thm:convergence-quadratic}). \\
A notable limitation of this verification–based approximation framework is the standing assumption that the OVF is continuously differentiable, so that it satisfies the HJB equation in the classical (strong) sense. A natural direction for future work is to relax this requirement and consider viscosity solutions of the HJB together with their associated verification conditions; see, e.g., \cite{bardi1997optimal}. It would then be interesting to investigate whether such viscosity–solution–based conditions can also be cast within the same abstract nonlinear optimal recovery framework. \\
\noindent For the numerical solution of the nonlinear optimal–recovery problems developed from verification conditions, we propose a Gauss–Newton linearization that preserves the inequality constraints and, at each iteration, yields a linear optimal–recovery subproblem. This construction is algorithmically equivalent to RKHS–PI with a verification check for OVF approximation. Even without this verification check, the well–posedness of the policy-evaluation step in RKHS–PI holds under mild assumptions (Theorem~\ref{theo:WellPI}). By contrast, global convergence of the overall PI procedure is not guaranteed a priori; in our analysis it will hinge on the convergence properties of the Gauss–Newton method.  A complementary research direction is to establish convergence of RKHS–PI from the standpoint of approximate PI, explicitly accounting for function-approximation and evaluation errors. \\
Concerning the Gauss–Newton algorithm, incorporating line search or damping strategies~\cite{nocedal2006numerical,deuflhard1979affine,chen2005convergence,ferreira2011local} is a natural next step. For this, it would be particularly relevant to analyze how an adaptive step size affects the associated finite–dimensional (linear) optimal–recovery subproblems and their conditioning.\\
\noindent Regarding the numerical observations, across all the model problems considered, the RKHS–PI scheme converged rapidly:
Training residuals decayed quickly and test errors were small after a few
iterations. In our experiments, inequality constraints were typically satisfied
throughout the RKHS–PI sequence when initialized with a suitably chosen stabilizing
policy; thus, the verification check played a minor practical role.\\
\noindent In summary, RKHS–based policy iteration offers a practically viable route to verification–driven OVF approximation. The results herein establish well–posedness and convergence in idealized regimes and open several avenues for sharpening both theory and algorithms.\\

\section*{Acknowledgements}
 The authors appreciate and acknowledge Matthias Baur for his helpful comments and insights on this manuscript.  Funded by Deutsche Forschungsgemeinschaft (DFG, German Research Foundation) under Project No. 540080351 and Germany’s Excellence Strategy -- EXC 2075 -- 390740016. We acknowledge support from the Stuttgart Center for Simulation Science (SimTech).

\appendix
\section{Proof of the statements of Section \ref{sec:VerificationCon}}
\subsection{Proof of the verification of optimality Theorem \ref{thm:VerOpt}}\label{sec:appoptVer}
The proof begins by establishing three properties -- proved in the subsequent three steps -- that the OVF \(v^*\) and the corresponding closed-loop dynamics
\begin{align}\label{eq:closedLoopProof}
\dot{\mathbf{x}}^*(t;x)
  = f\!\bigl(\mathbf{x}^*(t;x)\bigr)
    + g\!\bigl(\mathbf{x}^*(t;x)\bigr)\,
      u^{*}\!\bigl(\mathbf{x}^*(t;x)\bigr),
\end{align}
with the feedback law
\[
  u^{*}(x) \;=\; -\tfrac{1}{2}\,R^{-1}\,g(x)^{\!\top}\,\nabla v^{*}(x),
\]
satisfy. 
Since the argument for these properties relies only on (i) \(v^*\) being positive semidefinite -- i.e., \(v^*(x)\ge 0\) for all \(x\in\Omega\) -- which follows from the definition of \(v^*\) and the positive definiteness of \(h\) and \(R\), and on (ii) \(v^*\) satisfying the HJB equations \eqref{eq:HBJ1}–\eqref{eq:HBJ2} under the assumption \(v^*\in C^{1}(\Omega,\mathbb{R})\), the same three properties also hold for the candidate OVF \(v\) and its closed-loop dynamics
\begin{align}\label{eq:clCOVF}
  \dot{\mathbf{x}}_{u_v}(t;x)
     = f\bigl(\mathbf{x}_{u_v}(t;x)\bigr)
       + g\bigl(\mathbf{x}_{u_v}(t;x)\bigr)\,
         u_v\bigl(\mathbf{x}_{u_v}(t;x)\bigr),
  \qquad
  \mathbf{x}_{u_v}(0;x)=x.
\end{align}
For brevity and clarity, we present the argument only for \(v^*\).

 \medskip
 \noindent  
\textbf{Step 1 (Positive definiteness of $v^*$).}
Fix any $x\in\Omega\setminus\{0\}$. By continuity of the closed-loop trajectory
$t\mapsto \mathbf{x}^*(t;x)$ and openness of $\Omega$, there exists
$t_{x}^*>0$ such that $\mathbf{x}^*(t;x)\in\Omega$ and
$\mathbf{x}^*(t;x)\neq 0$ for all $t\in[0,t_{x}^*]$.
Since $v^*\in C^1(\Omega,\mathbb{R})$ satisfies the HJB equation, integrating from $0$ to $t_{x}^*$ yields
\begin{align*}
    v^*(x)
& =v^*\bigl(\mathbf{x}^*(t^*_{x};x)\bigr) -\inte{0}{t_{x}^*}{t}{\left\langle \nabla v^*(\mathbf{x}^*(t;x)),\, f(\mathbf{x}^*(t;x))+g(\mathbf{x}^*(t;x))\,u^{*}(\mathbf{x}^*(t;x)) \right\rangle}\\
&= v^*\bigl(\mathbf{x}^*(t^*_{x};x)\bigr) + \inte{0}{t_{x}^*}{t}{h\bigl(\mathbf{x}^*(t;x)\bigr)
+ \|u^*\bigl(\mathbf{x}^*(t;x)\bigr)\|_{R}^{2}} \geq \inteNo{0}{t_{x}^*}{t}{h\bigl(\mathbf{x}^*(t;x)\bigr)} 
\end{align*}
using $v^*\ge 0$.
Because $h$ is positive definite and $\mathbf{x}^*(t;x)\neq 0$ for all
$t\in[0,t_{x}^*]$, the integrand is strictly positive on a set of positive
measure, hence the integral (and therefore $v^*(x)$) is strictly positive.
Together with $v^*(0)=0$, this shows that $v^*$ is positive definite on $\Omega$.
 
 \medskip
 \noindent 
\textbf{Step 2 (Construction of bounded forward-invariant sets).}
 A set \(S\subset\Omega\) is {forward-invariant} for the ODE
\(\dot{\mathbf{x}}(t)=\ell(\mathbf{x}(t))\), with \(\ell\in C(\Omega,\mathbb{R}^N)\),
if for every \(x\in S\) the corresponding solution \(\mathbf{x}(\cdot;x)\) exists for all
\(t\ge 0\) and satisfies \(\mathbf{x}(t;x)\in S\) for all \(t\ge 0\).

\medskip
\noindent We construct a bounded forward-invariant neighborhood of the origin for the closed-loop
optimal dynamics $\mathbf{x}^{*}(t;x)$. Since \(\Omega\) is open and
\(0\in\Omega\), there exists \(r>0\) such that \(\cl{B_r(0)}\subset\Omega\) (denoting with $\operatorname{cl}$  the closure of a set).
By continuity and positive definiteness of \(v^{*}\), the minimum of \(v^{*}\) on the
compact set \(\{y\in\Omega:\|y\|=r\}\) is attained and strictly positive; define
\[
c\;:=\;\min_{\{y\in\Omega:\,\|y\|=r\}} v^{*}(y)\;>\;0,
\qquad
\Omega_{v^{*}}\;:=\;\bigl\{\,y\in\cl{B_r(0)}:\; v^{*}(y)<c\,\bigr\}.
\]
Clearly, \(\Omega_{v^{*}}\) is bounded and \(\cl{\Omega_{v^{*}}}\subset\cl{B_r(0)}\subset\Omega\).
Next, suppose, towards a contradiction, that \(\Omega_{v^{*}}\) is not forward
invariant. Then there exist \(x\in\Omega_{v^{*}}\) and a first exit time
\[
t'\;:=\;\inf\{\,t>0:\;\mathbf{x}^{*}(t;x)\notin \Omega_{v^{*}}\,\}\;>\;0,
\]
such that by continuity \(\mathbf{x}^{*}(t';x)\in \partial\Omega_{v^{*}}\subset\cl{B_r(0)}\).
Integrating the HJB identity
along the trajectory \(\mathbf{x}^{*}(\cdot;x)\)  yields
\[
  c \;>\; v^*(x)
  \;=\;
  v^*\bigl(\mathbf{x}^*(t';x)\bigr)
  + \inte{0}{t'}{t}{
       h\!\bigl(\mathbf{x}^*(t;x)\bigr)
       + \normW{u^*\!\bigl(\mathbf{x}^*(t;x)\bigr)}{R}^2
     }
  \;\ge\;
  v^* \bigl(\mathbf{x}^*(t';x)\bigr).
\]
Now, two cases are possible:
 If $\mathbf{x}^*(t';x)\in B_r(0)$ and since 
        $v^* \bigl(\mathbf{x}^*(t';x)\bigr)<c$, we have
        $\mathbf{x}^*(t';x)\in\Omega_{v^*}$, contradicting the choice of~$t'$.
   If
        $\mathbf{x}^*(t';x)\in\partial B_r(0)$, then
        \[
          v^* \bigl(\mathbf{x}^*(t';x)\bigr)\;\ge\;
          \min_{\{y\in\Omega\mid\|y\|=r\}} v^*(y) \;=\; c,
        \]
        which contradicts $v^*\!\bigl(\mathbf{x}^*(t';x)\bigr)<c$.
Therefore \(\Omega_{v^{*}}\) is forward invariant. Since \(0\in\Omega_{v^{*}}\),
\(c>0\), and \(v^{*}\) is positive definite and continuous, \(\Omega_{v^{*}}\) contains
a  neighborhood of the origin.

 \medskip
 \noindent 
\textbf{Step 3 (Asymptotic stability of the closed-loop dynamics).}
Let \(x\in\Omega_{v^*}\). By Step~2, the trajectory \(t\mapsto \mathbf{x}^*(t;x)\) of
\eqref{eq:closedLoopProof} with initial condition \(x\) remains in the forward-invariant,
bounded set \(\Omega_{v^*}\) for all \(t\ge 0\). We now prove that
\begin{align}\label{eq:asymEq}
\lim_{t\to\infty}\|\mathbf{x}^*(t;x)\|=0 .
\end{align}
We use Barbalat’s lemma (see, e.g., \cite{farkas2016variations}): if
\(p\colon[0,\infty)\to\mathbb{R}\) is uniformly continuous and
\(\inteNo{0}{\infty}{t}{p(t)}<\infty\), then \(\lim_{t\to\infty}p(t)=0\).\\
Because \(v^{*}\) satisfies the HJB equation on \(\Omega_{v^*}\subset\Omega\), integrating
 the HJB identity along the closed-loop trajectory \eqref{eq:closedLoopProof} from \(0\) to an arbitrary horizon \(T>0\) yields, for every
\(x\in\Omega_{v^*}\),
\begin{align*}
v^{*}(x)
= v^{*}\!\bigl(\mathbf{x}^*(T;x)\bigr)
+ \inte{0}{T}{t}{
    h\!\bigl(\mathbf{x}^*(t;x)\bigr)
    + \|u^{*}\!\bigl(\mathbf{x}^*(t;x)\bigr)\|_{R}^{2}
  }.
\end{align*}
Letting \(T\to\infty\) and using \(v^{*}\!\bigl(\mathbf{x}^*(T;x)\bigr)\ge 0\) gives
\begin{equation}\label{eq:finite-cost}
\inteNo{0}{\infty}{t}{ h\left(\mathbf{x}^*(t;x)\right)}
\;\le\; v^{*}(x) \;<\; \infty .
\end{equation}
To apply Barbalat’s lemma to \(p(t):=h \bigl(\mathbf{x}^*(t;x)\bigr)\), we must show that
\(p\) is uniformly continuous. For all \(t\ge 0\),
\[
\bigl\|\dot{\mathbf{x}}^*(t;x)\bigr\|
\le \max_{y\in\cl{\Omega_{v^*}}} \left( \|f(y)\|
   + \tfrac12 \|R^{-1}\|\,
      \|g(y)\|^{2}\,
      \|\nabla v^{*}(y)\|\right)
=: C < \infty,
\]
where finiteness follows from the compactness of \(\cl{\Omega_{v^*}}\) and the
continuity of \(f\), \(g\), and \(\nabla v^{*}\). Hence
\(t\mapsto \mathbf{x}^*(t;x)\) is Lipschitz with constant \(C\). Since \(h\in C^{1}(\mathbb{R}^N,\mathbb{R})\) and
\(\nabla h\) is bounded on \(\cl{\Omega_{v^*}}\), the mean value theorem implies, for
all \(t_1,t_2\ge 0\),
\[
\bigl|p(t_1)-p(t_2)\bigr|
= \bigl|h\!\bigl(\mathbf{x}^*(t_1;x)\bigr)-h\!\bigl(\mathbf{x}^*(t_2;x)\bigr)\bigr|
\le \Bigl(\max_{y\in\cl{\Omega_{v^*}}}\|\nabla h(y)\|\Bigr)\, C\,|t_1-t_2|,
\]
so \(p\) is uniformly continuous on \([0,\infty)\). Combining this with
\eqref{eq:finite-cost} and applying Barbalat’s lemma gives
\[
\lim_{t\to\infty} h\!\bigl(\mathbf{x}^*(t;x)\bigr)=0 .
\]
Finally, since \(h\) is continuous and positive definite, and \(\cl{\Omega_{v^*}}\) is compact, \eqref{eq:asymEq} follows; this can be established easily via a proof by contradiction.

 \medskip
 \noindent 
\textbf{Step 4 ($v(x)\ge v^*(x)$ for $x\in\Omega_v$).}
As mentioned at the beginning of the proof, everything established for $v^*$ also holds for $v$,
since $v$ is positive semidefinite and satisfies the HJB equation by assumption.
In particular, there exists a forward-invariant set $\Omega_v\subset\Omega$ for the closed-loop dynamics \eqref{eq:clCOVF}
and, as in Step~3,
\begin{align}\label{eq:asymEq2}
\lim_{t\to\infty}\|\mathbf{x}_{u_v}(t;x)\|=0 \quad \text{for all } x\in\Omega_v .
\end{align}
For any $x\in\Omega_v$, the HJB identity along the trajectory
$t\mapsto \mathbf{x}_{u_v}(t;x)$ gives, for every $T>0$, by integrating
\[
v(x)
= v \bigl(\mathbf{x}_{u_v}(T;x)\bigr)
  + \inte{0}{T}{t}{
      h\!\bigl(\mathbf{x}_{u_v}(t;x)\bigr)
      + \|u_v\!\bigl(\mathbf{x}_{u_v}(t;x)\bigr)\|_{R}^{2}
    } .
\]
Letting $T\to\infty$ and using \eqref{eq:asymEq2} together with the continuity and
positive definiteness of $v$ (so that $v(\mathbf{x}_{u_v}(T;x))\to v(0)=0$), we obtain
\[
v(x)
= \inte{0}{\infty}{t}{
      h\!\bigl(\mathbf{x}_{u_v}(t;x)\bigr)
      + \|u_v\!\bigl(\mathbf{x}_{u_v}(t;x)\bigr)\|_{R}^{2}
    }
= J_{\infty}(x,u_v)
\;\ge\; \inf_{\mathbf{u}\in\mathcal{U}_{\infty}} J_{\infty}(x,\mathbf{u})
= v^*(x),
\]
 since  \(u_{v}\!\bigl(\mathbf{x}_{u_v}(\,\cdot\,;x)\bigr) \in \mathcal{U}_{\infty}\) by
 \begin{align*}
     \sup_{t \geq 0} \Vert u_{v}\!\left(\mathbf{x}_{u_v}(t;x) \right) \Vert  \leq \max_{x \in \cl{\Omega_v}} \Vert  \tfrac12\,R^{-1} g(x)^{\!\top}\nabla v(x)\Vert.
 \end{align*}
This establishes \(v(x)\ge v^{*}(x)\).

 \medskip
 \noindent 
\textbf{Step 5 ($v(x)\le v^*(x)$ for $x\in\Omega_{v^*}$).}
Define the Hamiltonian
\[
H_v(x,u)
:= \big\langle \nabla v(x),\, f(x)+g(x)\,u \big\rangle + h(x) + \|u\|_{R}^{2}.
\]
Since $R$ is positive definite, the map $u\mapsto H_v(x,u)$ is strictly convex, and differentiation with
respect to $u$ gives the unique minimizer
\[
u_v(x) \;=\; -\tfrac{1}{2}\,R^{-1}\,g(x)^{\!\top}\nabla v(x).
\]
Because $v$ satisfies the HJB equation on $\Omega$,
\begin{align}\label{eq:minHami}
    0 \;=\; \min_{u \in \mathbb{R}^M} H_v(x,u) \;=\; H_v\!\bigl(x,u_v(x)\bigr)
\;\le\; H_v(x,u)\qquad\text{for all } x\in\Omega,\; u\in \mathbb{R}^M.
\end{align}
Fix $x\in\Omega_{v^*}$ and evaluate this inequality along the optimal closed-loop
trajectory $t\mapsto \mathbf{x}^*(t;x)$ (which remains in $\Omega_{v^*}$ by Step~2) with
the control $u=u^*\!\bigl(\mathbf{x}^*(t;x)\bigr)$. Using
\[
\frac{\text{d}}{\text{d}t}\,v\!\bigl(\mathbf{x}^*(t;x)\bigr)
= \big\langle \nabla v(\mathbf{x}^*(t;x)),\, f(\mathbf{x}^*(t;x))
    + g(\mathbf{x}^*(t;x))\,u^*\!\bigl(\mathbf{x}^*(t;x)\bigr) \big\rangle,
\]
we obtain by \eqref{eq:minHami}, for all $t\ge 0$,
\[
0 \;\le\; \frac{\text{d}}{\text{d}t}\,v \bigl(\mathbf{x}^*(t;x)\bigr)
          + h\!\bigl(\mathbf{x}^*(t;x)\bigr)
          + \bigl\|u^*\!\bigl(\mathbf{x}^*(t;x)\bigr)\bigr\|_{R}^{2}.
\]
Integrating from $0$ to $T>0$ yields
\[
0 \;\le\; v \bigl(\mathbf{x}^*(T;x)\bigr) - v(x)
          + \inte{0}{T}{t}{
                h\!\bigl(\mathbf{x}^*(t;x)\bigr)
              + \bigl\|u^*\!\bigl(\mathbf{x}^*(t;x)\bigr)\bigr\|_{R}^{2}
            }.
\]
By Step~3, $\mathbf{x}^*(T;x)\to 0$ as $T\to\infty$; since $v$ is continuous and
positive semidefinite, $v\bigl(\mathbf{x}^*(T;x)\bigr)\to v(0)=0$. Letting $T\to\infty$ gives
\[
v(x) \;\le\; \inte{0}{\infty}{t}{
                h\!\bigl(\mathbf{x}^*(t;x)\bigr)
              + \bigl\|u^*\!\bigl(\mathbf{x}^*(t;x)\bigr)\bigr\|_{R}^{2}
            }
= J_{\infty}\!\bigl(x,u^*\bigr) \;=\; v^*(x).
\]
Thus $v(x)\le v^*(x)$ for all $x\in\Omega_{v^*}$.

\medskip
\noindent
\textbf{Step 6 (Conclusion).}
Combining Step~5, which establishes \(v(x)\le v^*(x)\) for \(x\in\Omega_{v^*}\), with
Step~4, which shows \(v(x)\ge v^*(x)\) for \(x\in\Omega_v\), we conclude that
$
v(x)=v^*(x)$ for all $x\in\tilde{\Omega}:=\Omega_{v^*}\cap\Omega_v.
$
Since both \(\Omega_{v^*}\) and \(\Omega_v\) contain neighborhoods of the origin, their
intersection \(\tilde{\Omega}\) also contains a neighborhood of the origin.

\subsection{Proof of Corollary \ref{coro:VerificAnalytic}} \label{sec:appoptVerAna}
\begin{proof}
Recall the identity theorem for real-analytic functions \cite{krantz2002primer}:  
if $\Omega\subset\mathbb{R}^N$ is open and connected and if 
$f,g\in A(\Omega)$ coincide on a nonempty open set $\Omega'\subset\Omega$, then
$f\equiv g$ on all of~$\Omega$.\\
By Theorem \ref{thm:VerOpt} there exists a  set
$\tilde{\Omega}\subset\Omega$ that contains a neighborhood of the origin and on
which $v(x)=v^*(x)$ for every $x\in\tilde{\Omega}$.  
This neighborhood plays the role of $\Omega'$ in the identity theorem.  Hence,
by that theorem and the analyticity of $v$ and~$v^*$, we conclude that
$
  v(x)=v^*(x)$ for all $x\in\Omega.
$
\end{proof}
\subsection{Proof of Corollary \ref{coro:VerificConstant}} \label{sec:VerificConstant}
\begin{proof}
The argument follows the strategy of the proof of Theorem~\ref{thm:VerOpt}; the only change is the definition of the set~$\Omega_v$.
Fix $r>0$ such that the closed ball $\cl{B_r(0)}\subset\Omega$, and set
\[
  c \;:=\; \alpha r^{2},
  \qquad
  \Omega_v \;:=\; \bigl\{\, y\in\cl{B_r(0)} \;\big|\, v(y)<c \bigr\}.
\]
Since $\min_{\{y\in\Omega:\,\|y\|=r\}} v(y)\ge \alpha r^2 = c$, the same arguments as in Step~2 of the proof of Theorem~\ref{thm:VerOpt} imply that $\Omega_v$ is forward invariant for the dynamics~\eqref{eq:clCOVF}. Exactly as in the proof of Theorem~\ref{thm:VerOpt}, one then shows that
\[
  v(x) \;=\; v^{*}(x)\qquad\text{for all }x\in \Omega_v \cap \Omega_{v^{*}}.
\]
Moreover, since $v$ satisfies $v(x)\le \beta\|x\|^{2}$, we have for every
$x\in B_{\sqrt{\alpha/\beta}\,r}(0)$ that
\[
  v(x)\;\le\; \beta\|x\|^{2}
  \;<\; \beta\,\frac{\alpha}{\beta}\,r^{2}
  \;=\; \alpha r^{2}
  \;=\; c,
\]
and hence $B_{\sqrt{\alpha/\beta}\,r}(0)\subset \Omega_v$.
Define
\[
  \widetilde{\Omega}
  \;:=\;
  B_{\sqrt{\alpha/\beta}\,r}(0)\,\cap\,\Omega_{v^{*}}
  \;\subset\; \Omega_v \cap \Omega_{v^{*}}.
\]
By the preceding argument, $v=v^{*}$ on $\widetilde{\Omega}$.
Thus we have found the desired set $\widetilde{\Omega}$, which depends only on~$\alpha$, $\beta$, and~$v^{*}$, and which contains a neighborhood of the origin.
\end{proof}

\section{Proof of Theorem \ref{theo:quadKernel}}\label{sec:quadKernelProof}
\begin{proof}
The kernel $\langle x,y \rangle^2$ is p.d.; thus, by \cite{aronszajn1950theory}, $k$ is p.d. as a product of two p.d. kernels.

\medskip
\noindent Next, for the linear independence, assume, to the contrary, that there exist pairwise distinct points $x_1,\dots,x_n\in\mathbb{R}^N\setminus\{0\}$ and coefficients
$a_i\in\mathbb{R}$ and $b_{i,s}\in\mathbb{R}$, not all zero, such that the functional
\[
\sum_{i=1}^n a_i\,\delta_{x_i}(\,\cdot\,)\;+\;\sum_{i=1}^n\sum_{s=1}^N b_{i,s}\,\delta_{x_i}(\,\cdot\,)  \circ\partial^s(\,\cdot\,)
\]
vanishes in $\HkdualR$. By the Riesz representation theorem, its representer is
\[
g(\cdot)
=\sum_{i=1}^n a_i\,k(x_i,\cdot)
+\sum_{i=1}^n\sum_{s=1}^N b_{i,s}\,\partial_{1}^{s}k(x_i,\cdot)
\in\HkR,
\]
and $\|g\|_{\HkR}^2=0$. Expanding this norm using the reproducing identities \eqref{eq:repProp} and \eqref{eq:repPropDeriv} yields
\begin{align*}
0
=\sum_{i,j=1}^n a_i a_j\,k(x_i,x_j)
+\sum_{i,j=1}^n \sum_{s=1}^N a_i b_{j,s}\,\bigl(\partial^s_{2}k(x_i,x_j)+\partial^s_{1}k(x_j,x_i)\bigr) 
+\sum_{i,j=1}^n\sum_{s,\ell=1}^N b_{i,s} b_{j,\ell}\,\partial^s_{1}\partial_2^{\ell}k(x_i,x_j).
\end{align*}
Next we exploit the Fourier representation of $\tilde k$. Since $\phi\in L^1(\mathbb{R}^N,\mathbb{R})$, by Bochner's theorem
\[
\phi(z)=\frac{1}{(2\pi)^N}\int_{\mathbb{R}^N} \left [ e^{i\langle w,z\rangle}\,\widehat{\phi}(w) \right ] \text{d}w,
\qquad \widehat{\phi}\ge 0,
\]
and $\widehat{\phi}$ is continuous. Moreover, $\tilde k$ is strictly positive definite, so there exists a nonempty open set $U\subset\mathbb{R}^N$ with $\widehat{\phi}(w)>0$ for all $w\in U$ (see \cite[Theorem 6.11]{wendland2004}).
Introduce the complex Frobenius inner product $\langle A,B\rangle_{\mathcal{F}}=\operatorname{trace}(\overline{A}^{\top}B)$ and the matrix–valued maps
\[
A(x,w):=x x^{\top}\,e^{-i\langle w,x\rangle},\qquad
B_s(x,w):=\bigl(x e_s^{\top}+e_s x^{\top}-i\,w_s\,x x^{\top}\bigr)e^{-i\langle w,x\rangle},
\]
where $e_s$ is the $s$-th unit vector and $w_s$ the $s$-th coordinate of $w$.
A direct computation shows
\begin{align}
k(x,y)
&=\frac{1}{(2\pi)^N}\int_{\mathbb{R}^N} \left [ \bigl\langle A(x,w),A(y,w)\bigr\rangle_{\mathcal{F}}\,\widehat{\phi}(w) \right ] \text{d}w, \label{eq:rep1}\\
\partial^s_{1 }k(x,y)
&=\frac{1}{(2\pi)^N}\int_{\mathbb{R}^N} \left [ \bigl\langle B_s(x,w),A(y,w)\bigr\rangle_{\mathcal{F}}\,\widehat{\phi}(w)\right ]\text{d}w, \label{eq:rep2}\\
\partial^s_{1 }\partial_2^{\ell}k(x,y)
&=\frac{1}{(2\pi)^N}\int_{\mathbb{R}^N}\left [  \bigl\langle B_s(x,w),B_\ell(y,w)\bigr\rangle_{\mathcal{F}}\,\widehat{\phi}(w)\right ]\text{d}w. \label{eq:rep3}
\end{align}
The interchange of differentiation and integration in \eqref{eq:rep2}–\eqref{eq:rep3} is justified by the dominated convergence theorem under the integrability assumption
$
(1+\|w\|^{2})\,\widehat{\phi}(w)\in L^{1}(\mathbb{R}^{N}),
$
which provides an $L^{1}$-dominating envelope for the integrands.
Inserting \eqref{eq:rep1}–\eqref{eq:rep3} into the expansion of $\|g\|_{\HkR}^2$ and regrouping the integrand, we obtain
\[
0=\frac{1}{(2\pi)^N}\int_{\mathbb{R}^N} \bigl\| C(w)\bigr\|_{\mathcal{F}}^{2}\,\widehat{\phi}(w)\,dw,
\]
where
\[
C(w):=\sum_{i=1}^n a_i\,A(x_i,w)
\;+\;\sum_{i=1}^n\sum_{s=1}^N b_{i,s}\,B_s(x_i,w).
\]
Because $\widehat{\phi}\ge 0$ and $\widehat{\phi}>0$ on the open set $U$, it follows by continuity that $C(w)=0$ for all $w\in U$.\\
Since $x_1,\dots,x_n$ are pairwise distinct and $U$ is open, there exists an open set $\tilde U\subset U$ such that the scalars
$
\alpha_{i,v}:=\langle v,x_i\rangle
$
are pairwise distinct for every $v\in\tilde U$. Indeed, one can take
\[
\tilde U \;:=\; U \setminus \bigcup_{1\le i<j\le n}\Bigl\{y\in\mathbb{R}^N:\,\langle y,\,x_i-x_j\rangle=0\Bigr\},
\]
the complement of a finite union of closed hyperplanes, hence open. Fix $v\in\tilde U$ and abbreviate $\alpha_i:=\langle v,x_i\rangle$. Since $\tilde U$ is open and $v\neq 0$, there exists $\delta>0$ such that $t v\in\tilde U$ for all $t\in[1-\delta,\,1+\delta]$. For such $t$ we have $C(t v)=0$, i.e.
\[
\sum_{i=1}^n e^{-i t \alpha_i}\!\left[\,a_i\,x_i x_i^{\top}
+\sum_{s=1}^N b_{i,s}\,\bigl(x_i e_s^{\top}+e_s x_i^{\top}\bigr)\right]
\;-\; i t \sum_{i=1}^n e^{-i t \alpha_i}\!\left[\left(\sum_{s=1}^N b_{i,s}\,v_s\right) x_i x_i^{\top}\right]
\;=\;0 .
\]
Thus the matrix–valued trigonometric polynomial
\[
t \longmapsto \sum_{i=1}^n \bigl(M_i - i t\,N_i\bigr)\,e^{-i t \alpha_i},
\qquad
M_i:=a_i\,x_i x_i^{\top}+\sum_{s=1}^N b_{i,s}\,(x_i e_s^{\top}+e_s x_i^{\top}),\quad
N_i:=\Bigl(\sum_{s=1}^N b_{i,s} v_s\Bigr)\,x_i x_i^{\top},
\]
vanishes on an interval. Since the frequencies $\alpha_1,\dots,\alpha_n$ are distinct, the scalar functions
$
\bigl\{\,e^{-i t \alpha_i},\; t\,e^{-i t \alpha_i}\,\bigr\}_{i=1}^n
$
are linearly independent on any interval (see \cite{heittokangas2023value}). Hence $M_i=0$ and $N_i=0$ for each $i=1,\dots,n$. Because $x_i\neq 0$, the equality $N_i=0$ implies $\sum_{s=1}^N b_{i,s} v_s=0$. As $v$ can be chosen arbitrarily from the open set $\tilde U$, it follows that $b_{i,s}=0$ for all $i$ and all $s$. Then $M_i=0$ reduces to $a_i\,x_i x_i^{\top}=0$, whence $a_i=0$ for all $i$. This contradicts the assumption that not all coefficients are zero. Therefore, the family \eqref{eq:fam} is linearly independent in $\HkdualR$.

\end{proof}


\begin{thebibliography}{10}
  \providebibliographyfont{name}{}%
  \providebibliographyfont{lastname}{}%
  \providebibliographyfont{title}{\emph}%
  \providebibliographyfont{jtitle}{\btxtitlefont}%
  \providebibliographyfont{etal}{\emph}%
  \providebibliographyfont{journal}{}%
  \providebibliographyfont{volume}{}%
  \providebibliographyfont{ISBN}{\MakeUppercase}%
  \providebibliographyfont{ISSN}{\MakeUppercase}%
  \providebibliographyfont{url}{\url}%
  \providebibliographyfont{numeral}{}%
  \expandafter\btxselectlanguage\expandafter {\btxfallbacklanguage}

\expandafter\btxselectlanguage\expandafter {\btxfallbacklanguage}
\bibitem {filippov1988differential}
\btxnamefont {A.\btxfnamespacelong F. \btxlastnamefont {Filippov}}\btxauthorcolon\ \btxtitlefont {Differential Equations with Discontinuous Righthand Sides}.
\newblock \btxpublisherfont {Springer Netherlands}, 1988\ifbtxprintISBN {, \mbox{\btxISBN~\btxISBNfont {9789401577939}}}.

\bibitem {bardi1997optimal}
\btxnamefont {M. \btxlastnamefont {Bardi}} \btxandlong {}\ \btxnamefont {I. \btxlastnamefont {Capuzzo-Dolcetta}}\btxauthorcolon\ \btxtitlefont {Optimal Control and Viscosity Solutions of {H}amilton-{J}acobi-{B}ellman Equations}.
\newblock \btxpublisherfont {Birkhäuser Boston}, 1997\ifbtxprintISBN {, \mbox{\btxISBN~\btxISBNfont {9780817647551}}}.

\bibitem {crandal1984two}
\btxnamefont {M.\btxfnamespacelong G. \btxlastnamefont {Crandall}} \btxandlong {}\ \btxnamefont {P.\btxfnamespacelong L. \btxlastnamefont {Lions}}\btxauthorcolon\ \btxjtitlefont {\btxifchangecase {Two approximations of solutions of {H}amilton-{J}acobi equations}{Two approximations of solutions of {H}amilton-{J}acobi equations}}.
\newblock \btxjournalfont {Mathematics of Computation}, 43(167):1--19, 1984\ifbtxprintISSN {, \mbox{\btxISSN~\btxISSNfont {1088-6842}}}.

\bibitem {abgrall1996numerical}
\btxnamefont {R. \btxlastnamefont {Abgrall}}\btxauthorcolon\ \btxjtitlefont {\btxifchangecase {Numerical discretization of the first-order {H}amilton-{J}acobi equation on triangular meshes}{Numerical discretization of the first-order {H}amilton-{J}acobi equation on triangular meshes}}.
\newblock \btxjournalfont {Communications on Pure and Applied Mathematics}, 49(12):1339--1373, \btxprintmonthyear{.}{12}{1996}{long}\ifbtxprintISSN {, \mbox{\btxISSN~\btxISSNfont {1097-0312}}}.

\bibitem {shu2007high}
\btxnamefont {C.\btxfnamespacelong W. \btxlastnamefont {Shu}}\btxauthorcolon\ \btxtitlefont {High order numerical methods for time dependent {H}amilton-{J}acobi equations}, \btxpageslong {}\ 47--91.
\newblock \btxpublisherfont {World Scientific}, \btxprintmonthyear{.}{10}{2007}{long}.

\bibitem {dolcetta1983discrete}
\btxnamefont {I.\btxfnamespacelong C. \btxlastnamefont {Dolcetta}}\btxauthorcolon\ \btxjtitlefont {\btxifchangecase {On a discrete approximation of the {H}amilton-{J}acobi equation of dynamic programming}{On a discrete approximation of the {H}amilton-{J}acobi equation of dynamic programming}}.
\newblock \btxjournalfont {Applied Mathematics and Optimization}, 10(1):367--377, \btxprintmonthyear{.}{6}{1983}{long}\ifbtxprintISSN {, \mbox{\btxISSN~\btxISSNfont {1432-0606}}}.

\bibitem {falcone1994discrete}
\btxnamefont {M. \btxlastnamefont {Falcone}} \btxandlong {}\ \btxnamefont {R. \btxlastnamefont {Ferretti}}\btxauthorcolon\ \btxjtitlefont {\btxifchangecase {Discrete time high-order schemes for viscosity solutions of {H}amilton-{J}acobi-{B}ellman equations}{Discrete time high-order schemes for viscosity solutions of {H}amilton-{J}acobi-{B}ellman equations}}.
\newblock \btxjournalfont {Numerische Mathematik}, 67(3):315--344, \btxprintmonthyear{.}{4}{1994}{long}\ifbtxprintISSN {, \mbox{\btxISSN~\btxISSNfont {0945-3245}}}.

\bibitem {falcone2013semi}
\btxnamefont {M. \btxlastnamefont {Falcone}} \btxandlong {}\ \btxnamefont {R. \btxlastnamefont {Ferretti}}\btxauthorcolon\ \btxtitlefont {{S}emi-{L}agrangian Approximation Schemes for Linear and {H}amilton—{J}acobi {E}quations}.
\newblock \btxpublisherfont {Society for Industrial and Applied Mathematics}, \btxprintmonthyear{.}{12}{2013}{long}\ifbtxprintISBN {, \mbox{\btxISBN~\btxISBNfont {9781611973051}}}.

\bibitem {bokanowski2013adaptive}
\btxnamefont {O. \btxlastnamefont {Bokanowski}}, \btxnamefont {J. \btxlastnamefont {Garcke}}, \btxnamefont {M. \btxlastnamefont {Griebel}}\btxandcomma {} \btxandlong {}\ \btxnamefont {I. \btxlastnamefont {Klompmaker}}\btxauthorcolon\ \btxjtitlefont {\btxifchangecase {An adaptive sparse grid {S}emi-{L}agrangian scheme for first order {H}amilton-{J}acobi {B}ellman equations}{An Adaptive Sparse Grid {S}emi-{L}agrangian Scheme for First Order {H}amilton-{J}acobi {B}ellman Equations}}.
\newblock \btxjournalfont {Journal of Scientific Computing}, 55(3):575--605, \btxprintmonthyear{.}{10}{2012}{long}\ifbtxprintISSN {, \mbox{\btxISSN~\btxISSNfont {1573-7691}}}.

\bibitem {alla2023hjb}
\btxnamefont {A. \btxlastnamefont {Alla}}, \btxnamefont {H. \btxlastnamefont {Oliveira}}\btxandcomma {} \btxandlong {}\ \btxnamefont {G. \btxlastnamefont {Santin}}\btxauthorcolon\ \btxjtitlefont {\btxifchangecase {{HJB}-{RBF} based approach for the control of {PDE}s}{{HJB}-{RBF} Based Approach for the Control of {PDE}s}}.
\newblock \btxjournalfont {Journal of Scientific Computing}, 96(1), \btxprintmonthyear{.}{5}{2023}{long}\ifbtxprintISSN {, \mbox{\btxISSN~\btxISSNfont {1573-7691}}}.

\bibitem {grune1997adaptive}
\btxnamefont {L. \btxlastnamefont {Grüne}}\btxauthorcolon\ \btxjtitlefont {\btxifchangecase {An adaptive grid scheme for the discrete {H}amilton-{J}acobi-{B}ellman equation}{An adaptive grid scheme for the discrete {H}amilton-{J}acobi-{B}ellman equation}}.
\newblock \btxjournalfont {Numerische Mathematik}, 75(3):319--337, \btxprintmonthyear{.}{1}{1997}{long}\ifbtxprintISSN {, \mbox{\btxISSN~\btxISSNfont {0945-3245}}}.

\bibitem {kunisch2004hjb}
\btxnamefont {K. \btxlastnamefont {Kunisch}}, \btxnamefont {S. \btxlastnamefont {Volkwein}}\btxandcomma {} \btxandlong {}\ \btxnamefont {L. \btxlastnamefont {Xie}}\btxauthorcolon\ \btxjtitlefont {\btxifchangecase {{HJB}-{POD}-based feedback design for the optimal control of evolution problems}{{HJB}-{POD}-Based Feedback Design for the Optimal Control of Evolution Problems}}.
\newblock \btxjournalfont {SIAM Journal on Applied Dynamical Systems}, 3(4):701--722, \btxprintmonthyear{.}{1}{2004}{long}\ifbtxprintISSN {, \mbox{\btxISSN~\btxISSNfont {1536-0040}}}.

\bibitem {Nakamura21}
\btxnamefont {T. \btxlastnamefont {Nakamura-Zimmerer}}, \btxnamefont {Q. \btxlastnamefont {Gong}}\btxandcomma {} \btxandlong {}\ \btxnamefont {W. \btxlastnamefont {Kang}}\btxauthorcolon\ \btxjtitlefont {\btxifchangecase {Adaptive deep learning for high-dimensional {H}amilton--{J}acobi--{B}ellman equations}{Adaptive Deep Learning for High-Dimensional {H}amilton--{J}acobi--{B}ellman Equations}}.
\newblock \btxjournalfont {SIAM Journal on Scientific Computing}, 43(2):A1221--A1247, \btxprintmonthyear{.}{1}{2021}{long}\ifbtxprintISSN {, \mbox{\btxISSN~\btxISSNfont {1095-7197}}}.

\bibitem {azmi2021optimal}
\btxnamefont {B. \btxlastnamefont {Azmi}}, \btxnamefont {D. \btxlastnamefont {Kalise}}\btxandcomma {} \btxandlong {}\ \btxnamefont {K. \btxlastnamefont {Kunisch}}\btxauthorcolon\ \btxjtitlefont {\btxifchangecase {Optimal feedback law recovery by gradient-augmented sparse polynomial regression}{Optimal Feedback Law Recovery by Gradient-Augmented Sparse Polynomial Regression}}.
\newblock \btxjournalfont {Journal of Machine Learning Research}, 22(48):1--32, 2021.
\newblock {\latintext \btxurlfont{http://jmlr.org/papers/v22/20-755.html}}.

\bibitem {EHRING2022325}
\btxnamefont {T. \btxlastnamefont {Ehring}} \btxandlong {}\ \btxnamefont {B. \btxlastnamefont {Haasdonk}}\btxauthorcolon\ \btxjtitlefont {\btxifchangecase {Greedy sampling and approximation for realizing feedback control for high dimensional nonlinear systems}{Greedy sampling and approximation for realizing feedback control for high dimensional nonlinear systems}}.
\newblock \btxjournalfont {IFAC-PapersOnLine}, 55(20):325--330, 2022\ifbtxprintISSN {, \mbox{\btxISSN~\btxISSNfont {2405-8963}}}.

\bibitem {ehring2024hermite}
\btxnamefont {T. \btxlastnamefont {Ehring}} \btxandlong {}\ \btxnamefont {B. \btxlastnamefont {Haasdonk}}\btxauthorcolon\ \btxjtitlefont {\btxifchangecase {Hermite kernel surrogates for the value function of high-dimensional nonlinear optimal control problems}{Hermite kernel surrogates for the value function of high-dimensional nonlinear optimal control problems}}.
\newblock \btxjournalfont {Advances in Computational Mathematics}, 50(3), \btxprintmonthyear{.}{4}{2024}{long}\ifbtxprintISSN {, \mbox{\btxISSN~\btxISSNfont {1572-9044}}}.

\bibitem {ehring2025online}
\btxnamefont {T. \btxlastnamefont {Ehring}} \btxandlong {}\ \btxnamefont {B. \btxlastnamefont {Haasdonk}}\btxauthorcolon\ \btxjtitlefont {\btxifchangecase {Online adaptive surrogates for the value function of high-dimensional nonlinear optimal control problems}{Online adaptive surrogates for the value function of high-dimensional nonlinear optimal control problems}}.
\newblock \btxjournalfont {IFAC-PapersOnLine}, 59(1):331--336, 2025\ifbtxprintISSN {, \mbox{\btxISSN~\btxISSNfont {2405-8963}}}.

\bibitem {wenzel2023analysis}
\btxnamefont {T. \btxlastnamefont {Wenzel}}, \btxnamefont {G. \btxlastnamefont {Santin}}\btxandcomma {} \btxandlong {}\ \btxnamefont {B. \btxlastnamefont {Haasdonk}}\btxauthorcolon\ \btxjtitlefont {\btxifchangecase {Analysis of target data-dependent greedy kernel algorithms: Convergence rates for $f$-, $f \cdot p$- and $f/p$-greedy}{Analysis of Target Data-Dependent Greedy Kernel Algorithms: Convergence Rates for $f$-, $f \cdot P$- and $f/P$-Greedy}}.
\newblock \btxjournalfont {Constructive Approximation}, 57(1):45--74, \btxprintmonthyear{.}{10}{2022}{long}\ifbtxprintISSN {, \mbox{\btxISSN~\btxISSNfont {1432-0940}}}.

\bibitem {gruene2011}
\btxnamefont {L. \btxlastnamefont {Grüne}} \btxandlong {}\ \btxnamefont {J. \btxlastnamefont {Pannek}}\btxauthorcolon\ \btxtitlefont {Nonlinear Model Predictive Control}.
\newblock \btxpublisherfont {Springer International Publishing}, 2017\ifbtxprintISBN {, \mbox{\btxISBN~\btxISBNfont {9783319460246}}}.

\bibitem {grune2009analysis}
\btxnamefont {L. \btxlastnamefont {Grüne}}\btxauthorcolon\ \btxjtitlefont {\btxifchangecase {Analysis and design of unconstrained nonlinear {MPC} schemes for finite and infinite dimensional systems}{Analysis and Design of Unconstrained Nonlinear {MPC} Schemes for Finite and Infinite Dimensional Systems}}.
\newblock 2007.

\expandafter\btxselectlanguage\expandafter {\btxfallbacklanguage}
\bibitem {Rebele2012uncostrained}
\btxnamefont {M. \btxlastnamefont {Reble}} \btxandlong {}\ \btxnamefont {F. \btxlastnamefont {Allgöwer}}\btxauthorcolon\ \btxjtitlefont {\btxifchangecase {Unconstrained model predictive control and suboptimality estimates for nonlinear continuous-time systems}{Unconstrained model predictive control and suboptimality estimates for nonlinear continuous-time systems}}.
\newblock \btxjournalfont {Automatica}, 48(8):1812--1817, \btxprintmonthyear{.}{8}{2012}{long}\ifbtxprintISSN {, \mbox{\btxISSN~\btxISSNfont {0005-1098}}}.

\bibitem {azmi2016stabilizability}
\btxnamefont {B. \btxlastnamefont {Azmi}} \btxandlong {}\ \btxnamefont {K. \btxlastnamefont {Kunisch}}\btxauthorcolon\ \btxjtitlefont {\btxifchangecase {On the stabilizability of the {B}urgers equation by receding horizon control}{On the Stabilizability of the {B}urgers Equation by Receding Horizon Control}}.
\newblock \btxjournalfont {SIAM Journal on Control and Optimization}, 54(3):1378--1405, \btxprintmonthyear{.}{1}{2016}{long}\ifbtxprintISSN {, \mbox{\btxISSN~\btxISSNfont {1095-7138}}}.

\bibitem {bellman1957B}
\btxnamefont {R. \btxlastnamefont {Bellman}}\btxauthorcolon\ \btxjtitlefont {\btxifchangecase {A {M}arkovian decision process}{A {M}arkovian Decision Process}}.
\newblock \btxjournalfont {Indiana University Mathematics Journal}, 6(4):679--684, 1957\ifbtxprintISSN {, \mbox{\btxISSN~\btxISSNfont {0022-2518}}}.

\bibitem {falcone1987}
\btxnamefont {M. \btxlastnamefont {Falcone}}\btxauthorcolon\ \btxjtitlefont {\btxifchangecase {A numerical approach to the infinite horizon problem of deterministic control theory}{A numerical approach to the infinite horizon problem of deterministic control theory}}.
\newblock \btxjournalfont {Applied Mathematics and Optimization}, 15(1):1--13, \btxprintmonthyear{.}{1}{1987}{long}\ifbtxprintISSN {, \mbox{\btxISSN~\btxISSNfont {1432-0606}}}.

\bibitem {heydari2014}
\btxnamefont {A. \btxlastnamefont {Heydari}}\btxauthorcolon\ \btxjtitlefont {\btxifchangecase {Revisiting approximate dynamic programming and its convergence}{Revisiting Approximate Dynamic Programming and its Convergence}}.
\newblock \btxjournalfont {IEEE Transactions on Cybernetics}, 44(12):2733--2743, \btxprintmonthyear{.}{12}{2014}{long}\ifbtxprintISSN {, \mbox{\btxISSN~\btxISSNfont {2168-2275}}}.

\bibitem {bellman1957}
\btxnamefont {R. \btxlastnamefont {Bellman}}\btxauthorcolon\ \btxtitlefont {Dynamic programming}.
\newblock Princeton {L}andmarks in {M}athematics. \btxpublisherfont {Princeton University Press}, Princeton, 2010\ifbtxprintISBN {, \mbox{\btxISBN~\btxISBNfont {1400835380}}}.

\bibitem {eigel2023dynamical}
\btxnamefont {M. \btxlastnamefont {Eigel}}, \btxnamefont {R. \btxlastnamefont {Schneider}}\btxandcomma {} \btxandlong {}\ \btxnamefont {D. \btxlastnamefont {Sommer}}\btxauthorcolon\ \btxjtitlefont {\btxifchangecase {Dynamical low‐rank approximations of solutions to the hamilton–jacobi–bellman equation}{Dynamical low‐rank approximations of solutions to the Hamilton–Jacobi–Bellman equation}}.
\newblock \btxjournalfont {Numerical Linear Algebra with Applications}, 30(3), \btxprintmonthyear{.}{8}{2022}{long}\ifbtxprintISSN {, \mbox{\btxISSN~\btxISSNfont {1099-1506}}}.

\bibitem {dolgov2021tensor}
\btxnamefont {S. \btxlastnamefont {Dolgov}}, \btxnamefont {D. \btxlastnamefont {Kalise}}\btxandcomma {} \btxandlong {}\ \btxnamefont {K.\btxfnamespacelong K. \btxlastnamefont {Kunisch}}\btxauthorcolon\ \btxjtitlefont {\btxifchangecase {Tensor decomposition methods for high-dimensional {H}amilton--{J}acobi--{B}ellman equations}{Tensor Decomposition Methods for High-dimensional {H}amilton--{J}acobi--{B}ellman Equations}}.
\newblock \btxjournalfont {SIAM Journal on Scientific Computing}, 43(3):A1625--A1650, \btxprintmonthyear{.}{1}{2021}{long}\ifbtxprintISSN {, \mbox{\btxISSN~\btxISSNfont {1095-7197}}}.

\bibitem {kalise2018polynomial}
\btxnamefont {D. \btxlastnamefont {Kalise}} \btxandlong {}\ \btxnamefont {K. \btxlastnamefont {Kunisch}}\btxauthorcolon\ \btxjtitlefont {\btxifchangecase {Polynomial approximation of high-dimensional {H}amilton--{J}acobi--{B}ellman equations and applications to feedback control of semilinear parabolic {PDE}s}{Polynomial Approximation of High-Dimensional {H}amilton--{J}acobi--{B}ellman Equations and Applications to Feedback Control of Semilinear Parabolic {PDE}s}}.
\newblock \btxjournalfont {SIAM Journal on Scientific Computing}, 40(2):A629--A652, \btxprintmonthyear{.}{1}{2018}{long}\ifbtxprintISSN {, \mbox{\btxISSN~\btxISSNfont {1095-7197}}}.

\bibitem {beard1997galerkin}
\btxnamefont {R.\btxfnamespacelong W. \btxlastnamefont {Beard}}, \btxnamefont {G.\btxfnamespacelong N. \btxlastnamefont {Saridis}}\btxandcomma {} \btxandlong {}\ \btxnamefont {J.\btxfnamespacelong T. \btxlastnamefont {Wen}}\btxauthorcolon\ \btxjtitlefont {\btxifchangecase {Galerkin approximations of the generalized {H}amilton-{J}acobi-{B}ellman equation}{Galerkin approximations of the generalized {H}amilton-{J}acobi-{B}ellman equation}}.
\newblock \btxjournalfont {Automatica}, 33(12):2159--2177, \btxprintmonthyear{.}{12}{1997}{long}\ifbtxprintISSN {, \mbox{\btxISSN~\btxISSNfont {0005-1098}}}.

\bibitem {abu2005nearly}
\btxnamefont {M. \btxlastnamefont {Abu-Khalaf}} \btxandlong {}\ \btxnamefont {F.\btxfnamespacelong L. \btxlastnamefont {Lewis}}\btxauthorcolon\ \btxjtitlefont {\btxifchangecase {Nearly optimal control laws for nonlinear systems with saturating actuators using a neural network {HJB} approach}{Nearly optimal control laws for nonlinear systems with saturating actuators using a neural network {HJB} approach}}.
\newblock \btxjournalfont {Automatica}, 41(5):779--791, \btxprintmonthyear{.}{5}{2005}{long}\ifbtxprintISSN {, \mbox{\btxISSN~\btxISSNfont {0005-1098}}}.

\bibitem {Kamalapurkar2018}
\btxnamefont {R. \btxlastnamefont {Kamalapurkar}}, \btxnamefont {P. \btxlastnamefont {Walters}}, \btxnamefont {J. \btxlastnamefont {Rosenfeld}}\btxandcomma {} \btxandlong {}\ \btxnamefont {W. \btxlastnamefont {Dixon}}\btxauthorcolon\ \btxtitlefont {Reinforcement Learning for Optimal Feedback Control}.
\newblock \btxpublisherfont {Springer International Publishing}, 2018\ifbtxprintISBN {, \mbox{\btxISBN~\btxISBNfont {9783319783840}}}.

\bibitem {vamvoudakis2010online}
\btxnamefont {K.\btxfnamespacelong G. \btxlastnamefont {Vamvoudakis}} \btxandlong {}\ \btxnamefont {F.\btxfnamespacelong L. \btxlastnamefont {Lewis}}\btxauthorcolon\ \btxjtitlefont {\btxifchangecase {Online actor–critic algorithm to solve the continuous-time infinite horizon optimal control problem}{Online actor–critic algorithm to solve the continuous-time infinite horizon optimal control problem}}.
\newblock \btxjournalfont {Automatica}, 46(5):878--888, \btxprintmonthyear{.}{5}{2010}{long}\ifbtxprintISSN {, \mbox{\btxISSN~\btxISSNfont {0005-1098}}}.

\bibitem {alla2020B}
\btxnamefont {A. \btxlastnamefont {Alla}}, \btxnamefont {B. \btxlastnamefont {Haasdonk}}\btxandcomma {} \btxandlong {}\ \btxnamefont {A. \btxlastnamefont {Schmidt}}\btxauthorcolon\ \btxjtitlefont {\btxifchangecase {Feedback control of parametrized {PDE}s via model order reduction and dynamic programming principle}{Feedback control of parametrized {PDE}s via model order reduction and dynamic programming principle}}.
\newblock \btxjournalfont {Advances in Computational Mathematics}, 46(1), \btxprintmonthyear{.}{2}{2020}{long}\ifbtxprintISSN {, \mbox{\btxISSN~\btxISSNfont {1572-9044}}}.

\bibitem {ehring2025convergencepolicyiterationinfinitehorizon}
\btxnamefont {T. \btxlastnamefont {Ehring}}, \btxnamefont {B. \btxlastnamefont {Azmi}}\btxandcomma {} \btxandlong {}\ \btxnamefont {B. \btxlastnamefont {Haasdonk}}\btxauthorcolon\ \btxtitlefont {\btxifchangecase {On the convergence of the policy iteration for infinite-horizon nonlinear optimal control problems}{On the Convergence of the Policy Iteration for Infinite-Horizon Nonlinear Optimal Control Problems}}, \btxprintmonthyear{.}{7}{2025}{long}.

\bibitem {chen2021solving}
\btxnamefont {Y. \btxlastnamefont {Chen}}, \btxnamefont {B. \btxlastnamefont {Hosseini}}, \btxnamefont {H. \btxlastnamefont {Owhadi}}\btxandcomma {} \btxandlong {}\ \btxnamefont {A.\btxfnamespacelong M. \btxlastnamefont {Stuart}}\btxauthorcolon\ \btxjtitlefont {\btxifchangecase {Solving and learning nonlinear {PDE}s with {G}aussian processes}{Solving and learning nonlinear {PDE}s with {G}aussian processes}}.
\newblock \btxjournalfont {Journal of Computational Physics}, 447:110668, \btxprintmonthyear{.}{12}{2021}{long}\ifbtxprintISSN {, \mbox{\btxISSN~\btxISSNfont {0021-9991}}}.

\bibitem {lancaster1995algebraic}
\btxnamefont {P. \btxlastnamefont {Lancaster}} \btxandlong {}\ \btxnamefont {L. \btxlastnamefont {Rodman}}\btxauthorcolon\ \btxtitlefont {Algebraic Riccati Equations}.
\newblock \btxpublisherfont {Oxford University PressOxford}, \btxprintmonthyear{.}{9}{1995}{long}\ifbtxprintISBN {, \mbox{\btxISBN~\btxISBNfont {9781383026290}}}.

\bibitem {wendland2004}
\btxnamefont {H. \btxlastnamefont {Wendland}}\btxauthorcolon\ \btxtitlefont {Scattered Data Approximation}.
\newblock \btxpublisherfont {Cambridge University Press}, \btxprintmonthyear{.}{12}{2004}{long}\ifbtxprintISBN {, \mbox{\btxISBN~\btxISBNfont {9780511617539}}}.

\bibitem {beard1995improving}
\btxnamefont {R.\btxfnamespacelong W. \btxlastnamefont {Beard}}\btxauthorcolon\ \btxtitlefont {Improving the Closed-Loop Performance of Nonlinear Systems}.
\newblock \btxphdthesis {}, Rensselaer Polytechnic Institute, Troy, NY, USA, 1995.

\bibitem {bea1998successive}
\btxnamefont {R.\btxfnamespacelong W. \btxlastnamefont {Beard}}\btxauthorcolon\ \btxjtitlefont {\btxifchangecase {Successive {G}alerkin approximation algorithms for nonlinear optimal and robust control}{Successive {G}alerkin approximation algorithms for nonlinear optimal and robust control}}.
\newblock \btxjournalfont {International Journal of Control}, 71(5):717--743, \btxprintmonthyear{.}{1}{1998}{long}\ifbtxprintISSN {, \mbox{\btxISSN~\btxISSNfont {1366-5820}}}.

\bibitem {deuflhard1979affine}
\btxnamefont {P. \btxlastnamefont {Deuflhard}} \btxandlong {}\ \btxnamefont {G. \btxlastnamefont {Heindl}}\btxauthorcolon\ \btxjtitlefont {\btxifchangecase {Affine invariant convergence theorems for {N}ewton’s method and extensions to related methods}{Affine Invariant Convergence Theorems for {N}ewton’s Method and Extensions to Related Methods}}.
\newblock \btxjournalfont {SIAM Journal on Numerical Analysis}, 16(1):1--10, \btxprintmonthyear{.}{2}{1979}{long}\ifbtxprintISSN {, \mbox{\btxISSN~\btxISSNfont {1095-7170}}}.

\bibitem {chen2005convergence}
\btxnamefont {J. \btxlastnamefont {Chen}} \btxandlong {}\ \btxnamefont {W. \btxlastnamefont {Li}}\btxauthorcolon\ \btxjtitlefont {\btxifchangecase {Convergence of {G}auss–{N}ewton’s method and uniqueness of the solution}{Convergence of {G}auss–{N}ewton’s method and uniqueness of the solution}}.
\newblock \btxjournalfont {Applied Mathematics and Computation}, 170(1):686--705, \btxprintmonthyear{.}{11}{2005}{long}\ifbtxprintISSN {, \mbox{\btxISSN~\btxISSNfont {0096-3003}}}.

\bibitem {ferreira2011local}
\btxnamefont {O.P. \btxlastnamefont {Ferreira}}, \btxnamefont {M.L.N. \btxlastnamefont {Gonçalves}}\btxandcomma {} \btxandlong {}\ \btxnamefont {P.R. \btxlastnamefont {Oliveira}}\btxauthorcolon\ \btxjtitlefont {\btxifchangecase {Local convergence analysis of the {G}auss–{N}ewton method under a majorant condition}{Local convergence analysis of the {G}auss–{N}ewton method under a majorant condition}}.
\newblock \btxjournalfont {Journal of Complexity}, 27(1):111--125, \btxprintmonthyear{.}{2}{2011}{long}\ifbtxprintISSN {, \mbox{\btxISSN~\btxISSNfont {0885-064X}}}.

\bibitem {steinwart2008support}
\btxnamefont {I. \btxlastnamefont {Steinwart}} \btxandlong {}\ \btxnamefont {A. \btxlastnamefont {Christmann}}\btxauthorcolon\ \btxtitlefont {Support Vector Machines}.
\newblock \btxpublisherfont {Springer New York}, 2008\ifbtxprintISBN {, \mbox{\btxISBN~\btxISBNfont {9780387772424}}}.

\bibitem {Bacchetta2025}
\btxnamefont {M. \btxlastnamefont {Bacchetta}}\btxauthorcolon\ \btxtitlefont {\btxifchangecase {Approximating solutions to non-linear partial differential equations with kernel methods}{Approximating Solutions to Non-Linear Partial Differential Equations with Kernel Methods}}.
\newblock \btxifchangecase {Master's thesis}{Master's thesis}, University of Stuttgart, \btxprintmonthyear{.}{August}{2025}{long}.

\bibitem {aronszajn1950theory}
\btxnamefont {N. \btxlastnamefont {Aronszajn}}\btxauthorcolon\ \btxjtitlefont {\btxifchangecase {Theory of reproducing kernels}{Theory of reproducing kernels}}.
\newblock \btxjournalfont {Transactions of the American Mathematical Society}, 68(3):337--404, 1950\ifbtxprintISSN {, \mbox{\btxISSN~\btxISSNfont {1088-6850}}}.

\bibitem {wenzel2025adaptive}
\btxnamefont {T. \btxlastnamefont {Wenzel}}, \btxnamefont {D. \btxlastnamefont {Winkle}}, \btxnamefont {G. \btxlastnamefont {Santin}}\btxandcomma {} \btxandlong {}\ \btxnamefont {B. \btxlastnamefont {Haasdonk}}\btxauthorcolon\ \btxjtitlefont {\btxifchangecase {Adaptive meshfree approximation for linear elliptic partial differential equations with {PDE}-greedy kernel methods}{Adaptive meshfree approximation for linear elliptic partial differential equations with {PDE}-greedy kernel methods}}.
\newblock \btxjournalfont {BIT Numerical Mathematics}, 65(1), \btxprintmonthyear{.}{1}{2025}{long}\ifbtxprintISSN {, \mbox{\btxISSN~\btxISSNfont {1572-9125}}}.

\bibitem {Vrabie2009}
\btxnamefont {D. \btxlastnamefont {Vrabie}} \btxandlong {}\ \btxnamefont {F. \btxlastnamefont {Lewis}}\btxauthorcolon\ \btxjtitlefont {\btxifchangecase {Neural network approach to continuous-time direct adaptive optimal control for partially unknown nonlinear systems}{Neural network approach to continuous-time direct adaptive optimal control for partially unknown nonlinear systems}}.
\newblock \btxjournalfont {Neural Networks}, 22(3):237--246, \btxprintmonthyear{.}{4}{2009}{long}\ifbtxprintISSN {, \mbox{\btxISSN~\btxISSNfont {0893-6080}}}.

\bibitem {kansa1990multiquadrics}
\btxnamefont {E.J. \btxlastnamefont {Kansa}}\btxauthorcolon\ \btxjtitlefont {\btxifchangecase {Multiquadrics—a scattered data approximation scheme with applications to computational fluid-dynamics—{II} solutions to parabolic, hyperbolic and elliptic partial differential equations}{Multiquadrics—A scattered data approximation scheme with applications to computational fluid-dynamics—{II} solutions to parabolic, hyperbolic and elliptic partial differential equations}}.
\newblock \btxjournalfont {Computers and Mathematics with Applications}, 19(8–9):147--161, 1990\ifbtxprintISSN {, \mbox{\btxISSN~\btxISSNfont {0898-1221}}}.

\bibitem {nocedal2006numerical}
\btxnamefont {J. \btxlastnamefont {Nocedal}} \btxandlong {}\ \btxnamefont {S.\btxfnamespacelong J. \btxlastnamefont {Wright}}\btxauthorcolon\ \btxtitlefont {Numerical optimization}.
\newblock Springer series in {O}perations {R}esearch and {F}inancial {E}ngineering. \btxpublisherfont {Springer}, New York, NY, \btxeditionnumlong {second edition}{}, 2006\ifbtxprintISBN {, \mbox{\btxISBN~\btxISBNfont {9780387400655}}}.

\bibitem {farkas2016variations}
\btxnamefont {B. \btxlastnamefont {Farkas}} \btxandlong {}\ \btxnamefont {S.\btxfnamespacelong A. \btxlastnamefont {Wegner}}\btxauthorcolon\ \btxjtitlefont {\btxifchangecase {Variations on {B}arbălat’s lemma}{Variations on {B}arbălat’s Lemma}}.
\newblock \btxjournalfont {The American Mathematical Monthly}, 123(8):825, 2016\ifbtxprintISSN {, \mbox{\btxISSN~\btxISSNfont {0002-9890}}}.

\bibitem {krantz2002primer}
\btxnamefont {S.\btxfnamespacelong G. \btxlastnamefont {Krantz}} \btxandlong {}\ \btxnamefont {H.\btxfnamespacelong R. \btxlastnamefont {Parks}}\btxauthorcolon\ \btxtitlefont {A Primer of Real Analytic Functions}.
\newblock Springer eBook Collection. \btxpublisherfont {Birkhäuser}, Boston, MA, \btxeditionnumlong {second edition}{}, 2002\ifbtxprintISBN {, \mbox{\btxISBN~\btxISBNfont {9780817681340}}}.

\bibitem {heittokangas2023value}
\btxnamefont {J. \btxlastnamefont {Heittokangas}}, \btxnamefont {K. \btxlastnamefont {Ishizaki}}, \btxnamefont {K. \btxlastnamefont {Tohge}}\btxandcomma {} \btxandlong {}\ \btxnamefont {Z.‐T. \btxlastnamefont {Wen}}\btxauthorcolon\ \btxjtitlefont {\btxifchangecase {Value distribution of exponential polynomials and their role in the theories of complex differential equations and oscillation theory}{Value distribution of exponential polynomials and their role in the theories of complex differential equations and oscillation theory}}.
\newblock \btxjournalfont {Bulletin of the London Mathematical Society}, 55(1):1--77, \btxprintmonthyear{.}{9}{2022}{long}\ifbtxprintISSN {, \mbox{\btxISSN~\btxISSNfont {1469-2120}}}.

\end{thebibliography}
\end{document}